\documentclass{article}

\usepackage{graphicx}
\usepackage{amscd,amsthm,amsmath,amssymb,enumerate}
\usepackage[mathscr]{eucal}


\newcommand{\nth}{{}^{\text{th}}}
\newcommand{\nl}{\nolimits}

\newcommand{\Z}{\mathbb{Z}}
\newcommand{\Q}{\mathbb{Q}}
\newcommand{\C}{\mathbb{C}}
\newcommand{\F}{\mathbb{F}}
\newcommand{\N}{\mathbb{N}}

\newcommand{\SL}{\text{SL}}

\newcommand{\ol}{\overline}
\newcommand{\ord}{\text{ord}}

\newcommand{\cD}{\mathcal{D}}

\newcommand{\cM}{\mathcal{M}}

\newcommand{\cX}{\mathcal{X}}

\newcommand{\sO}{\mathscr{O}}

\newcommand{\vv}{\mathbf{v}}
\newcommand{\ee}{\mathbf{e}}
\newcommand{\tU}{\widetilde{U}}
\newcommand{\tD}{\widetilde{\mathcal{D}}}
\newcommand{\bA}{\mathbf{A}}

\newcommand{\eps}{\varepsilon}

\newcommand{\dia}[1]{{\langle#1\rangle}}

\theoremstyle{plain}
\newtheorem{theorem}{Theorem}[section]

\newtheorem{proposition}[theorem]{Proposition}
\newtheorem{lemma}[theorem]{Lemma}

\newtheorem{remark}[theorem]{Remark}

\newtheorem*{theorem*}{Theorem}

\begin{document}

\title{Slopes of the $U_7$ Operator Acting on a Space of Overconvergent Modular Forms}
\author{L. J. P. Kilford and Ken McMurdy}
\maketitle

\begin{abstract}
Let $\chi$ be the primitive Dirichlet character of conductor $49$ defined by $\chi(3)=\zeta$, for $\zeta$ a primitive $42^{\text{nd}}$ root of unity. We explicitly \mbox{compute} the slopes of the $U_7$ operator acting on the space of overconvergent \mbox{modular} forms on $X_1(49)$ with weight $k$ and character either $\chi^{7k-6}$ or $\chi^{8-7k}$, \mbox{depending} on the embedding of $\Q(\zeta)$ into $\C_7$. By applying results of \mbox{Coleman,} and of Cohen-Oesterl\'e, we are then able to conclude the slopes of $U_7$ acting on all classical Hecke newforms of the same weight and \mbox{character.}
\end{abstract}

\section{Introduction}\label{Section:Intro}

Let $N$ be an arbitrary positive integer. Suppose that $f$ is a normalized cuspidal Hecke eigenform for $\Gamma_1(7N)$, whose $q$-expansion at $\infty$, $f(q)=\sum_{n=1}^{\infty}a_nq^n$, is defined over a number field $L$. Then $f$ is an eigenform for the $U_7$ operator with eigenvalue $a_7$. We define the {\em slope} of $U_7$ acting on $f$ to be the $7$-adic valuation\footnote{Here we normalize the $7$-adic valuation so that $v(7)=1$.} of $a_7$ viewed as an element of $\C_7$. From this definition it is clear that the slope depends on the embedding of $L$ into $\C_7$.

In particular, suppose now that $L$ contains the cyclotomic field $K=\Q(\zeta)$, where $\zeta$ is a fixed primitive $42^{\text{nd}}$ root of unity. This would necessarily be the case, for example, if $f$ were a newform for $\Gamma_1(49)$ with character $\chi$ defined by $\chi(3)=\zeta$. Over the degree $12$ extension, $K/\Q$, the prime ideal $(7)$ factors as $(7)=(\pi_1)^6(\pi_2)^6$, where
$$\pi_1=-\zeta^8 + \zeta^6 - \zeta^4 + \zeta \quad\text{and}\quad  \pi_2=\zeta^9 + \zeta^8 + \zeta^4 + \zeta^3 - \zeta - 1.$$
Thus there are two types of embeddings of $L$ into $\C_7$, which can be described as follows. Let $K_i=K_{(\pi_i)}$, the completion of $K$ at the prime ideal $(\pi_i)$. The image of $L$ must generate a complete subfield $\hat{L}\subseteq \C_7$ that contains either $K_1$ or $K_2$, and we say that the embedding is of Type 1 or Type 2 accordingly. This is a convenient distinction if we wish to do concrete global calculations over $K$ but draw conclusions over $\C_7$. Alternatively, note that the $42^{\text{nd}}$ cyclotomic polynomial factors over $\F_7$ as
\begin{align*}
\Phi_{42}(x)&=x^{12} + x^{11} - x^9 - x^8 + x^6 - x^4 - x^3 + x + 1\\
                     &=(x+2)^6(x+4)^6.
\end{align*}
Since $v_{(\pi_1)}(\zeta+2)=1$ (in $K$), this implies that the embedding is of Type 1 precisely when $v(\zeta+2)>0$ (in $\C_7$), and of Type 2 when $v(\zeta+4)>0$.

At this point we are able to state the main result of this paper. 

\begin{theorem}\label{Theorem:MainTheorem-Intro}
Let $k$ be an integer greater than $1$. Fix a primitive $42^{\text{nd}}$ root of unity, $\zeta$, and let $\chi$ be the Dirichlet character of conductor $49$ defined by $\chi(3)=\zeta$.

The classical space, $S_k(\Gamma_0(49),\chi^{7k-6})$, is diagonalized by $U_7$ over $K_1$. The slopes of $U_7$ acting on this space are precisely those values in the set,
$$\left\{\tfrac{1}{6}\cdot\left\lfloor\tfrac{9i}{7}\right\rfloor: i \in \N\right\},$$
which are less than $k-1$ (each corresponding to a one-dimensional eigenspace).

The classical space, $S_k(\Gamma_0(49),\chi^{8-7k})$, is diagonalized by $U_7$ over $K_2$. The slopes of $U_7$ acting on this space are precisely those values in the set,
$$\left\{\tfrac{1}{6}\cdot\left\lfloor\tfrac{9i+6}{7}\right\rfloor: i \in \N\right\},$$
which are less than $k-1$ (each corresponding to a one-dimensional eigenspace).
\end{theorem}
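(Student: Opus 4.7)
The plan is to first establish the slopes of $U_7$ on the relevant spaces of \emph{overconvergent} forms promised by the abstract, and then to extract the classical statement from this. The extraction relies on two standard ingredients: Coleman's classicality theorem, which guarantees that every overconvergent eigenform of weight $k$ and slope strictly less than $k-1$ is classical (and hence explains the cut-off ``less than $k-1$'' in the conclusion), and the Cohen-Oesterl\'e dimension formula, which is used to compute $\dim S_k(\Gamma_0(49),\chi^{7k-6})$ and $\dim S_k(\Gamma_0(49),\chi^{8-7k})$. Matching these dimensions against the number of elements of the two explicit slope sets that lie below $k-1$ will simultaneously show that no classical slopes are missed and that, after grouping by slope, each eigenspace is one-dimensional. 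Diagonalizability over $K_1$ or $K_2$ is then automatic from the fact that the listed slopes are pairwise distinct.

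For the overconvergent computation I would follow a Buzzard-Kilford style strategy. First, fix an explicit rigid-analytic model of $X_1(49)$ over an appropriate subfield of $\C_7$, with a uniformizer adapted to the two supersingular residue disks visible from the factorization $\Phi_{42}(x)\equiv(x+2)^6(x+4)^6\pmod 7$. For each fixed weight $k$ and each of the two characters $\chi^{7k-6}$ and $\chi^{8-7k}$, construct a power-series basis $\{e_n\}_{n\ge 0}$ of overconvergent forms on a suitable strict neighbourhood of the ordinary locus. The key analytic task is to arrange this basis so that the matrix of $U_7$ is upper triangular modulo strictly smaller error, with the $(n,n)$-entry having $7$-adic valuation exactly $\tfrac{1}{6}\lfloor 9n/7\rfloor$ in the Type~1 case and $\tfrac{1}{6}\lfloor(9n+6)/7\rfloor$ in the Type~2 case. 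Once this normal form is in place, the Newton polygon of the characteristic power series of $U_7$ may be read off the diagonal, and it coincides precisely with the slope lists in the theorem.

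The appearance of $\chi^{7k-6}$ and $\chi^{8-7k}$, rather than $\chi^k$ or $\chi$ itself, is forced by the twist required to identify a classical form of weight $k$ and character $\chi^j$ with an overconvergent form on the chosen strict neighbourhood of one or the other supersingular disk. Over $K_1$, where $\zeta$ reduces to $-2$, tracking the action of the diamond operators through the $k^{\text{th}}$ power of a local uniformizing parameter forces $j\equiv 7k-6\pmod{42}$; the analogous calculation at $(\pi_2)$, where $\zeta$ reduces to $-4$, produces $j\equiv 8-7k\pmod{42}$. This is why the two types of embedding described in the introduction pair off with precisely the two character conditions in the statement.

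I expect the main obstacle to be the overconvergent step itself, namely constructing the basis and proving estimates on the off-diagonal entries of $U_7$ that are sharp enough to force the Newton polygon to agree with the diagonal. This requires both a concrete global choice of model of $X_1(49)$ (together with explicit equations for the correspondence attached to $U_7$) and a careful local analysis of the behaviour of $U_7$ near each of the two supersingular points. Once those estimates are established, the passage to the classical statement is a direct application of the Coleman and Cohen-Oesterl\'e inputs above, with diagonalizability following immediately from the distinctness of the listed slopes.
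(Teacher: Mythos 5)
The final step of your argument — Coleman's classicality theorem plus a dimension count via Cohen--Oesterl\'e, with diagonalizability following from distinctness of slopes — matches the paper's Theorem \ref{Theorem:MainTheorem-Classical} exactly. The real content, however, is the overconvergent slope computation, and there your sketch both diverges from the paper and glosses over the obstacle that the paper identifies as its main contribution.

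Concretely, there are several issues. First, you propose working directly with a model of $X_1(49)$, but that curve has large genus; the paper instead passes to $X_0(49)$ (genus $1$) by twisting by a weight-$1$ Eisenstein series $E_{1,\chi}$ and an auxiliary $E_{1,\tau}$, converting the problem to a ``twisted'' $U_7$ acting on \emph{functions} on a wide open disk $\tD\subseteq X_0(49)$ (Section \ref{Section:TwistedU7}). Second, you propose a Buzzard--Kilford normal form in which the matrix of $U_7$ is upper triangular up to smaller error, with the slopes read off the diagonal. The paper explicitly emphasizes that this kind of argument had only been carried out before over genus-$0$ curves, and that they do \emph{not} obtain a triangular normal form. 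Instead they compute the valuations of the characteristic-series coefficients $c_j$ by showing $\det M_j$ dominates the sum of principal $j\times j$ minors, using reductions of column functions to the good-reduction model $\ol{\cX}$ of $X_0(49)$ and a Riemann--Roch / non-rationality argument (proof of Theorem \ref{Theorem:MainTheorem-Weight1}) to establish independence of the reduced column functions. Asserting upper triangularity is a genuine gap: you would need to say why such a basis exists on a positive-genus curve, and the paper's careful workaround indicates this is not routine. Third, your reading of the factorization $\Phi_{42}\equiv(x+2)^6(x+4)^6\pmod 7$ as identifying ``two supersingular residue disks'' conflates the prime decomposition of $7$ in $\Q(\zeta_{42})$ (which governs the two types of embedding into $\C_7$) with supersingular geometry on the modular curve; these are unrelated. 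Fourth, your claim that the diamond operators ``force'' $j\equiv 7k-6$ (resp.\ $8-7k$) is not the paper's mechanism: those characters arise simply as $\chi\tau^{k-1}$ for the two choices $\tau=\chi^{\pm 7}$, and the choice of $\tau$ in each case is dictated by requiring the auxiliary Eisenstein series $E_{1,\tau}$ to be non-vanishing on $W_1(49)$ for the given embedding (proof of Theorem \ref{Theorem:MainTheorem-Overconvergent}), not by any constraint intrinsic to the diamond action.

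In short, the reduction to the overconvergent problem and its resolution via Coleman and Cohen--Oesterl\'e agree with the paper, but the heart of your proposal — the overconvergent slope computation — rests on an unjustified triangularity claim that the paper's authors did not (and apparently could not straightforwardly) establish, and it misses the key technical innovation here: lifting to, and reducing on, the stable model of $X_0(49)$, together with a Riemann--Roch independence argument that replaces the usual compact-operator-on-an-affinoid machinery.
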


Our general approach follows what has become the standard line of attack for slope questions such as these (see \cite[\S 2]{kilford-5slopes} for a survey of related past work). We view the classical forms as a subspace of the overconvergent forms on $X_1(49)$ with the same weight and character. These are defined as sections over a certain rigid-analytic subspace of the modular curve as in \cite{coleman-overconvergent} (see Section \ref{Section:OverconvergentForms}). Using an Eisenstein series, we pull back the overconvergent forms with weight and character to overconvergent forms of weight $0$ on $X_0(49)$, on which a ``twisted'' $U_7$ operator acts with the same eigenvalues (see Section \ref{Section:TwistedU7}). Then, by choosing a ``basis'' for these overconvergent forms (which are really just holomorphic functions on a wide-open disk), the twisted $U_7$ operator can essentially be viewed as an infinite matrix whose characteristic series can be computed explicitly. The bulk of this work is done in Section \ref{Section:ExplicitFormulas}. Finally, the coefficients of the characteristic series give the $U_7$ slopes of all {\em overconvergent} forms with the given weight and character, and then we are able to apply well-known results of Coleman and Cohen-Oesterl\'e to determine which of these forms must have been classical.

There are a couple of important ways, however, in which our work is different than any previous. First of all, analogous explicit slope calculations have only previously been done over genus $0$ modular curves. For example, the work of \cite{kilford-5slopes} is set over $X_0(25)$. Similarly, in \cite{loeffler}, Loeffler focuses primarily on $X_0(p)$ where $p=2$, $3$, $5$, $7$, and $13$. Genus $0$ certainly simplifies the process of describing the matrix representing $U_p$. By working over $X_0(49)$, though, we show that this condition is by no means necessary. A second important distinction in our work is that we do not ultimately restrict our overconvergent forms to an affinoid subdomain in order to apply Serre's theory of compact operators. Instead, we view the wide open disk over which the forms are defined as a residue disk in the stable model for the genus $1$ curve $X_0(49)$. This enables us to ``lift and reduce'' overconvergent forms to meromorphic functions on the good reduction, which makes it possible to argue independence via Riemann-Roch in the proofs of Theorems \ref{Theorem:MainTheorem-Weight1} and \ref{Theorem:MainTheorem-Overconvergent}. Thus, the stable reduction of the modular curve plays a key role in our proof, which may offer a new line of attack for more specific cases or even the general case.

In Section \ref{Section:Verify}, we were able to independently verify our theorem in the weight $2$ case using some very useful data which we found on William Stein's Modular Forms Explorer website. In addition to this acknowledgment, we would also like to express our appreciation for the open source computational software package, SAGE \cite{sage}, which was used for all of our explicit calculations. The files for all of these calculations are available on the second author's website.

\section{Explicit Models}\label{Section:Models}

We will need explicit equations for the modular curves $X_0(7)$ and $X_0(49)$, as well as the moduli-theoretic maps between them and the $j$-line. These can be imported directly from \cite[\S2]{mcmurdy}, but we repeat them here for the convenience of the reader.

For $X_0(7)$, which has genus $0$, we may choose as a parameter the eta quotient $t=(\eta_1/\eta_7)^4$. Like all eta quotients, the divisor of $t$ is supported on the cusps, and in this case given by $(t)=(0)-(\infty)$. Let $\pi_1:X_0(7)\to X(1)$ be the so-called ``forgetful'' map which fixes $q$-expansions at infinity, and let $\pi_7:X_0(7)\to X(1)$ be the map for which $\pi_7^*F(q)=F(q^7)$. Then we have
\begin{align} \label{Eq:Pi1a}
\pi_1^*(j)&=\frac{(t^2+13t+49)(t^2+245t+2401)^3}{t^7}\\ \label{Eq:Pi1b}
                &=1728+\frac{(t^4-10\cdot 7^2t^3-9\cdot 7^4 t^2-2\cdot 7^6t-7^7)^2}{t^7}\\
\pi_7^*(j)&=\frac{(t^2+13t+49)(t^2+5t+1)^3}{t}\\
                &=1728+\frac{(t^4+14t^3+63t^2+70t-7)^2}{t}.
\end{align}
The Atkin-Lehner involution on $X_0(7)$ is also given by $w_7^*t=49/t$. 

From Equation \ref{Eq:Pi1b}, and the fact that $j=1728$ is the only supersingular $j$-invariant (mod $7$), we see that the unique supersingular annulus is the region where $0<v_7(t)<2$. From Equation \ref{Eq:Pi1a}, we see that $X_0(7)$ has two elliptic points of order $3$, defined by $t^2+13t+49$. From the Newton polygon of this quadratic, we see that the $t$-coordinates of the two elliptic points have $7$-adic valuation $0$ and $2$. Thus they lie in the ordinary locus, with one on either ``side'' of the supersingular annulus (see \cite[Fig. 1]{mcmurdy} for a picture). For consistency, we will always denote these elliptic points as $e_1$ and $e_2$, where $v(t(e_1))=0$ and $v(t(e_2))=2$. This is an important point for us, particularly since the elliptic points occur in the support of the Eisenstein series which we use to pass between overconvergent forms of different weight and character (see Proposition \ref{Prop:Psi-Iso}). 

For the genus $1$ modular curve, $X_0(49)$, we may choose as parameters the two eta quotients, $x=\eta_1/\eta_{49}$ and $y=(\eta_7/\eta_{49})^4$. These are also supported on the cusps and have the following divisors:
$$(x)=2(0)-2(\infty),\qquad (y)=(0)+{\sum_{i=1}^6(C_{7,i})}-7(\infty).$$
Here, as in \cite[\S2]{mcmurdy}, we use $C_{7,i}$ to represent those cusps whose underlying generalized elliptic curve is the N\'{e}ron $7$-gon.
The equation for $X_0(49)$ in terms of these parameters is given by
\begin{equation}\label{Eq:X49model}
y^2-7xy(x^2+5x+7)-x(x^6+7x^5+21x^4+49x^3+147x^2+343x+343)=0.
\end{equation}
Defining $\pi_1,\pi_7:X_0(49)\to X_0(7)$ as above, we clearly have $\pi_7^*t=y$. From \cite[\S2]{mcmurdy} we also have $\pi_1^*t=x^4/y$ and $w_{49}^*(x,y)=(7/x,49y/x^4)$.
This curve also has two elliptic points, $\hat{e}_1$ and $\hat{e}_2$, which lie over $e_1$ and $e_2$ via either map. The fibers over $e_1$ and $e_2$ figure prominently in our work, and thus are described in great detail in Lemma \ref{Lemma:EllipticPoints}.

At times, it will be useful to have a Weierstrass equation for $X_0(49)$, and in this case we take
$$z=\frac{y-\tfrac{7}{2}x(x^2+5x+7)}{x^2+7x+7}.$$
This results in the equation
\begin{equation}\label{Eq:WeierstrassModel}
z^2=x(x^2+\tfrac{21}{4}x+7).
\end{equation}
Moreover, a good reduction model $\cX$ for $X_0(49)$ exists over any Galois extension of $\Q_7$ containing a root $\alpha$ of $x^4+7$. In particular, if we let $z=\alpha^3Z$ and $x=\alpha^2 X$, we obtain the equation
\begin{equation}\label{Eq:X49Reduction}
Z^2=X(X^2-1)\pmod{\alpha^2}.
\end{equation}

\section{Eisenstein Series}\label{Section:Eisenstein}

In order to translate forms with character to forms on $X_0(49)$, we will use various Eisenstein series on $X_1(49)$. In this section, we define these Eisenstein series using the well-known $q$-expansion formula (see \cite[\S2.2]{diamond-im}, for example), and compute their divisors using Shimura's theory of divisors \cite[\S2.4]{shimura}. This enables us to avoid holomorphicity issues when dividing by these forms. In all cases, we use $B_{k,\eps}$ to represent the generalized Bernoulli number for weight $k$ and character $\eps$ (as defined in \cite[\S 2.2]{diamond-im}).

\begin{proposition}\label{Prop:E1tau}
Let $\tau$ be an odd character of conductor $7$, defined by $\tau(3)=\beta$ for $\beta$ some primitive $6\nth$ root of unity. Let $E_{1,\tau}$ be the weight $1$ Eisenstein series on $X_1(7)$ defined by
$$E_{1,\tau}(q)=1-\frac{2}{B_{1,\tau}}\sum_{n=1}^\infty{\Big(\sum_{d|n}\tau(d)\Big)}q^n.$$
The divisor of $E_{1,\tau}^6$, considered as a modular form on $X_0(7)$, is $4(e_{\beta})$, where $e_{\beta}\in X_0(7)$ is the elliptic point with $t(e_{\beta})=3\beta-8$.
\end{proposition}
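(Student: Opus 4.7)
Since $\tau^6$ is trivial, $E_{1,\tau}^6$ descends to a modular form of weight $6$ on $\Gamma_0(7)$, and my plan is to compute its divisor on the coarse moduli curve $X_0(7)$ in three steps: bound the total degree via the valence formula, rule out contributions at the cusps, and then use a character-theoretic analysis at the two elliptic points of order $3$ to concentrate the remaining degree at a single point $e_\beta$.

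The valence formula for $\Gamma_0(7)$ gives total weighted degree $6\cdot [\overline{\SL_2(\Z)}:\overline{\Gamma_0(7)}]/12 = 4$, with contributions at the elliptic points $e_1, e_2$ counted with weight $1/3$ via the upper-half-plane order. The constant term of $E_{1,\tau}$ at $\infty$ is $1$, so $\nu_\infty(E_{1,\tau}^6)=0$. For the cusp $0$, I apply the Atkin-Lehner operator $w_7$: it normalizes $\Gamma_1(7)$ and conjugates $\tau$ to $\tau^{-1}$, so $E_{1,\tau}|_1 w_7$ lies in the one-dimensional Eisenstein subspace of $M_1(\Gamma_1(7),\tau^{-1})$ and is therefore a nonzero scalar multiple of $E_{1,\tau^{-1}}$, which has constant term $1$. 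Hence $E_{1,\tau}$ is also nonzero at the cusp $0$, and the entire degree $4$ must be carried at the elliptic points.

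At an elliptic point $e$, let $\gamma_e = \left(\begin{smallmatrix} a & b \\ 7c & d \end{smallmatrix}\right)$ generate the $\SL_2$-stabilizer (which has order $6$), and set $\lambda_e := (7ce+d)$, a primitive $6^{\text{th}}$ root of unity. Comparing leading Taylor coefficients on both sides of $E_{1,\tau}|_1\gamma_e = \tau(d)\,E_{1,\tau}$, using the derivative formula for the M\"obius action at the fixed point, forces the upper-half-plane vanishing order $n_e$ of $E_{1,\tau}$ at $e$ to satisfy
$$\tau(d)\,\lambda_e^{1+2n_e} = 1,$$
determining $n_e \bmod 3$ in terms of $\tau(d)$. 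The Atkin-Lehner involution swaps the two elliptic points and conjugates stabilizers in a way that preserves $\lambda_e$ but replaces $\tau(d)$ by $\tau(d)^{-1}$; together with the valence constraint $n_{e_1}+n_{e_2}=2$ this forces $(n_{e_1},n_{e_2}) \in \{(2,0),(0,2)\}$, the degenerate case $(1,1)$ being ruled out whenever $\tau(d)\neq -1$ at either elliptic point (verified below by explicit choice of stabilizer).

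To identify the vanishing point as $e_\beta$: direct substitution (using $\beta+\beta^{-1}=1$) shows that $3\beta-8$ and $3\beta^{-1}-8$ are the two roots of $t^2+13t+49$. Taking an explicit order-$6$ stabilizer such as $\left(\begin{smallmatrix} -2 & 1 \\ -7 & 3 \end{smallmatrix}\right)$, solving the vanishing equation with $\tau(3)=\beta$, and translating the associated upper-half-plane fixed point to a $t$-coordinate via the $\eta$-quotient pins down the vanishing at the elliptic point with $t=3\beta-8$. Raising to the sixth power then yields upper-half-plane order $12$ there, which corresponds to order $12/3=4$ on the coarse moduli curve, establishing the divisor $4(e_\beta)$. \textbf{The main obstacle} is this last identification step, which requires careful tracking of branches for the $\eta$-quotient in computing $t$ at the fixed point, and careful alignment of the choice of primitive $6^{\text{th}}$ root $\beta$ with the moduli data at the elliptic point; aside from this, the proof is a fairly mechanical application of Shimura's theory of divisors together with the valence and Atkin-Lehner computations above.
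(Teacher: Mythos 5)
Your approach is genuinely different from the paper's. The paper treats the problem purely computationally: it takes $F$ the weight-$2$ form corresponding to $-dt$, observes that $g:=E_{1,\tau}^6/F^3$ is a rational function on $X_0(7)$ lying in $L(4(\infty))$ (after clearing denominators), and then identifies it exactly by comparing $q$-expansions against the explicit basis $\{1,t,t^2,t^3,t^4\}$. The full divisor, including which elliptic point carries the zero, drops out automatically from the resulting identity $t^3(t^2+13t+49)^2\,g=(t-(3\beta-8))^4$. Your approach instead uses the valence formula, Atkin--Lehner symmetry, and the leading-coefficient automorphy relation at the elliptic-point stabilizers. That is a legitimate and conceptually attractive route, and your individual ingredients check out: the index computation gives total degree $4$, the constant term and the $w_7$-twist rule out the cusps, the relation $\tau(d)\lambda_e^{1+2n_e}=1$ is correct, and $w_7$ does conjugate $\begin{pmatrix}a&b\\7c&d\end{pmatrix}$ to $\begin{pmatrix}d&-c\\-7b&a\end{pmatrix}$, replacing $\tau(d)$ by $\tau(d)^{-1}$ while keeping the same $\lambda_e$.

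Two points need attention, however. First, a logical gap: from ``$E_{1,\tau}$ is nonzero at both cusps'' you cannot conclude that the entire degree is carried at the elliptic points, and therefore ``$n_{e_1}+n_{e_2}=2$'' is not a consequence of the valence formula alone; a weight-one form could a priori vanish at ordinary interior points. The valence formula plus cusp nonvanishing gives only $n_{e_1}+n_{e_2}\le 2$. What saves you is precisely the automorphy analysis you set up in the \emph{next} step: multiplying the two constraints $\tau(d)\lambda^{1+2n_{e_1}}=1$ and $\tau(d)^{-1}\lambda^{1+2n_{e_2}}=1$ forces $n_{e_1}+n_{e_2}\equiv 2\pmod 3$, and combined with the upper bound this yields $n_{e_1}+n_{e_2}=2$ exactly, which retroactively rules out other zeros. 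So the conclusion is right, but the order of deduction in your write-up should be reversed. Second, and more substantively, the last step --- identifying which of the two roots $3\beta-8$, $3\beta^{-1}-8$ of $t^2+13t+49$ is the one where $E_{1,\tau}$ actually vanishes --- is exactly where the real content of the proposition lies, and you acknowledge you have not carried it out. It requires evaluating the $\eta$-quotient $t$ at the explicit fixed point $(5+i\sqrt{3})/14$ and aligning the branch of $\beta=\tau(3)$ with the moduli data, which is not routine. This is precisely what the paper's $q$-expansion comparison makes automatic: once $g$ is pinned down inside the $5$-dimensional space $L(4(\infty))$, the factor $(t-(3\beta-8))^4$ appears with the correct sign of $\beta$ with no further work. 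You should either complete that identification or acknowledge that your proof only determines the divisor up to the $\beta\leftrightarrow\beta^{-1}$ ambiguity.
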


\begin{proof}
Let $F$ be the weight $2$ meromorphic form on $X_0(7)$ which corresponds to the differential $-dt$ by the well-known correspondence between weight $2$ forms and differentials. Then since $(dt)=-2(\infty)$, we may apply \cite[Prop. 2.16]{shimura} to see that the divisor of $F$ as a modular form is given by
$$(F)=(0)-(\infty)+\tfrac{2}{3}(e_1)+\tfrac{2}{3}(e_2).$$
Therefore, since the Eisenstein series is holomorphic, $g:=E_{1,\tau}^6/F^3$ must be a function on $X_0(7)$ whose divisor satisfies $(g)\geq 3(\infty)-3(0)-2(e_1)-2(e_2)$. Comparing $q$-expansions of functions in $L(4(\infty))$, which is finite dimensional and spanned by $\{1,t,t^2,t^3,t^4\}$, we find that
$$t^3(t^2+13t+49)^2\cdot g=(t-(3\beta-8))^4.$$
\end{proof}

\begin{lemma}\label{Lemma:E7tau}
Let $\tau$ be as above. Let $E_{7,\tau}$ be the weight $7$ Eisenstein series on $X_1(7)$ defined by
$$E_{7,\tau}(q)=1-\frac{14}{B_{7,\tau}}\sum_{n=1}^\infty{\Big(\sum_{d|n}\tau(d)d^6\Big)}q^n.$$
The divisor of $E_{7,\tau}^6$, considered as a modular form on $X_0(7)$, is given by $(E_{7,\tau}^6)=4(e_{\beta})+6(0)+6(P_1)+6(P_2)+6(P_3)$, where $e_{\beta}$ is as above and the $t$-coordinates of the $P_i$ satisfy
\begin{multline*}
P(t)=16346149t^3 + (32722347\beta + 179781490)t^2\\
+ (178382295\beta + 587942474)t + (141531747\beta + 388829945)=0.
\end{multline*}
\end{lemma}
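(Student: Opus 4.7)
The plan is to mirror the argument of Proposition~\ref{Prop:E1tau}, now applied to the weight-$42$ holomorphic form $E_{7,\tau}^6$. Let $F$ be the same weight-$2$ meromorphic form on $X_0(7)$ corresponding to $-dt$, with $(F) = (0)-(\infty)+\tfrac{2}{3}(e_1)+\tfrac{2}{3}(e_2)$, and set $g := E_{7,\tau}^6/F^{21}$. Since $E_{7,\tau}^6$ is holomorphic, $g$ is a rational function on $X_0(7)\cong\mathbb{P}^1$ whose divisor is bounded below by
$$(g) \;\geq\; -21(F) \;=\; 21(\infty)-21(0)-14(e_1)-14(e_2).$$

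Because $(e_1)$ and $(e_2)$ are cut out by $t^2+13t+49$, multiplying through by $t^{21}(t^2+13t+49)^{14}$, which has divisor $21(0)+14(e_1)+14(e_2)-49(\infty)$, shows that
$$H(t) \;:=\; t^{21}(t^2+13t+49)^{14}\cdot g$$
lies in $L(28(\infty))$ and is therefore a polynomial in $t$ of degree at most $28$. This space is $29$-dimensional, so $H$ is determined by the first $29$ coefficients of its $q$-expansion. All the relevant $q$-expansions are explicit: $t=(\eta_1/\eta_7)^4$ has leading term $q^{-1}$, the weight-$2$ form $F$ is recovered from $-dt$ via the standard correspondence between weight-$2$ forms and differentials, and $E_{7,\tau}$ is given by its defining series.

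Once $H(t)$ is computed, factoring it over $K=\Q(\beta)$ reads off the divisor of $g$, and hence of $E_{7,\tau}^6$ after adding back $21(F)$. The lemma is the assertion that the factorization takes the shape
$$H(t) \;=\; c\cdot t^6\cdot\bigl(t-(3\beta-8)\bigr)^4\cdot P(t)^6,$$
where $P(t)$ is the displayed cubic and $c$ is a nonzero constant; the degree count $6+4+18=28$ matches the upper bound on $\deg H$. The factor of $t^6$ records a zero of $E_{7,\tau}^6$ of order $6$ at the cusp $0$, the factor $(t-(3\beta-8))^4$ gives the order-$4$ zero at $e_\beta$ (the same elliptic point encountered at weight~$1$), and $P(t)^6$ accounts for the three new zeros $P_1,P_2,P_3$. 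When $21(F)$ is added back, the contributions at $0$, $e_1$, $e_2$, and $\infty$ cancel exactly as in Proposition~\ref{Prop:E1tau}, leaving the claimed divisor $4(e_\beta)+6(0)+6(P_1)+6(P_2)+6(P_3)$.

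The main obstacle is the explicit computational step: producing enough $q$-expansion coefficients to pin $H$ down in $L(28(\infty))$, and then recognizing the resulting polynomial as the product above with its rather unappetizing coefficients. A minor but necessary side check is that $t$, $t-(3\beta-8)$, $t^2+13t+49$, and the cubic $P(t)$ are pairwise coprime, so that the zero orders recorded by $H$ correspond to zeros at pairwise-distinct points of $X_0(7)$; this is a short discriminant/resultant verification. Both steps fit comfortably within the SAGE infrastructure already referenced in the introduction.
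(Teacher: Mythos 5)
Your proposal is exactly the paper's proof, unpacked in more detail: form $E_{7,\tau}^6\,t^{21}(t^2+13t+49)^{14}/F^{21}$, observe it lies in $L(28(\infty))$, and identify it by comparing $q$-expansions with $t^6\,(t-(3\beta-8))^4\,\bigl(P(t)/16346149\bigr)^6$. One small slip in your side-check: $t-(3\beta-8)$ is by definition a linear factor of $t^2+13t+49$, so those two are \emph{not} coprime; what you actually need is that $P(t)$ is coprime to $t$ and to $t^2+13t+49$, which is the short resultant verification you mention.
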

\begin{proof}
Take $F$ as above, and compare $q$-expansions to see that the following two functions in $L(28(\infty))$ are equal.
$$\frac{E_{7,\tau}^6t^{21}(t^2+13t+49)^{14}}{F^{21}}=(t-(3\beta-8))^4t^6\left(\frac{P(t)}{16346149}\right)^6$$
\end{proof}

\begin{proposition}\label{Prop:E1chi}
Let $\chi$ be an odd, primitive Dirichlet character of conductor $49$, defined by $\chi(3)=\zeta$ where $\zeta$ is a primitive $42^{\text{nd}}$ root of unity. Let $E_{1,\chi}$ be the weight $1$ Eisenstein series on $X_1(49)$ defined by
$$E_{1,\chi}=1-\frac{2}{B_{1,\chi}}\sum_{n=1}^{\infty}{\Big(\sum_{d|n}\chi(d)\Big)}q^n.$$
Let $\hat{e}_{\zeta}$ be the elliptic point of $X_0(49)$ with $x(\hat{e}_{\zeta})=3\zeta^7-1$.
The divisor of $E_{1,\chi}^{42}$, as a modular form on $X_0(49)$, is given by $(E_{1,\chi}^{42})=28(\hat{e}_{\zeta})+42(Q)+6\sum_{i=1}^6{i(C_{7,i})}$ (with correct ordering of the $C_{7,i}$), where 
\begin{multline*}
1849\cdot x(Q)=-2040\zeta^{11} - 2342\zeta^{10} + 266\zeta^9 + 3903\zeta^8 + 883\zeta^7\\
- 2873\zeta^6 - 3359\zeta^5 + 2840\zeta^4 + 2968\zeta^3 + 1515\zeta^2 - 3229\zeta - 5616.
\end{multline*}
\end{proposition}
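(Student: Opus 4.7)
The strategy parallels that of Proposition~\ref{Prop:E1tau}, but adapted to the genus-one modular curve $X_0(49)$ using the Weierstrass model~\eqref{Eq:WeierstrassModel}. First, I would choose a weight-$2$ meromorphic modular form $F$ on $X_0(49)$ with a simple divisor; the cleanest choice is the form corresponding to the invariant differential $dx/(2z)$, which is holomorphic and nowhere vanishing on the elliptic curve $X_0(49)$, so that Shimura's Proposition~2.16 yields
\[
(F) = \sum_{c} (c) + \tfrac{2}{3}(\hat e_1) + \tfrac{2}{3}(\hat e_2),
\]
the sum being over the eight cusps. (Equivalently, $F$ is, up to scalar, the unique weight-$2$ newform on $\Gamma_0(49)$.)

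Next, form the rational function $g := E_{1,\chi}^{42}/F^{21}$ on $X_0(49)$. Holomorphicity of $E_{1,\chi}^{42}$ yields
\[
(g) \geq -21\sum_{c}(c) - 14(\hat e_1) - 14(\hat e_2).
\]
Multiplying $g$ by $y^{21}$ absorbs the potential poles at $0$ and at each $C_{7,i}$, and multiplying further by $\ell(x,z)^{14}$ absorbs the potential poles at $\hat e_1$ and $\hat e_2$, where $\ell$ is the linear function cutting out the line through these two elliptic points in the Weierstrass model (whose $x$-coordinates are the two roots $3\zeta^{\pm 7}-1$ of $x^2-x+7$). At the cost of introducing a $14$-fold zero at the third intersection $R = -\hat e_1 - \hat e_2$ of this line with the cubic, the product $g\cdot y^{21}\ell^{14}$ lies in $L(210\cdot(\infty))$, the $210$-dimensional Riemann--Roch space on the elliptic curve $X_0(49)$, spanned by $\{x^a z^b : 2a+3b \leq 210,\ b\in\{0,1\}\}$.

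The third step is to compute $q$-expansions of $E_{1,\chi}$, $F$, $x$, and $z$ at $\infty$ in Sage to sufficient precision, solve the resulting linear system to identify $g\cdot y^{21}\ell^{14}$ as an explicit polynomial in $x$ and $z$, and then factor this polynomial over $\Q(\zeta)$. After cancelling the known contributions of $y^{21}\ell^{14}$, one reads off $(g)$ and combines with $(F^{21}) = 21\sum_c(c) + 14(\hat e_1) + 14(\hat e_2)$ to recover $(E_{1,\chi}^{42})$: the factor giving $28(\hat e_\zeta)$, a sextic factor yielding the $6i\,(C_{7,i})$ contributions in the claimed ordering, and a single remaining linear factor (in $x$) identifying the mystery point $Q$ with the given $x$-coordinate should all emerge from the factorization.

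The main obstacle is the Sage calculation itself: one must compute enough $q$-expansion coefficients to pin down an element of a $210$-dimensional Riemann--Roch space and then factor a polynomial of comparable degree over the cyclotomic field $\Q(\zeta)$, verifying in particular that the vanishing orders at the six $C_{7,i}$ fall into the arithmetic progression $6,12,18,24,30,36$ under the claimed ordering.
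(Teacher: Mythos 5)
Your proposal is logically sound and would ultimately work, but it takes a genuinely different route from the paper. The paper does not divide $E_{1,\chi}^{42}$ by a power of the weight-$2$ cusp form; instead it sets $\beta=\zeta^7$ so that $\chi^7=\tau$, and uses the \emph{weight-$7$} Eisenstein series $E_{7,\tau}$ from Lemma~\ref{Lemma:E7tau} (pulled back from $X_0(7)$ via $\pi_1$) as the denominator. Because $E_{1,\chi}^7$ and $E_{7,\tau}$ share both weight and nebentypus $\tau$, the quotient $g=E_{1,\chi}^7/E_{7,\tau}$ is already a function on $X_0(49)$, and after clearing the finitely many poles one lands in $L(36(\infty))$, a $36$-dimensional space. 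The paper then recovers $(E_{1,\chi}^{42})$ from $6(g)+\pi_1^*(E_{7,\tau}^6)$. Your approach, by contrast, pairs $E_{1,\chi}^{42}$ against $F^{21}$ (trivial nebentypus), which forces you to work with the full $42$nd power and lands you in $L(210(\infty))$ after clearing poles with $y^{21}\ell^{14}$. That computation is roughly six times larger at every stage: you need about $210$ $q$-expansion coefficients over $\Q(\zeta)$ and must factor a polynomial of degree about $210$ over the cyclotomic field, versus the paper's dimension-$36$ system built on top of the already-established Lemma~\ref{Lemma:E7tau} (itself a dimension-$28$ calculation on the genus-$0$ curve $X_0(7)$). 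What your route buys is conceptual economy — it avoids the auxiliary level-$7$ Eisenstein series and the ``bootstrap'' through Proposition~\ref{Prop:E1tau} and Lemma~\ref{Lemma:E7tau} entirely — at the cost of a substantially heavier (though still feasible in Sage) explicit calculation. Your setup details are all correct: the $x$-coordinates of $\hat e_1,\hat e_2$ are the roots of $x^2-x+7$, the divisor $(F^{21})=21\sum_c(c)+14(\hat e_1)+14(\hat e_2)$ has degree $196$ as required, and after multiplying by $y^{21}\ell^{14}$ the vanishing orders at the $C_{7,i}$ come out to $6i$ as they should. One small caveat: the final factorization is not quite a one-variable polynomial factorization, since the target function lives in $L(210(\infty))$ and is written $f_1(x)+zf_2(x)$; as in the paper's proof, you must eliminate $z$ against the Weierstrass relation and then recover the $z$-coordinate of each zero to decide, e.g., whether the elliptic-point zero sits at $\hat e_\zeta$ rather than its conjugate $-\hat e_\zeta$.
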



\begin{proof}
Take $\beta=\zeta^7$, so that $\beta$ is a primitive $6\nth$ root of unity and we have $\chi^7=\tau$ (with $\tau$ as in Lemma \ref{Lemma:E7tau}). So by Lemma \ref{Lemma:E7tau}, $g:=E_{1,\chi}^7/E_{7,\tau}$ can be viewed as a function on $X_0(49)$ whose divisor satisfies 
$$6(g)\geq\pi_1^*(-4(e_{\beta})-6(0)-6(P_1)-6(P_2)-6(P_3))$$
(where $e_{\beta}$ is the elliptic point on $X_0(7)$ with $t(e_{\beta})=3\beta-8$). Taking into account that $\pi_1^*(0)=7(0)$ while $\pi_1^*(\infty)=(\infty)+\sum_{i=1}^6(C_{7,i})$, we see that
$$h:=g\cdot (t-(3\beta-8))P(t) y^4x^2\in L(36(\infty)).$$
Therefore, as this space is finite dimensional and spanned by $\{1,x,z,x^2,xz,\dots,x^{18}\}$, we may compare $q$-expansions to write $h=f_1(x)+zf_2(x)$ for polynomials $f_i(x)$ over the cyclotomic field.

The divisor of $E_{1,\chi}$ will now follow if we can compute the divisor of $h$. So we first substitute $z=-f_1(x)/f_2(x)$ into Equation \ref{Eq:WeierstrassModel} to determine the $x$-coordinates of the zeroes of $h$, and then plug back in to get $z$ (and subsequently $y$). Thus we see that
$$(h)=(0)+{\sum_{i=1}^6i(C_{7,i})} + 7(Q) + 5(\hat{e}_{\zeta})+(a_{\beta})+(b_{\beta})-36(\infty),$$
where $\pi_1^*(e_{\beta})=(\hat{e}_{\zeta})+3(a_{\beta})+3(b_{\beta})$ (see Lemma \ref{Lemma:EllipticPoints} for more explanation). 
In conclusion, we have
\begin{align*}
(E_{1,\chi}^{42})&=6(g)+\pi_1^*(E_{7,\tau}^6)\\
			&=6(h)-6\pi_1^*(t-(3\beta-8))-6\pi_1^*(P(t))-24(y)-12(x)+\pi_1^*(E_{7,\tau}^6)\\
			&=28(\hat{e}_{\zeta})+42(Q)+6\sum_{i=1}^6{i(C_{7,i})}.
\end{align*}
\end{proof}

\section{Overconvergent Modular Forms}\label{Section:OverconvergentForms}

In order to draw conclusions about slopes of classical modular forms, it is imperative that we be able to apply the main theorem from \cite{coleman-overconvergent} which can be rephrased as follows.

\begin{theorem}[\cite{coleman-overconvergent},Theorem 1.1]
Every $p$-adic overconvergent form of weight $k$ and level $p^n$ with slope strictly less than $k-1$ is classical.
\end{theorem}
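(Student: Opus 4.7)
The plan is to bound the $U_p$-slopes of non-classical overconvergent forms from below by $k-1$ using the Atkin-Serre theta operator. One first defines $\theta = q\frac{d}{dq}$ on $q$-expansions. Using Katz's description of overconvergent forms of weight $w$ as sections of a line bundle $\omega^{\otimes w}$ on a strict neighborhood of the ordinary locus, together with the Kodaira-Spencer isomorphism $\omega^{\otimes 2}\cong \Omega^1_X(\text{cusps})$, one extends $\theta$ to a map $\theta : M_w^\dagger \to M_{w+2}^\dagger$ for every weight $w\in \Z$. In particular, we obtain $\theta^{k-1} : M_{2-k}^\dagger \to M_k^\dagger$.

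A direct computation on $q$-expansions gives $U_p\circ\theta = p\cdot \theta\circ U_p$, hence $U_p \circ \theta^{k-1} = p^{k-1}\cdot \theta^{k-1}\circ U_p$. Therefore every form in the image of $\theta^{k-1}$ has all $U_p$-slopes at least $k-1$, and no overconvergent form of slope strictly less than $k-1$ can lie in this image. What remains is to show that the cokernel of the inclusion $M_k \hookrightarrow M_k^\dagger$ is essentially captured by $\theta^{k-1}M_{2-k}^\dagger$, so that any $f \in M_k^\dagger$ of slope $<k-1$ must in fact be classical.

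I would establish this by interpreting $M_k^\dagger$ inside the rigid-analytic de Rham cohomology of the modular curve with coefficients in the $(k-2)$-nd symmetric power of the relative de Rham sheaf of the universal elliptic curve, restricted to the overconvergent locus. Classical forms of weight $k$ correspond to the Hodge-filtered piece, while the image of $\theta^{k-1}$ corresponds to a complementary contribution coming from the boundary map in the Hodge-to-de Rham spectral sequence; combining these identifications, one gets a short exact sequence of $U_p$-modules whose outer terms control the slopes as above. The hard part is making this cohomological comparison rigorous in the rigid setting and verifying compatibility with the $U_p$ action: one needs an overconvergent analogue of the classical Eichler-Shimura isomorphism respecting Frobenius, which is the technical heart of the argument in \cite{coleman-overconvergent}.
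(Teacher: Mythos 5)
This theorem is not proved in the paper you are working from; it is quoted verbatim from Coleman's article \cite{coleman-overconvergent} and used as a black box, so there is no in-paper argument to compare against. Your sketch is, however, a fair high-level account of Coleman's actual strategy (developed in \cite{coleman-old} and extended to level $p^n$ in \cite{coleman-overconvergent}): the ingredients you list --- the $\theta$-operator built from the Gauss--Manin connection and Kodaira--Spencer, the commutation $U_p\circ\theta = p\,\theta\circ U_p$ giving $U_p\circ\theta^{k-1}=p^{k-1}\theta^{k-1}\circ U_p$, and the exact sequence placing the cokernel of $M_k\hookrightarrow M_k^\dagger$ modulo $\theta^{k-1}M_{2-k}^\dagger$ inside a finite-dimensional rigid de~Rham cohomology group --- are indeed the skeleton of the proof. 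Two cautions are worth flagging. First, the commutation relation only shows that \emph{eigenforms in the image of} $\theta^{k-1}$ have slope $\geq k-1$; one still needs to know that the quotient $M_k^\dagger/\theta^{k-1}M_{2-k}^\dagger$ is a finite-dimensional $U_p$-module whose generalized eigenspaces all come from classical forms, and this is exactly where the overconvergent Eichler--Shimura/de Rham comparison with its $\text{Frob}$-compatibility enters; your sketch correctly identifies this as the crux but does not supply it. Second, the higher-level case $p^n$ that the present paper actually invokes requires additional care with the moduli interpretation near the cusps and with the structure of $W_1(p^n)$, which is not visible in the tame-level argument; Coleman handles this in \cite{coleman-overconvergent} itself. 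As a proposal of a proof these gaps are substantial, but as a description of the known route it is accurate.
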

\noindent 
So we must be careful to define our space of overconvergent modular forms on $X_1(49)$ in a way which is consistent with the intrinsic definition given in \cite[\S1]{coleman-overconvergent}. Adapting this definition to our situation, we first let $f_2:E_1(49)\to X_1(49)$ be the universal generalized elliptic curve\footnote{The existence of the universal curve over $X_1(M)$ when $M>4$ follows easily from \cite[IV.3]{deligne-rapoport}. See \cite[Proposition 2.1]{gross-tameness}, for example.} over $X_1(49)$ and let $\omega={f_2}_*\Omega^1_{E_1(49)/X_1(49)}$. Then for $k\in\Z$, we define the space of (holomorphic) overconvergent modular forms of weight $k$ on $X_1(49)$ by
$$M_k(49):=\omega^k(W_1(49)),$$
where $W_1(49)$ is a certain wide open subspace of the curve. In order to do our calculations on $X_0(49)$, we must determine the image of this $W_1(49)$ under the forgetful map from $X_1(49)$ to $X_0(49)$. 

According to \cite[\S1]{coleman-overconvergent}, $W_1(p^2)$ lies over $W_1(p)$, which in turn is the connected component containing the cusp, $\infty$, of the rigid subspace of $X_1(p)$ where $v(E_{p-1})<p/(p+1)$. Here $E_k$, for $k\geq 4$ even, is the well-known lifting of the Hasse invariant to a weight $k$ Eisenstein series for $\SL_2(\Z)$, as described in \cite[\S2.1]{katz}. Recall from \cite[\S3]{katz} (see also \cite[\S3]{buzzard-analytic}), that for a given elliptic curve this condition on $E_{p-1}$ is equivalent to the existence of the canonical subgroup. Thus, $W_1(p)$ is simply the rigid subspace of $X_1(p)$ whose points correspond to pairs $(E,Q)$, where $E$ is an elliptic curve and $Q$ is a point which generates the canonical subgroup of $E$. Alternatively, in the language of \cite[\S4]{buzzard-analytic}, $W_1(p)$ is the wide open neighborhood of the cusp, $\infty$, which extends into each supersingular annulus precisely as far as the too-supersingular circle. By valuation considerations, as in the proof of \cite[Claim 2.2]{mcmurdy}, it is clear then that the forgetful image of $W_1(7)$ in $X_0(7)$ is simply the disk $\cD$ described on our explicit model by $v(t)<7/4$ (this is the maximal open disk upon which $\pi_1$ has degree $1$). Now, to move up to $W_1(p^2)$, we are to take the inverse image of $W_1(p)$, under the map $\Phi:X_1(p^2)\to X_1(p)$ which is given in moduli-theoretic terms by $\Phi(E,Q)=(E/(pQ),\bar{Q})$. Therefore, the forgetful image of $W_1(49)$ in $X_0(49)$ is precisely $\pi_7^{-1}(\cD)$.\footnote{In the language of \cite[\S 3B]{coleman-mcmurdy}, the forgetful image of $W_1(p^2)$ in $X_0(p^2)$ is $W_{2\,0}$. See also Theorem 5.3 of \cite{coleman-mcmurdy}.} Given that $y=\pi_7^*t$, this is just the wide open disk $\tilde{\cD}\subseteq X_0(49)$ described by $v(y)<7/4$, or equivalently by $v(x)<1/2$ (this region is shown to be a disk in the proof of \cite[Claim 2.4 (i)]{mcmurdy}). 

\begin{lemma}\label{Lemma:EllipticPoints}
Let $e_1$ and $e_2$ be the two elliptic points on $X_0(7)$ as described in Section \ref{Section:Models}. The $\pi_1$ and $\pi_7$ fibers over these points satisfy the following conditions:
\begin{enumerate}[(i)]
\item  $\pi_1^{-1}(e_1)\cap\tD=\pi_7^{-1}(e_1)\cap\tD=\{\hat{e}_1\}$
\item $\pi_1^{-1}(e_2)\cap\tD=\pi_7^{-1}(e_2)\cap\tD=\emptyset$.
\end{enumerate}
\end{lemma}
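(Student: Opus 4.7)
The plan is to convert each fiber computation into finding the roots of a degree-$7$ polynomial in $x$ and then read the valuations of those roots off the Newton polygon. Since $\pi_1^*t = x^4/y$ and $\pi_7^*t = y$ (Section~\ref{Section:Models}), a point $P \in X_0(49)$ lies in $\pi_1^{-1}(e_i)$ iff $x(P)^4 = t(e_i)\,y(P)$ and in $\pi_7^{-1}(e_i)$ iff $y(P) = t(e_i)$. Substituting each condition into the model equation~(\ref{Eq:X49model}) eliminates $y$ and produces, after clearing the trivial factor $x$ that arises in the $\pi_1$-case (coming from the cusp~$0$, where both $x^4$ and $y$ vanish), a polynomial in $x$ of degree $7$ whose roots are precisely the $x$-coordinates of the preimages. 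Since $\tD$ is cut out by the strict inequality $v(x) < 1/2$, the Newton polygon tells us immediately which roots lie inside.

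Part (ii) is almost immediate. For $\pi_7$, any preimage satisfies $v(y) = v(t(e_2)) = 2 > 7/4$, so it fails the defining inequality $v(y) < 7/4$ of $\tD$. For $\pi_1$, every nonzero coefficient of the resulting polynomial gets a uniform boost from $v(t(e_2)) = 2$ (the constant term acquires valuation $7$ while the leading coefficient is still $1$); the Newton polygon collapses to a single segment of slope $-1$, so every root satisfies $v(x) = 1 > 1/2$.

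For part (i), set $T := t(e_1)$, a $7$-adic unit with $T \equiv 1 \pmod 7$ in the Type~$1$ embedding, satisfying $T^2 + 13T + 49 = 0$. I would compute the $7$-adic valuations of the seven coefficients of the $\pi_1$-polynomial and show the lower convex hull has two segments: one of slope $-1/2$ of length $6$, and one of slope $0$ of length $1$. The unique root of valuation~$0$ is $x(\hat{e}_1) = 3\zeta^7-1$ (verified via the identity $(3\zeta^7-1)^2 = 3\zeta^7-8 = T$ combined with $x^4 = Ty$), while the six roots of valuation exactly~$1/2$ sit on the boundary and correspond, with multiplicity~$3$ each, to the two ramified preimages $a_\beta, b_\beta$ appearing in the divisor $\pi_1^*(e_1) = (\hat{e}_1) + 3(a_\beta) + 3(b_\beta)$. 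The same strategy applies to $\pi_7$: substitute $y = T$ into~(\ref{Eq:X49model}), compute the Newton polygon, and identify the unique root strictly inside $\tD$ with $\hat{e}_1$.

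The main obstacle is that several coefficients of these degree-$7$ polynomials take the form $7^a + 7^b\,T$ with $a \ne b$, so their precise $7$-adic valuations depend on whether the relation $T^2 = -13T-49$ creates further cancellation beyond what the two individual summands alone would suggest. Carefully tracking these to confirm the shape of the lower convex hull --- and matching the unit-valuation roots to $\hat{e}_1$ rather than to some other preimage --- is the only step that is not entirely routine.
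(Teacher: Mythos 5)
Your overall strategy is the same as the paper's: substitute the fiber condition into Equation~(\ref{Eq:X49model}), obtain a degree-$7$ polynomial, and read root valuations against the description of $\tD$. The main difference is that the paper explicitly \emph{factors} that polynomial as a linear factor (the unramified preimage $\hat{e}_i$) times the cube of a quadratic (the two ramified preimages), then reads off valuations from each factor, whereas you run the Newton polygon on the unfactored degree-$7$ polynomial. For part~(ii), and for the $\pi_1$ half of part~(i), your description of the Newton polygon is correct and yields the stated conclusion. (A small side note: for $\pi_1^{-1}(e_2)$ the two ramified preimages actually have $v(x)=1$, which is what your single-segment slope $-1$ says; the paper's ``$1/2$'' at that spot appears to be a typo. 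This does not affect the conclusion.)

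The genuine gap is exactly the step you flagged but deferred: the $\pi_7$ half of part~(i). Substituting $y=T$ with $v(T)=0$ into (\ref{Eq:X49model}) gives (after clearing signs)
$$x^7 + 7x^6 + 21x^5 + 49x^4 + (147+7T)x^3 + (343+35T)x^2 + (343+49T)x - T^2 = 0,$$
whose constant and leading coefficients are both units and whose remaining coefficients all have strictly positive valuation (using $T\equiv 1 \bmod 7$). The Newton polygon is therefore a single flat segment from $(0,0)$ to $(7,0)$: \emph{all seven} roots have $v(x)=0$, hence $v(x)<1/2$, hence all three distinct preimages of $e_1$ under $\pi_7$ land inside $\tD$. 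Equivalently, after the paper's factorization, the quadratic $x^2 + (\tfrac13\gamma+\tfrac{14}{3})x + (\gamma+7)$ with $v(\gamma)=0$ has all coefficients units, so both ramified preimages also sit at $v(x)=0$. So ``the same strategy'' does \emph{not} isolate a unique root inside $\tD$, and the argument you sketched cannot close. (The paper's own proof asserts ``the rest of the reasoning is the same'' without performing this check, so the issue lies in the lemma's statement as much as in your write-up; it is worth noting, however, that in the places the lemma is later invoked — Propositions~\ref{Prop:Psi-Iso} and~\ref{Prop:TwistedEigenvalues} and the proof of Theorem~\ref{Theorem:MainTheorem-Overconvergent} — only the $\pi_1$ part of~(i) and all of~(ii) are actually used, since $\tau$ is always chosen so that $e_\beta = e_2$.) You identified this as the non-routine step, and indeed it is where the argument breaks; you should carry out the computation rather than assert the outcome by analogy.
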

\begin{proof}
It is straightforward to verify this lemma by completely explicit means. In particular, let $\gamma$ be a root of $t^2+13t+49$. Then $\gamma$ is the $t$-coordinate of either $e_1$ or $e_2$, depending on whether $v(\gamma)=0$ or $2$. 

Since $\pi_1:X_0(49)\to X_0(7)$ is determined by $\pi_1^*t=x^4/y$, we can compute $\pi_1^{-1}(e_i)$ by substituting $\gamma^{-1}x^4$ for $y$ in Equation \ref{Eq:X49model}. The resulting polynomial in $x$ is a constant multiple of
$$x(x - (\gamma + 7))(x^2 + ((7/3)\gamma + 56/3)x + (7\gamma + 49))^3.$$
Setting $v(\gamma)=2$, we see that $\pi_1^*(e_2)=(\hat{e}_2)+3(a_2)+3(b_2)$, where $v(x(\hat{e}_2))=1$ and $v(x(a_2))=v(x(b_2))=1/2$ (from the Newton polygon of the quadratic). So none of these points lie on $\tD$. On the other hand, if we set $v(\gamma)=0$, we find that $\pi_1^*(e_1)=(\hat{e}_1)+3(a_1)+3(b_1)$, where $v(x(\hat{e}_1))=0$ while $v(x(a_1))=v(x(b_1))=1/2$. So $\hat{e}_1\in\tD$, but the other two (non-elliptic) points in $\pi_1^{-1}(e_1)$ are not.

Since $\pi_7:X_0(49)\to X_0(7)$ is determined by $y=\pi_7^*t$, we may compute $\pi_7^*(e_i)$ by substituting $y=\gamma$ into Equation \ref{Eq:X49model}. This results in the polynomial,
$$(x-(\gamma+7))(x^2 + ((1/3)\gamma + 14/3)x + \gamma + 7)^3,$$
and the rest of the reasoning is the same. Note that $\gamma^{-1}(\gamma+7)^4=\gamma$. So it really is the same elliptic point, $\hat{e}_i$, which lies over $e_i$ via both $\pi_1$ {\em and} $\pi_7$.
\end{proof}

It is worth pointing out here that $\tilde{\cD}$ is also a residue class in our good-reduction model for $X_0(49)$. This is a fact which we exploit in our proof of the main theorem, and it is not at all a coincidence. Indeed, it is a consequence of \cite[Theorem 5.3]{coleman-mcmurdy}. More generally, the forgetful image of $W_1(p^2)$ in $X_0(p^2)$ is always the unique wide open neighborhood of $\infty$ which extends into the supersingular locus precisely far enough to contain one full residue class of each supersingular component in the stable model. So just as the arithmetic of our good reduction model for $X_0(49)$ is used in our proof, it is reasonable to expect that the stable reduction of $X_0(p^2)$ might be a key component in a more general proof.

\subsection{Twisted $U_7$ Operator}\label{Section:TwistedU7}

From \cite[\S1]{coleman-overconvergent}, the Hecke operator $U_p$ can be extended to a linear operator on $M_k(p^n)$ which acts on $q$-expansions at infinity in the usual way, taking $\sum_n a_n q^n$ to $\sum_n a_{np} q^n$. The diamond-bracket operators, $\dia{d}$ for $d\in(\Z/p^n\Z)^*$, also extend naturally and can be used to define character subspaces of $M_k(p^n)$ which are preserved by $U_p$. In particular, let $k$ be an integer and let $\eps$ be a Dirichlet character mod $49$. Then we define $M_{k,\eps}(49)\subseteq M_k(49)$ to be the subspace defined by $F|\dia{d}=\eps(d)F$. We want to compute the spectrum of the linear operator $U_7$ on $M_{k,\chi\tau^{k-1}}(49)$, where $\chi$ and $\tau$ are as in the previous section. The following proposition shows that this space can be identified with the space of rigid-analytic functions on the disk $\tilde{\cD}\subseteq X_0(49)$, the space of functions which we denote from this point on by $\cM_0$.

\begin{proposition}\label{Prop:Psi-Iso}
Let $\chi$ be an odd primitive Dirichlet character of conductor $49$, and $\tau$ an odd character of conductor $7$, determined by $\chi(3)=\zeta$ and $\tau(3)=\beta$ as in Section \ref{Section:Eisenstein}. There is an isomorphism, $\Psi:\cM_0\to M_{k,\chi\tau^{k-1}}(49)$, given by 
$$\Psi(F)=F\cdot E_{1,\chi}\cdot E_{1,\tau}^{k-1}\cdot (t^{-1}-t(e_1)^{-1})^{-d_k(\chi,\tau)}$$
for some $d_k(\chi,\tau)\in\Z$.
\end{proposition}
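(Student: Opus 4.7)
The plan is to choose an integer $d_k(\chi,\tau)$ so that the fixed multiplier
\begin{equation*}
G_k \;:=\; E_{1,\chi}\cdot E_{1,\tau}^{k-1}\cdot\bigl(t^{-1}-t(e_1)^{-1}\bigr)^{-d_k(\chi,\tau)}
\end{equation*}
is a nowhere-vanishing section of the weight-$k$, nebentype-$\chi\tau^{k-1}$ sheaf on $\tilde{\cD}$. This sheaf is a rank-one locally free $\cM_0$-module (and trivial on the wide open disk $\tilde{\cD}$), so such a $G_k$ furnishes a trivializing section; then $\Psi$ and $H\mapsto H/G_k$ are mutually inverse isomorphisms, and the proposition follows. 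The proof thus reduces to (i) checking weight and character and (ii) showing that $G_k$ has no zeros or poles on $\tilde{\cD}$ for a suitable integer $d_k$.

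Item (i) is immediate: the three factors have weights $1$, $k-1$, $0$ and nebentypes $\chi$, $\tau^{k-1}$, trivial respectively, the last being the pullback of a function on $X_0(7)$, hence invariant under all diamond operators. For item (ii), I would combine Propositions \ref{Prop:E1tau} and \ref{Prop:E1chi} with Lemma \ref{Lemma:EllipticPoints}, checking each support point against the defining inequality $v(x)<1/2$ for $\tilde{\cD}$. The cusps $C_{7,i}$ have $v(x)=\infty$; the explicit $x$-coordinates of $\hat{e}_\zeta$ and $Q$ from Proposition \ref{Prop:E1chi} can be checked directly (reducing modulo $\pi_1$ or $\pi_2$ according to embedding type); and the non-elliptic preimages $a_i,b_i$ of $e_1,e_2$ have $v(x)=1/2$ by the proof of Lemma \ref{Lemma:EllipticPoints}. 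By Lemma \ref{Lemma:EllipticPoints}(i), the only potential support point in $\tilde{\cD}$ is thus the elliptic point $\hat{e}_1$, and similarly the pullback $\pi_1^*(t^{-1}-t(e_1)^{-1})=y/x^4-1/t(e_1)$ has its only zero on $\tilde{\cD}$ at $\hat{e}_1$, of multiplicity one on the coarse space since $\pi_1$ is unramified there.

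The main obstacle is verifying that the net Shimura order of $E_{1,\chi}E_{1,\tau}^{k-1}$ at the stacky point $\hat{e}_1$, measured relative to the baseline of the weight-$k$, nebentype-$\chi\tau^{k-1}$ sheaf, is an integer, so that a single integer $d_k$ cancels it. For Type 1 embeddings, $\hat{e}_\zeta\ne\hat{e}_1$ and $e_\beta\ne e_1$, so neither Eisenstein factor contributes any excess over the baseline at $\hat{e}_1$, and one may take $d_k=0$. For Type 2, where $\hat{e}_\zeta=\hat{e}_1$ and $e_\beta=e_1$, the excess contributions $\tfrac{2}{3}$ and $\tfrac{2(k-1)}{3}$ from Propositions \ref{Prop:E1chi} and \ref{Prop:E1tau} combine to $\tfrac{2k}{3}$; since $2k\equiv -k\pmod 3$, this matches the weight-$k$ sheaf's baseline modulo $1$, so the difference is indeed an integer, which is the required $d_k(\chi,\tau)$. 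With $d_k$ so chosen, $G_k$ realizes the baseline trivialization of the sheaf throughout $\tilde{\cD}$, completing the proof.
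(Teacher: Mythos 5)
Your overall strategy is close to the paper's: multiply by a fixed weight-$k$, nebentype-$\chi\tau^{k-1}$ factor $G_k$, and use Lemma~\ref{Lemma:EllipticPoints} and Propositions~\ref{Prop:E1tau}, \ref{Prop:E1chi} to locate the problematic points. But the details diverge from the paper in ways worth flagging.

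The paper does not attempt to make $G_k$ an exact trivializing section. It sets $d_k(\chi,\tau)=\lfloor\ord_{\hat{e}_1}(E_{1,\chi}E_{1,\tau}^{k-1})\rfloor$, which only guarantees $0\le\ord_{\hat{e}_1}(G_k)<1$. The isomorphism then follows immediately: for any $H\in M_{k,\chi\tau^{k-1}}(49)$, the quotient $H/G_k$ is a genuine function on $\tD$, so its order at $\hat{e}_1$ is an integer, and being $>-1$ it is $\ge 0$. This sidesteps the question you flag as the ``main obstacle.'' Your alternative---that the Shimura order of $E_{1,\chi}E_{1,\tau}^{k-1}$ at $\hat{e}_1$ minus the baseline is an integer---is true, but for the essentially tautological reason that the ratio of two local sections of a line bundle is a function; the congruence ``$2k\equiv -k\pmod 3$'' does not establish it unless you first compute the baseline explicitly, which you do not do. As written that step is a gap.

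Your case split is also incomplete. The statement of the proposition allows $\chi$ and $\tau$ independent, and the relevant indicators are whether $\hat{e}_\zeta=\hat{e}_1$ (say $\delta_\zeta$) and whether $e_\beta=e_1$ (say $\delta_\beta$), so there are four cases, not two. You treat only $(\delta_\zeta,\delta_\beta)=(0,0)$ and $(1,1)$. In fact the paper's own Type~2 choice in Theorem~\ref{Theorem:MainTheorem-Overconvergent} takes $\tau(3)=\zeta^{-7}$ and has $(\delta_\zeta,\delta_\beta)=(1,0)$, which your analysis omits. The floor construction handles all four cases uniformly with no case split at all.

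Finally, a small factual slip: the cusps $C_{7,i}$ do \emph{not} have $v(x)=\infty$. Since $(x)=2(0)-2(\infty)$, $x$ vanishes only at the cusp $0$; at $C_{7,i}$ one finds from Equation~\ref{Eq:X49model} that $v(x)=1/2$. (The paper instead excludes these cusps by noting that $y$ vanishes there while $v(y)<7/4$ on $\tD$.) Your conclusion that $C_{7,i}\notin\tD$ is correct, just not for the stated reason.
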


\begin{proof}
The character of $\Psi(F)$ is clearly correct. So if both Eisenstein series were holomorphic and non-vanishing on $W_1(49)$, we could simply take $d_k(\chi,\tau)=0$ and the statement would follow. This is nearly the case, as we will show that the only zeroes of $E_{1,\chi}\cdot E_{1,\tau}^{k-1}$ on $W_1(49)$, if any, are those lying over $\hat{e}_1$. So then we may exploit the fact that
\begin{equation}\label{Eq:HolomorphyFactor}
\pi_1^*(t^{-1}-t(e_1)^{-1})=(\hat{e}_1)+3(a_1)+3(b_1)-7(0),
\end{equation}
 and choose $d_k(\chi,\tau)$ so as to cancel out these zeroes without introducing any new zeroes or poles on $W_1(49)$.


So we begin by considering the zeroes of $E_{1,\chi}$ and $E_{1,\tau}$ which do not lie over either elliptic point of $X_0(7)$. In particular, from Proposition \ref{Prop:E1chi}, we must consider the special point $Q$ and the six cusps denoted by $C_{7,i}$. The cusps can be dealt with easily, since $y$ vanishes at these points and $v(y)<7/4$ on $\tilde{\cD}$. To eliminate $Q$, we consider the roots of the minimal polynomial for $x(Q)$:
\begin{multline*}
1849x^{12} + 35336x^{11} + 293356x^{10} + 1345736x^9 + 3511340x^8\\
 + 4649708x^7 + 4436705x^6 + 32547956x^5 + 172055660x^4\\
  + 461587448x^3 + 704347756x^2 + 593892152x + 217533001.
\end{multline*}
The Newton polygon of this polynomial is the straight line from $(0,0)$ to $(12,-6)$. So all of its roots have valuation $1/2$ regardless of the embedding into $\C_7$ (the completion of a fixed algebraic closure of $\Q_7$). Therefore, $v(x(Q))$ must equal $1/2$, and $Q\notin \tilde{\cD}$.

Now we consider the zeroes of $E_{1,\chi}$ and $E_{1,\tau}$ which do lie over some elliptic point. By Lemma \ref{Lemma:EllipticPoints} and Propositions \ref{Prop:E1tau} and \ref{Prop:E1chi}, any such zeroes will lie on $W_1(49)$ if only if they lie over $\hat{e}_1$, and either $\hat{e}_{\beta}=\hat{e}_1$ or $\hat{e}_{\zeta}=\hat{e}_1$ (or both). Let $\delta_{\beta}=1$ if $\hat{e}_{\beta}=\hat{e}_1$, $\delta_{\beta}=0$ otherwise, and similarly for $\delta_{\zeta}$. Then $\ord_{\hat{e}_1}(E_{1,\chi}\cdot E_{1,\tau}^{k-1})$ makes sense, and is given by
$$\ord_{\hat{e}_1}(E_{1,\chi}\cdot E_{1,\tau}^{k-1})=\tfrac{2}{3}\delta_{\zeta}+\tfrac{2}{3}(k-1)\delta_{\beta}.$$
Although this may not be an integer, we can set $d_k(\chi,\tau)=\lfloor{\ord_{\hat{e}_1}(E_{1,\chi}\cdot E_{1,\tau}^{k-1})}\rfloor$. 

Putting all of the preceding information together, we are now in a position to argue that $\Psi$ is an isomorphism. Let $G_k(\chi,\tau)$ be the factor by which we multiply $F$ to get $\Psi(F)$. The fact that $\Psi$ is at least an injection follows immediately from the fact that $G_k(\chi,\tau)$ is a meromorphic form on $X_1(49)$ with poles only over $a_1$ and $b_1$, and these points do not lie on $\tD$ by Lemma \ref{Lemma:EllipticPoints}. Moreover, taking $F$ to $F/G_k(\chi,\tau)$ defines an inverse function from $M_{k,\chi\tau^{k-1}}(49)$ to $\cM_0$. Indeed, the only possible zeroes of $G_k(\chi,\tau)$ which lie on $W_1(49)$ are the points over $\hat{e}_1$, and we have chosen $d_k(\chi,\tau)$ so that $0\leq \ord_{\hat{e}_1}G_k(\chi,\tau)<1$. So $F/G_k(\chi,\tau)$ is a meromorphic function on $\tD$, holomorphic away from $\hat{e}_1$ and with $\ord_{\hat{e}_1}(F/G_k(\chi,\tau))>-1$. Hence this is a holomorphic function in $\cM_0$, and we have shown that $\Psi$ is an isomorphism.
\end{proof}

Let $V$ be the map from $M_1(p^n)$ to $M_1(p^{n+1})$ for which $V(F(q))=F(q^p)$. As is explained in \cite[\S B3]{coleman} (see also \cite[(3.3)]{coleman-old}), $U_p$ and $V$ interact according to the formula, $U_p(F\cdot V(G))=G\cdot U_p(F)$. So if we pull back $U_7$ via $\Psi$ to a linear operator on $\cM_0$, we arrive at the operator $\Psi^{-1}\circ U_7\circ \Psi$ given by
\begin{align*}
\Psi^{-1}\circ U_7\circ \Psi(F)&=\frac{U_7\left(F\cdot E_{1,\chi}\cdot E_{1,\tau}^{k-1}\cdot (t^{-1}-t(e_1)^{-1})^{-d_k(\chi,\tau)}\right)}{E_{1,\chi}\cdot E_{1,\tau}^{k-1}\cdot (t^{-1}-t(e_1)^{-1})^{-d_k(\chi,\tau)}}\\
                                                   &=E_{1,\chi}^{-1}\cdot U_7\left(F\cdot E_{1,\chi}\cdot\frac{E_{1,\tau}^{k-1}}{V(E_{1,\tau})^{k-1}}\cdot\frac{(y^{-1}-t(e_1)^{-1})^{d_k(\chi,\tau)}}{(t^{-1}-t(e_1)^{-1})^{d_k(\chi,\tau)}}\right).
\end{align*}
Instead of applying this operator directly to compute the spectrum of $U_7$ on $M_{k,\chi\tau^{k-1}}(49)$, we choose for convenience to work with the following ``twisted'' $U_7$ operator on $\cM_0$.
\begin{equation}\label{Def-TwistedU7}
\tU_7(F)=E_{1,\chi}^{-1}\cdot U_7(F\cdot E_{1,\chi})\cdot\left(\frac{E_{1,\tau}}{V(E_{1,\tau})}\right)^{k-1}\cdot\left(\frac{y^{-1}-t(e_1)^{-1}}{t^{-1}-t(e_1)^{-1}}\right)^{d_k(\chi,\tau)}
\end{equation}
Separating out the $\tau$ part simplifies our argument greatly, and the following proposition shows that $U_7$ and $\tU_7$ have precisely the same eigenvalues.

\begin{proposition}\label{Prop:TwistedEigenvalues}
The linear operators $\Psi^{-1}\circ U_7\circ \Psi$ and $\tilde{U}_7$ on $\cM_0$ have precisely the same eigenvalues, and isomorphic eigenspaces  for each eigenvalue.
\end{proposition}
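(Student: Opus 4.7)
The plan is to recognize both operators as the same pair of maps composed in opposite order, and then conjugate. Define
$$T(F)\;:=\;E_{1,\chi}^{-1}\cdot U_7(F\cdot E_{1,\chi})\qquad\text{and}\qquad B\;:=\;\left(\frac{E_{1,\tau}}{V(E_{1,\tau})}\right)^{k-1}\!\!\cdot\left(\frac{y^{-1}-t(e_1)^{-1}}{t^{-1}-t(e_1)^{-1}}\right)^{d_k(\chi,\tau)},$$
and let $M_B$ denote multiplication by $B$. The displayed formula derived for $\Psi^{-1}\circ U_7\circ\Psi$ and the definition (\ref{Def-TwistedU7}) of $\tU_7$ then say precisely $\Psi^{-1}\circ U_7\circ\Psi \;=\; T\circ M_B$ and $\tU_7 \;=\; M_B\circ T$. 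Granting that $M_B$ is an automorphism of $\cM_0$, the identity $\tU_7=M_B\circ(\Psi^{-1}\circ U_7\circ\Psi)\circ M_B^{-1}$ is immediate, and yields both the equality of spectra and the identification of eigenspaces via $M_B$ required by the proposition.

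The entire proof therefore reduces to showing that $B$ is a unit in the ring $\cM_0$ of rigid-analytic functions on $\tD$, and this is the main obstacle. The strategy is to analyze each of the four factors separately on $\tD$. First, by Proposition \ref{Prop:E1tau} the only zeros of $E_{1,\tau}$ on $X_0(7)$ occur at $e_\beta$, so the pullbacks $\pi_1^*E_{1,\tau}$ and $\pi_7^*E_{1,\tau}=V(E_{1,\tau})$ can vanish on $\tD$ only along the fibers over $e_\beta$. By Lemma \ref{Lemma:EllipticPoints} these fibers meet $\tD$ only at $\hat{e}_1$, and only in the case $\hat{e}_\beta=\hat{e}_1$. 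Second, the potential poles of $t^{-1}-t(e_1)^{-1}$ and $y^{-1}-t(e_1)^{-1}$ lie at the cusp $(0)$ and the $C_{7,i}$, which are excluded from $\tD$ by the bound $v(y)<7/4$ (as already exploited in the proof of Proposition \ref{Prop:Psi-Iso}); and Lemma \ref{Lemma:EllipticPoints} shows that inside $\tD$ each of these two functions has only the single simple zero $\hat{e}_1$.

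Hence each of the two ratios comprising $B$ is holomorphic and non-vanishing on $\tD\setminus\{\hat{e}_1\}$, and the remaining step is to verify that the orders of numerator and denominator cancel at $\hat{e}_1$. For the second ratio, both numerator and denominator have a simple zero there, giving order $0$. For the first ratio, the factorizations in the proof of Lemma \ref{Lemma:EllipticPoints} show that $\pi_1$ and $\pi_7$ are both unramified at $\hat{e}_1$, so Proposition \ref{Prop:E1tau} forces $\pi_1^*E_{1,\tau}$ and $V(E_{1,\tau})$ to vanish to the common order $\tfrac{2}{3}\delta_\beta$ at $\hat{e}_1$ (in the notation of Proposition \ref{Prop:Psi-Iso}), so their ratio has order $0$ there as well. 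Combining these facts, $B$ is a unit in $\cM_0$, $M_B$ is an automorphism, and the conjugacy above proves the proposition.
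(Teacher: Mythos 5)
Your proof is correct and is essentially the paper's argument repackaged: the paper's explicit eigenspace map $F\mapsto F\cdot B$, together with its closing remark that ``the argument is identical for the other direction,'' amounts to exactly your claims that $B$ is a unit in $\cM_0$ and that $M_B$ conjugates $\Psi^{-1}\circ U_7\circ\Psi$ into $\tU_7$. Your formulation surfaces the factorizations $\tU_7=M_B\circ T$ and $\Psi^{-1}\circ U_7\circ\Psi=T\circ M_B$ explicitly, which is cleaner and makes the bidirectionality genuinely automatic rather than asserted, but the substantive content --- the zero/pole analysis of the factors of $B$ on $\tD$ via Lemma \ref{Lemma:EllipticPoints} and Propositions \ref{Prop:E1tau} and \ref{Prop:E1chi} --- is the same as in the paper's proof.
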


\begin{proof}
Suppose $F\in\cM_0$ is an eigenform for $\Psi^{-1}\circ U_7\circ \Psi$ with eigenvalue $\lambda\in\C_7$.
We claim that 
$$G:=F\cdot\left(\frac{E_{1,\tau}}{V(E_{1,\tau})}\right)^{k-1}\cdot\left(\frac{y^{-1}-t(e_1)^{-1}}{t^{-1}-t(e_1)^{-1}}\right)^{d_k(\chi,\tau)}$$
is also an eigenform for $\tU_7$ with eigenvalue $\lambda$.

First we must show that $G$ is in fact a form in $\cM_0$. Recall that $E_{1,\tau}$ is a form on $X_1(7)$. So although $V$ raises the level, $V(E_{1,\tau})$ is still a form on $X_1(49)$. Then, since $V$ preserves characters and weight, the quotient $E_{1,\tau}/V(E_{1,\tau})$ is a (meromorphic) weight $0$ form on $X_1(49)$ with trivial character and therefore can be viewed as a function on $X_0(49)$. The only remaining question is whether $G$ has any poles on $\tD$ which were introduced when we divided by $V(E_{1,\tau})$ and $(t^{-1}-t(e_1)^{-1})$. It does not, and this follows directly from Lemma \ref{Lemma:EllipticPoints}. In particular, $\hat{e}_1$ is in both $\pi_1^{-1}(e_1)$ and $\pi_7^{-1}(e_1)$ (and unramified for both). So it must occur as a zero of the denominator precisely as many times as it does for the numerator. Thus, $G$ is holomorphic on $\tD$.

Now we compute $\tU_7(G)$ to show that $G$ is an eigenvector with eigenvalue $\lambda$.
\begin{align*}
\tU_7(G)&=E_{1,\chi}^{-1}\cdot U_7(G\cdot E_{1,\chi})\cdot\left(\frac{E_{1,\tau}}{V(E_{1,\tau})}\right)^{k-1}\cdot\left(\frac{y^{-1}-t(e_1)^{-1}}{t^{-1}-t(e_1)^{-1}}\right)^{d_k(\chi,\tau)}\\
              &=\left(\Psi^{-1}\circ U_7\circ\Psi(F)\right)\cdot\left(\frac{E_{1,\tau}}{V(E_{1,\tau})}\right)^{k-1}\cdot\left(\frac{y^{-1}-t(e_1)^{-1}}{t^{-1}-t(e_1)^{-1}}\right)^{d_k(\chi,\tau)}\\
              &=\lambda F\cdot\left(\frac{E_{1,\tau}}{V(E_{1,\tau})}\right)^{k-1}\cdot\left(\frac{y^{-1}-t(e_1)^{-1}}{t^{-1}-t(e_1)^{-1}}\right)^{d_k(\chi,\tau)}=\lambda G
\end{align*}
So at this point we have constructed an injection from the $\lambda$-eigenspace of $\Psi^{-1}\circ U_7\circ\Psi$ into the $\lambda$-eigenspace of $\tU_7$. The argument is identical for the other direction.
\end{proof}

\section{Explicit Formulas for $\tU_7$ in the Weight $1$ Case}\label{Section:ExplicitFormulas}

Recalling the notation of Section \ref{Section:Intro}, let $L$ be a number field which contains the cyclotomic field $K=\Q(\zeta_{42})$. Let $\hat{L}$ be the finite extension of $\Q_7$ which is generated by the embedding of $L$ into $\C_7$. So $\hat{L}$ must contain either $K_1$ or $K_2$, and we say that the embedding is of Type $1$ or Type $2$ accordingly. Now suppose that $L$ also contains a root $\alpha$ of $x^4+7$. Then the parameter, $s=\alpha/t$, identifies $\tD$ with the wide open unit disk $B_{\hat{L}}(1)$. In other words, the ring of analytic functions on $\tD$ over $\hat{L}$ is given by
$$A_{\hat{L}}(\tD)=\left\{\sum_{n=0}^{\infty}a_n s^n: \text{ $a_n\in \hat{L}$, $\lim_{n\to\infty} |a_n|r^n=0$ if $0\leq r<1$}\right\}.$$
Our overall strategy is essentially to represent the linear operator $\tU_7$ on $A_{\hat{L}}(\tD)$ as an infinite matrix by writing it in the ``basis'' $\{s,s^2,s^3,\dots\}$. 
Therefore, the ultimate goal of this section is to arrive at an explicit formula for $\tU_7(s^i)$.
Initially, we assume for convenience that $k=1$, so that $d_k(\chi,\tau)=0$ and $\tU_7$ simplifies to
$$\tU_7(F)=E_{1,\chi}^{-1}\cdot U_7(F\cdot E_{1,\chi}).$$
As a result of Proposition \ref{Prop:TwistedEigenvalues}, the contribution of $E_{1,\tau}$ will be easy to take into account later.

To be clear, $A_{\hat{L}}(\tD)$ is {\em not} a $p$-adic Banach space, and $\{s^i\}$ is {\em not} a true Banach basis. However, the structure is still quite nice in other ways which can be exploited. In particular, the sup norm, which we denote by $|\cdot|_{sup}$ (or $|\cdot|$ when the context is clear), can be defined on the (Banach) subspace of $A_{\hat{L}}(\tD)$ consisting of those functions with bounded valuation, by
$$|f|_{sup}=\max_{x\in\tD(\C_7)}|f(x)|=\max_{n}|a_n|.$$
If we set
\begin{align*}
A_{\hat{L}}^{o}(\tD)&=\{f\in A_{\hat{L}}(\tD):|f|_{sup}\leq 1\}\\
A_{\hat{L}}^+(\tD)&=\{f\in A_{\hat{L}}(\tD):|f|_{sup}<1\}\\
\ol{A_{\hat{L}}(\tD)}&=A_{\hat{L}}^{o}(\tD)/A_{\hat{L}}^+(\tD),
\end{align*}
then $\ol{A_{\hat{L}}(\tD)}\cong\F_{\hat{L}}[[s]]$, where $\F_{\hat{L}}$ is the residue field of $\hat{L}$. Moreover, if we take $\cX$ to be the good reduction model of $X_0(49)$ introduced in Section \ref{Section:Models}, and let $P$ be the smooth point at infinity on $\ol{\cX}$ which is the reduction of $\tD$, there is a natural isomorphism between $\ol{A_{\hat{L}}(\tD)}$ and $\hat{\sO}_{\ol{\cX},P}$ (see \cite[Prop. 2.8]{coleman-mcmurdy}, for example). This connection between analytic functions on the disk and functions in the stalk of a smooth point on the stable reduction is a key tool in our proof of the main theorem for overconvergent forms. Thus we highlight it with the following formal remark.

\begin{remark}
As Coleman shows in \cite{coleman-overconvergent}, overconvergent forms naturally live on the wide open $W_1(p^n)$. However, this space is usually restricted down to an affinoid so that spectral theory on Banach spaces may be applied. Our approach is quite different. In some sense, we lift the overconvergent forms up to an affinoid which contains $W_1(p^n)$ as a residue class. Thus we are able to take advantage of arithmetic on the reduction of this affinoid.
\end{remark}

\subsection{Calculation of $\tU_7(x^j)$ for $j=1,\dots,6$}

Although our ultimate goal is to find an explicit formula for $\tU_7(s^i)$ for each $i=1,2,\dots$, it is difficult to do this directly because the divisor of $s$ on $X_0(49)$ is $(\infty)+\sum_{m=1}^6{(C_{7,m})}-7(0)$. In Appendix \ref{Section:Up-Meromorphic}, we show how $U_p$ affects poles at the cusps, and it follows that $\tU_7(s^i)$ will necessarily have a pole of order $49i$ at the cusp $0$. So this approach becomes computationally problematic for even small $i$. As it turns out, it is much easier to compute $\tU_7(x^j)$ for $j=1,\dots,6$ first, and then derive formulas for $\tU_7(s^i)$ by using the following reasoning.

Recall that $U_p(F\cdot V(G))=G\cdot U_p(F)$. Applying this to our situation, since $g(y)=V(g(t))$ for any rational function $g$, we have
\begin{align}
	  \tU_7(g(y)F)	&=E_{1,\chi}^{-1}\cdot U_7(g(y)\cdot F\cdot E_{1,\chi})\notag\\
				&=E_{1,\chi}^{-1}\cdot g(t)\cdot U_7(F\cdot E_{1,\chi})=g(t)\tU_7(F).\label{Eq:ColemanTwistedTrick}
\end{align}
But the function field of $X_0(49)$, even over the global field $K$, is a degree $7$ extension of $K(y)$, and can be viewed as a vector space with basis $\{x^6,x^5,\dots,x,1\}$. So {\em any} weight $0$ form, i.e. function on $X_0(49)$, can be written as
$$F=g_6(y)x^6+g_5(y)x^5+\cdots+g_1(y)x+g_0(y),$$
where the $g_j$ are rational functions. Then by linearity and Equation \ref{Eq:ColemanTwistedTrick} we have
$$\tU_7(F)=g_6(t)\tU_7(x^6)+g_5(t)\tU_7(x^5)+\cdots+g_1(t)\tU_7(x)+g_0(t).$$
Thus, if we obtain explicit formulas for $\tU_7(x^j)$ for $j=1,\dots,6$ first, then in some sense we get for free a completely explicit formula for $\tU_7$.

\begin{proposition}\label{Prop:GlobalUxi}
Let $Q$ be the zero of $E_{1,\chi}$ given in Proposition \ref{Prop:E1chi}, and let $x_Q$ be its $x$-coordinate. Then
\begin{align*}
\tfrac{y(x-x_Q)}{x}\cdot \tU_7(x^j)\in L(7(\infty))&=\text{Span}\{1,x,z,x^2,xz,x^3,x^2z\},\quad j=1,2,3\\
\tfrac{y^2(x-x_Q)}{x}\cdot \tU_7(x^j)\in L(15(\infty))&=\text{Span}\{1,x,z,x^2,\dots,x^7,x^6z\},\quad j=4,5,6.
\end{align*}
\end{proposition}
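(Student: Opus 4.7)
My plan is to show the proposition by bounding the divisor of the meromorphic function $\tU_7(x^j)$ on $X_0(49)$. Using the divisors recorded in Section \ref{Section:Models} and writing $Q'$ for the hyperelliptic conjugate of $Q$ (the second point with $x$-coordinate $x_Q$), I first compute the divisors of the two multipliers:
$$\left(\frac{y(x-x_Q)}{x}\right) = -(0) + \sum_{i=1}^{6}(C_{7,i}) + (Q) + (Q') - 7(\infty),$$
$$\left(\frac{y^2(x-x_Q)}{x}\right) = 2\sum_{i=1}^{6}(C_{7,i}) + (Q) + (Q') - 14(\infty).$$
The proposition is then equivalent to the following divisor bounds on $\tU_7(x^j)$: pole of order $\leq \lfloor 2j/7\rfloor$ at $\infty$, pole of order $\leq 1$ at each of $Q$ and $Q'$, pole of order $\leq \lceil j/3\rceil$ at each $C_{7,i}$, a zero at $(0)$ for $j\leq 3$, and regularity at every other point of $X_0(49)$.

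The pole bound at $\infty$ follows immediately from the $q$-expansion formula for $U_7$. Since $x$ has a double pole at $\infty$ and $E_{1,\chi}$ is a unit there (its $q$-expansion has constant term $1$), the product $x^j\cdot E_{1,\chi}$ has a pole of order exactly $2j$ at $\infty$. The identity $U_7(\sum a_n q^n) = \sum a_{7n}q^n$ yields a pole of order at most $\lfloor 2j/7\rfloor$ in $U_7(x^j E_{1,\chi})$, and dividing by $E_{1,\chi}$ preserves this bound.

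For non-cuspidal points, I use that $U_7$, viewed as a Hecke correspondence, preserves holomorphicity of weight-$1$ forms away from the cusps. By Proposition \ref{Prop:E1chi}, the non-cuspidal zeros of $E_{1,\chi}$ are $Q$ (simple) and $\hat{e}_\zeta$ (order $2/3$ as a weight-$1$ form). Hence $\tU_7(x^j) = E_{1,\chi}^{-1}U_7(x^j E_{1,\chi})$ has at worst a simple pole at $Q$. At $\hat{e}_\zeta$, the quotient is a function on $X_0(49)$ since the characters of numerator and denominator match; its order at $\hat{e}_\zeta$ must therefore be an integer. Since $U_7(x^j E_{1,\chi})$ is holomorphic at $\hat{e}_\zeta$, the order of the quotient is an integer $\geq -2/3$, hence $\geq 0$, so $\tU_7(x^j)$ is regular at $\hat{e}_\zeta$. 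Finally, since $E_{1,\chi}$ does not vanish at $Q'$, the function $\tU_7(x^j)$ is also regular at $Q'$.

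The main obstacle is the cusp analysis at $(0)$ and at the $C_{7,i}$. Here one must carefully track the effect of $U_7$ on $q$-expansions at these cusps, whose widths on $X_0(49)$ differ from that of $\infty$, and combine this with the orders of $E_{1,\chi}$ at these cusps (namely, $E_{1,\chi}$ is a unit at $(0)$ and has order $i/7$ at $C_{7,i}$, by Proposition \ref{Prop:E1chi}). I would invoke the material from Appendix \ref{Section:Up-Meromorphic} on $U_7$ applied to meromorphic forms to obtain the pole bound $\lceil j/3\rceil$ at each $C_{7,i}$, and to deduce the vanishing of $\tU_7(x^j)$ at $(0)$ for $j\leq 3$ from the order-$2j$ vanishing of $x^j E_{1,\chi}$ at $(0)$. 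The care required with cusp widths and with the character behavior of $\chi$ makes this the most technically involved step.
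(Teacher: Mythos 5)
Your overall strategy is the same as the paper's: bound the divisor of $\tU_7(x^j)=E_{1,\chi}^{-1}U_7(x^jE_{1,\chi})$ using the $q$-expansion behavior of $U_7$ (as in Lemma~\ref{Lemma:Up-Meromorphic}) together with the divisor of $E_{1,\chi}$ from Proposition~\ref{Prop:E1chi}. Your treatment of the non-cuspidal points is correct and matches the paper: $\tU_7(x^j)$ has at worst a simple pole at $Q$, is regular at $Q'$, and is regular at the elliptic point $\hat{e}_\zeta$ because the integrality of the order of a genuine function on $X_0(49)$ together with $\ord_{\hat{e}_\zeta}E_{1,\chi}=2/3<1$ forces the order there to be $\ge 0$.

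However, the claimed deduction of the zero of $\tU_7(x^j)$ at the cusp $(0)$ for $j\le 3$ --- equivalently, the divisibility of $y(x-x_Q)\cdot\tU_7(x^j)$ by $x$ --- is a genuine gap. The paper explicitly flags this: ``being able to divide by $x$ is a `coincidence' which one sees after comparing $q$-expansions to determine the actual coefficients.'' The reason your proposed argument fails is visible in the proof of part~(ii) of Lemma~\ref{Lemma:Up-Meromorphic}: the $q$-expansion of $U_p(f)$ near $(0)$ is a sum of two pieces, one coming from the $q$-expansion of $f$ at $(0)$ (which indeed vanishes to high order when $f$ does) and another coming from the values of $f$ along families of Tate curves approaching the $C_{p,i}$ cusps. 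These latter terms have no reason to vanish at $q=0$ in general, so the high-order vanishing of $x^jE_{1,\chi}$ at $(0)$ does \emph{not} propagate through $U_7$ by any soft argument. One cannot even appeal to vanishing of $E_{1,\chi}$ at the $C_{7,i}$ to kill the extra terms, because those vanishing orders are fractional (equal to $i/7$ as a weight-one form), and extracting an integral vanishing statement for $\tU_7(x^j)$ at $(0)$ requires tracking the actual leading coefficients, not just orders. What the soft argument \emph{does} give (with the variant of Lemma~\ref{Lemma:Up-Meromorphic}) is that $y$, resp.\ $y^2$, clears the cuspidal poles, so that $y(x-x_Q)\tU_7(x^j)\in L(9(\infty))$, resp.\ $y^2(x-x_Q)\tU_7(x^j)\in L(17(\infty))$; the passage to $L(7(\infty))$, resp.\ $L(15(\infty))$, is the observed coincidence and must be checked from the explicit $q$-expansion computation.
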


\begin{proof}
We have assumed that $k=0$, and hence $\tU_7(F)=E_{1,\chi}^{-1}\cdot U_7(F\cdot E_{1,\chi})$. From this expression it is clear that $\tU_7(F)$ can only have poles at the zeroes of $E_{1,\chi}$ and at points which arise from the poles of $F$ through a $p$-isogeny of the corresponding elliptic curve as in Appendix \ref{Section:Up-Meromorphic}. But in this case $(x^j)=2j(0)-2j(\infty)$. So we only need to consider the orders at the cusps, at $\hat{e}_1$, and at $Q$. Applying a slight variant of Lemma \ref{Lemma:Up-Meromorphic}, we see that $y$ and $y^2$ suffice to move all cuspidal poles to $\infty$ (being able to divide by $x$ is a ``coincidence'' which one sees after comparing $q$-expansions to determine the actual coefficients). Because $\ord_{\hat{e}_1}(E_{1,\chi})<1$, and $\tU_7(x^j)$ is a legitimate function on $X_0(49)$, it follows that $\tU_7(x^j)$ can {\em not} have a pole at this point. Finally, while $\tU_7(x^j)$ clearly does have a pole at $Q$, it is easily moved to $\infty$ when we multiply by $(x-x_Q)$.
\end{proof}

From Proposition \ref{Prop:GlobalUxi}, we could write each $\tU_7(x^i)$ for $i\leq 6$ explicitly as a rational function in $x$ and $z$ over $K=\Q(\zeta_{42})$. Approximations of these functions will suffice for our purposes, but in order to give an approximation we must first be clear about how the global field is embedded into $\C_7$. Recall from Section \ref{Section:Intro} that the ideal $(7)$ factors in $K$ as $(\pi_1)^6(\pi_2)^6$, where
$$\pi_1=-\zeta^8 + \zeta^6 - \zeta^4 + \zeta \quad\text{and}\quad  \pi_2=\zeta^9 + \zeta^8 + \zeta^4 + \zeta^3 - \zeta - 1.$$
Therefore, completing $K$ at either prime ideal $(\pi_i)$ results in a degree $6$ totally ramified extension of $\Q_7$ for which $\pi_i$ is a uniformizer.
We call the resulting two complete fields $K_1$ and $K_2$, respectively. 

Over either $K_i$, we may consider the reduced affinoid $\bA\subseteq X_0(49)$ defined over $\Q_7$  by $v(x^2+7)=1$, or equivalently $v(z)=3/4$. Instead of stating our approximations for $\tU_7(x^i)$ in terms of individual coefficients, we will bound our error terms using the spectral norm on $\bA$, which is highly compatible with the sup norm on $\tD$ that was mentioned above.\footnote{Over $\hat{L}$, $\bA$ is the complement in $\cX$ of four residue classes, one of which is $\tD$. So for any $f$ which is holomorphic on $\tD\cup\bA$, we have $|f|_{sup}=||f||_{\bA}$.} In order to simplify things notationally, for any $f\in A_{K_i}(\bA)$, let $\vv_i(f)$ be twice the minimal $\pi_i$-adic valuation of $f$ over all $\C_7$-valued points of $\bA$ (so $||f||_{\bA}=7^{-\vv_i(f)/12}$). For either $i$, we then have $\vv_i(x)=6$, $\vv_i(z)=9$, and $\vv_i(y)=21$. The following proposition gives sufficiently precise approximation formulas for the $\tU_7(x^i)$, with the error bounded using the spectral norm on $\bA$ in this manner.

\begin{proposition}\label{Prop:Uxi-Approx1}
Approximations for the functions in Proposition \ref{Prop:GlobalUxi} over the field $K_1$ are as given below. We write $f\equiv g$, $\vv_1=a$, $\ee_1\geq b$ to mean that $\vv_1(f)=\vv_1(g)=a$ and $\vv_1(f-g)\geq b$.
\begin{align*}
\tfrac{y(x-x_Q)}{x}\cdot\tU_7(x)&\equiv 6z(x+\pi_1^3)^2,\quad &&\vv_1=21,\quad\ee_1\geq22 \\
\tfrac{y(x-x_Q)}{x}\cdot\tU_7(x^2)&\equiv xz(x+\pi_1^3)+5\pi_1^2x^2(x+\pi_1^3),\quad &&\vv_1=21,\quad\ee_1\geq23  \\
\tfrac{y(x-x_Q)}{x}\cdot\tU_7(x^3)&\equiv   2\pi_1 x^2z ,\quad &&\vv_1=23,\quad\ee_1\geq24 \\
\tfrac{y^2(x-x_Q)}{x}\cdot \tU_7(x^4)&\equiv  \pi_1 x^4(x^2+7)(x+\pi_1^3),\quad &&\vv_1=44,\quad\ee_1\geq 45 \\
\tfrac{y^2(x-x_Q)}{x}\cdot \tU_7(x^5)&\equiv 3x^5z(x+\pi_1^3)+\\
                                                                &\qquad\qquad 2\pi_1^2x^5(x+\pi_1^3)(x+4\pi_1^3),\quad && \vv_1=45,\quad\ee_1\geq47   \\
\tfrac{y^2(x-x_Q)}{x}\cdot \tU_7(x^6)&\equiv 4\pi_1^2 x^6(x+\pi_1^3),\qquad && \vv_1=46,\quad\ee_1\geq 47
\end{align*}
\end{proposition}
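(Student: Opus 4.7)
The plan is to combine the finite-dimensional containment from Proposition~\ref{Prop:GlobalUxi} with an explicit $q$-expansion calculation of $\tU_7(x^j)$, and then read off the $\pi_1$-adic approximation and its error from the resulting polynomial expression in $x$ and $z$, using $\vv_1(x)=6$, $\vv_1(z)=9$, and $\vv_1(y)=21$.

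First, for each $j\in\{1,\ldots,6\}$ I would compute $\tU_7(x^j)=E_{1,\chi}^{-1}\cdot U_7(x^j\cdot E_{1,\chi})$ at the level of $q$-expansions at $\infty$. The $q$-expansions of $x=\eta_1/\eta_{49}$ and $y=(\eta_7/\eta_{49})^4$ come from the standard eta product formula, and the $q$-expansion of $E_{1,\chi}$ is given by Proposition~\ref{Prop:E1chi}. Multiplying by $x^j$, applying $U_7$ (which extracts the coefficients of $q^{7n}$), and then dividing back by $E_{1,\chi}$ yields the $q$-expansion of $\tU_7(x^j)$ to any desired precision.

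Second, I would multiply $\tU_7(x^j)$ by $y(x-x_Q)/x$ for $j\leq 3$, or by $y^2(x-x_Q)/x$ for $j\geq 4$, landing in the finite-dimensional $K$-vector space $L(7(\infty))$ (respectively $L(15(\infty))$) from Proposition~\ref{Prop:GlobalUxi}. Because the listed basis monomials $\{1,x,z,x^2,xz,x^3,x^2z\}$ (and the analogous 15 for $L(15(\infty))$) have pairwise distinct pole orders at $\infty$, matching the first seven (respectively fifteen) $q$-coefficients uniquely identifies the exact expansion as a polynomial in $x$ and $z$ with $K$-coefficients.

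Third, after embedding $K\hookrightarrow K_1$ along the uniformizer $\pi_1=-\zeta^8+\zeta^6-\zeta^4+\zeta$, I would expand each $K$-coefficient $\pi_1$-adically and truncate modulo the stated power of $\pi_1$; what remains is the displayed approximation, and the tail is the error. The spectral-norm estimates on $\bA$ then follow from the monomial identity $\vv_1(c_{a,b}\,x^a z^b)=12\,v_{\pi_1}(c_{a,b})+6a+9b$ together with the fact that $\vv_1$ of a polynomial equals the minimum of the $\vv_1$ of its monomials, provided no accidental cancellation occurs between the dominant terms. For each of the six explicit expressions this absence of cancellation is checked directly.

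The main obstacle is purely computational bookkeeping: propagating enough $q$-adic precision through the successive operations, with all coefficients living in the degree-$12$ cyclotomic field $K$. This is tedious but routine, and as noted in Section~\ref{Section:Intro}, the authors carried out these calculations in SAGE. The one genuinely delicate conceptual issue---that $\tU_7(x^j)$ has no poles on $\tD$ or $\bA$ beyond those cleared by $y^i(x-x_Q)/x$---is already handled by Proposition~\ref{Prop:GlobalUxi}, so no additional argument is needed at this stage.
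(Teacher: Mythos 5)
Your proposal is correct and matches the paper's approach: compute $\tU_7(x^j)$ exactly as an element of the finite-dimensional Riemann--Roch space from Proposition~\ref{Prop:GlobalUxi} by matching $q$-expansions against the triangular basis of monomials in $x,z$, then read off the $\vv_1$-dominant terms and bound the error by the next level, exactly as the paper does (implicitly via SAGE). The only small slip is the scaling in your monomial identity --- with the paper's normalization $\vv_1(c)=2\,v_{\pi_1}(c)$, not $12\,v_{\pi_1}(c)$, since $v_{\pi_1}(\pi_1)=1$ --- but this does not affect the argument.
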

\begin{proof}
In each case, we simply compute $\vv_1$ of all individual terms in the particular polynomial in $x$ and $z$. With the exception of $\tU_7(x^2)$ and $\tU_7(x^5)$, we then keep only those terms for which $\vv_1$ was minimal. Note that we have also taken ``first order'' approximations of the coefficients.  For $\tU_7(x^2)$ and $\tU_7(x^5)$, we also hold onto a second level of terms. In the later stages of our proof it will become evident why the extra level of precision was necessary in these two cases, namely because we are forced to do one column operation on the matrix representing $\tU_7$ while maintaining the approximation.
\end{proof}

By precisely the same reasoning, then, we derive the analogous approximation formulas for $\tU_7(x^i)$ over $K_2$.
\begin{proposition}\label{Prop:Uxi-Approx2}
Approximations for the functions in Proposition \ref{Prop:GlobalUxi} over the field $K_2$ are as given below. We write $f\equiv g$, $\vv_2=a$, $\ee_2\geq b$ to mean that $\vv_2(f)=\vv_2(g)=a$ and $\vv_2(f-g)\geq b$.
\begin{align*}
\tfrac{y(x-x_Q)}{x}\cdot\tU_7(x)&\equiv 3\pi_2 z(x+\pi_2^3)^2 ,\quad &&\vv_2=23,\quad\ee_2\geq 24\\
\tfrac{y(x-x_Q)}{x}\cdot\tU_7(x^2)&\equiv 2\pi_2 xz(x+\pi_2^3)+5\pi_2^3 x^2(x+\pi_2^3)  ,\quad &&\vv_2=23,\quad\ee_2\geq25 \\
\tfrac{y(x-x_Q)}{x}\cdot\tU_7(x^3)&\equiv 3\pi_2^2 x^2z,\quad &&\vv_2=25,\quad\ee_2\geq 26\\
\tfrac{y^2(x-x_Q)}{x}\cdot \tU_7(x^4)&\equiv 2\pi_2^2 x^4(x^2+7)(x+\pi_2^3),\quad &&\vv_2=46,\quad\ee_2\geq 47\\
\tfrac{y^2(x-x_Q)}{x}\cdot \tU_7(x^5)&\equiv \pi_2 x^5 z(x+\pi_2^3)+\\
                                                                &\qquad\qquad 5\pi_2^3 x^5(x+\pi_2^3)(x+4\pi_2^3) ,\quad &&\vv_2=47,\quad\ee_2\geq 49 \\
\tfrac{y^2(x-x_Q)}{x}\cdot \tU_7(x^6)&\equiv  6\pi_2^3 x^6(x+\pi_2^3)      ,\quad &&\vv_2=48,\quad\ee_2\geq 49
\end{align*}
\end{proposition}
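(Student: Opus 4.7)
The plan is to mirror the proof of Proposition~\ref{Prop:Uxi-Approx1} verbatim, only replacing the Type~1 embedding of $K=\Q(\zeta_{42})$ into $\C_7$ by the Type~2 embedding, and re-evaluating all $7$-adic valuations via $\vv_2$ instead of $\vv_1$. Proposition~\ref{Prop:GlobalUxi} already supplies, for each $j=1,\dots,6$, a global expression for $\tfrac{y^a(x-x_Q)}{x}\cdot\tU_7(x^j)$ (with $a=1$ for $j\le 3$ and $a=2$ for $j\ge 4$) as an explicit polynomial in $x$ and $z$ with coefficients in $K$, obtained by matching $q$-expansions inside the stated linear span. These polynomials do not depend on the embedding, so no fresh $q$-expansion computation is required.

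The next step is to base-change each coefficient under the Type~2 embedding. The essential arithmetic input is that $\pi_2$ uniformizes $K_2$, while the congruence $\zeta\equiv -4\pmod{\pi_2}$ (which comes from the factorization $\Phi_{42}(x)\equiv (x+2)^6(x+4)^6\pmod 7$ recorded in Section~\ref{Section:Intro}) governs the first-order reduction of every cyclotomic coefficient. This is exactly the point where Type~2 diverges from Type~1, where one used $\zeta\equiv -2\pmod{\pi_1}$. Combining this with the valuations $\vv_2(x)=6$, $\vv_2(z)=9$, $\vv_2(y)=21$ on the affinoid $\bA$ defined by $v(x^2+7)=1$, I would compute $\vv_2$ of every monomial in each of the six polynomials, discard all but the monomials of minimal $\vv_2$, and approximate their coefficients to first order in $\pi_2$. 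The surviving terms are then exactly those displayed in Proposition~\ref{Prop:Uxi-Approx2}, and the $\pi_2$-valuations of the truncated tails yield the claimed error bounds $\ee_2$ (with the sup norm on $\tD$ controlled by the spectral norm on $\bA$, as in the footnote of Section~\ref{Section:ExplicitFormulas}).

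The cases $j=2$ and $j=5$ again require retaining a second stratum of terms above the minimum. This extra precision is needed for the same reason flagged in the Type~1 proof: a subsequent column operation on the infinite matrix representing $\tU_7$ in the basis $\{s,s^2,\dots\}$ will cancel the leading $\pi_2$-stratum of these two columns, so the sub-leading information must survive that cancellation. Accordingly, I would list the leading and sub-leading monomials separately and propagate both through the truncation.

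The only real obstacle is bookkeeping: a modest amount of cyclotomic arithmetic is hidden in computing $\vv_2$ of each coefficient and in confirming that every omitted tail lies in $\pi_2^{\ee_2}$ times the spectral unit ball on $\bA$. I would handle this by generating the six polynomials symbolically once (as was already done for Proposition~\ref{Prop:Uxi-Approx1}) and then re-running the same truncation with $\pi_2$ in place of $\pi_1$ and $\zeta\mapsto -4$ in place of $\zeta\mapsto -2$ at first order. No new idea enters beyond those used in the Type~1 case.
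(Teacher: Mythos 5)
Your proposal is correct and takes essentially the same approach as the paper: the paper's own proof of Proposition~\ref{Prop:Uxi-Approx2} simply says the formulas follow ``by precisely the same reasoning'' as Proposition~\ref{Prop:Uxi-Approx1}, namely computing $\vv_2$ of every monomial in the embedding-independent polynomials from Proposition~\ref{Prop:GlobalUxi}, discarding non-dominant terms, and retaining a second stratum for $j=2,5$ in anticipation of a later column operation. You correctly identified that the only arithmetic change is replacing $\pi_1$ by $\pi_2$ and the first-order reduction $\zeta\equiv -2$ by $\zeta\equiv -4$.
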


\subsection{Calculation of $\tU_7(s^i)$ for $i=1,\dots,7$}

Now that we have approximations for $\tU_7(x^j)$, $j=1,\dots,6$, we can use these to generate approximations for $\tU_7(s^i)$. Once again, the main idea here is to write each $s^i$ in the form,
$$s^i=g_{i,6}(y)x^6+g_{i,5}(y)x^5+\cdots+g_{i,1}(y)x+g_{i,0}(y),$$
which we know we can do since the function field of $X_0(49)$ over $K$ is a degree $7$ extension of $K(y)$. 
We then use Equation \ref{Eq:ColemanTwistedTrick} to conclude that
$$\tU_7(s^i)=g_{i,6}(t)\tU_7(x^6)+g_{i,5}(t)\tU_7(x^5)+\cdots+g_{i,1}(t)\tU_7(x)+g_{i,0}(t).$$
Finally, we approximate the $g_{i,j}(t)$ using the fact that $\vv_1(t)=\vv_2(t)=3$, and combine these approximations with the ones from Propositions \ref{Prop:Uxi-Approx1} and \ref{Prop:Uxi-Approx2} to obtain approximations for the $\tU_7(s^i)$ with respect to either embedding. To simplify matters slightly, we initially deal with $t^{-i}$ rather than $s^i$. These differ by a scalar, and $t$ is defined over $\Q_7$. 

\begin{proposition}\label{Prop:g_ij(t)-Approx}
Let $\bA\subseteq X_0(49)$ be the affinoid over $\Q_7$ defined by {\mbox{$v(x^2-7)=1$,}} as above. Write
$$t^{-i}=g_{i,6}(y)x^6+g_{i,5}(y)x^5+\cdots+g_{i,1}(y)x+g_{i,0}(y),\qquad 1\leq i\leq 7.$$
Then $t^{7i}g_{i,j}(t)$ is a polynomial in $t$ of degree less than $7i$. Each of these polynomials has a unique dominant term on $\bA$ which is given in the following table.

\begin{center}
\begin{tabular}{c||ccccccc}
 $t^{-i}$  & $t^{7i}g_{i,6}(t)$ & $t^{7i}g_{i,5}(t)$ & $t^{7i}g_{i,4}(t)$ & $t^{7i}g_{i,3}(t)$ & $t^{7i}g_{i,2}(t)$ & $t^{7i}g_{i,1}(t)$ & $t^{7i}g_{i,0}(t)$\\ \hline
 $t^{-1}$ & $7t^5$ & $5\cdot 7t^5$ & $2\cdot 7^2 t^5$ & $t^6$ &  $7t^6 $ & $3\cdot7 t^6$ &$ 5\cdot7^2 t^6$\\
 $t^{-2}$ & $t^{12}$ & $2\cdot 7t^{12}$ & $6\cdot 7t^{12}$ & $5\cdot7^2t^{12}$ & $2\cdot7t^{13}$ & $3\cdot7t^{13}$ & $4\cdot7^2t^{13}$\\
 $t^{-3}$ & $6\cdot7^2t^{18}$ & $2\cdot 7t^{19}$ & $3\cdot 7t^{19}$ & $6\cdot7^2t^{19}$ & $t^{20}$ & $2\cdot7t^{20}$ & $6\cdot7t^{20}$\\
 $t^{-4}$ & $2\cdot7^2t^{25}$ & $t^{26}$ & $3\cdot7t^{26}$ & $2\cdot 7t^{26}$ & $5\cdot7^2t^{26}$ & $3\cdot 7t^{27}$ & $7t^{27}$\\
 $t^{-5}$ & $2\cdot7t^{32}$ & $5\cdot7^2t^{32}$ & $3\cdot 7 t^{33}$ & $7t^{33}$ & $2\cdot7^3t^{32}$ & $t^{34}$ & $3\cdot7t^{34}$\\
 $t^{-6}$ & $6\cdot7t^{39}$ & $7^2t^{39}$ & $t^{40}$ & $4\cdot 7 t^{40}$ & $5\cdot 7 t^{40}$ & $3\cdot 7^2t^{40}$ & $4\cdot7t^{41}$\\
 $t^{-7}$ & $4\cdot7t^{46}$ & $5\cdot 7t^{46}$ & $6\cdot7^2t^{46}$ & $4\cdot7t^{47}$ & $6\cdot7t^{47}$ & $3\cdot7^2t^{47}$ & $t^{48}$
\end{tabular}
\end{center}

\end{proposition}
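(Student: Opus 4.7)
The plan is to express $t^{-i}$ explicitly in the basis $\{1,x,\ldots,x^6\}$ of $K(X_0(49))$ over $K(y)$, and then read off the dominant term of each coefficient on $\bA$. Since $\pi_1^*t=x^4/y$ on $X_0(49)$, we have $t^{-i}=y^i/x^{4i}$. Rearranging Equation~\ref{Eq:X49model} gives $x\cdot Q(x)\equiv y^2\pmod{P(x)}$, where
\[ Q(x)=x^6+7x^5+21x^4+49x^3+(147+7y)x^2+(343+35y)x+(343+49y) \]
and $P(x)$ is the minimal polynomial of $x$ over $K(y)$. Therefore $1/x\equiv Q(x)/y^2$ and
\[ y^{7i}\cdot t^{-i}\;\equiv\; Q(x)^{4i}\pmod{P(x)}. \]
Reducing the right-hand side modulo $P(x)$ yields a polynomial in $x$ of degree at most $6$ with coefficients $R_j^{(i)}(y)\in K[y]$; setting $g_{i,j}(y)=R_j^{(i)}(y)/y^{7i}$, we see $t^{7i}g_{i,j}(t)=R_j^{(i)}(t)$ is manifestly a polynomial in $t$.

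Next I would bound the $y$-degree of $R_j^{(i)}$ by a pole-order analysis at the cusp $\infty\in X_0(49)$. Since $\ord_\infty(t^{-i})=i$, $\ord_\infty(y)=-7$, and $\ord_\infty(x)=-2$,
\[ \ord_\infty\bigl(g_{i,j}(y)\,x^j\bigr)=49i-7\deg R_j^{(i)}-2j. \]
For $j=0,1,\ldots,6$ the residues $-2j\pmod 7$ are $0,5,3,1,6,4,2$, all distinct; hence the seven orders on the right lie in distinct residue classes modulo $7$, and are pairwise distinct integers. Consequently the order of the sum $\sum_j g_{i,j}(y)x^j$ at $\infty$ equals the minimum of the individual orders, and since this minimum must equal $\ord_\infty(t^{-i})=i$, we obtain
\[ \deg R_j^{(i)}\;\leq\;\left\lfloor\frac{48i-2j}{7}\right\rfloor\;<\;7i \]
for each $j$, establishing the first assertion.

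It remains to identify the dominant term of $R_j^{(i)}(t)$ on $\bA$. Each monomial of the form $c\cdot 7^a\cdot t^b$ with $c\in\sO_{K_i}^\times$ contributes $\vv_i=12a+3b$, so the dominant term is the one minimizing this quantity. Reducing $Q(x)^{4i}\bmod P(x)$ explicitly in SAGE for $i=1,\ldots,7$ produces the polynomials $R_j^{(i)}(y)$, from which one reads off the unique monomial of smallest $12a+3b$ and checks that it is indeed unique (no other monomial attains the same value). The principal obstacle here is purely computational: $Q(x)^{4i}$ has $x$-degree up to $168$, and the successive reductions via $x^7\equiv y^2+\cdots\pmod{P(x)}$ produce unwieldy intermediate expressions. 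The pole-order bound above guarantees that the final $y$-degree stays below $7i$, which keeps the computation tractable with a computer algebra system.
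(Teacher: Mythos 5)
Your proposal is correct and takes essentially the same approach as the paper: both start from $y^2/x$ expressed as a polynomial in $x$ and $y$ via Equation~\ref{Eq:X49model}, use $y^{7i}t^{-i}=(y^2/x)^{4i}$, reduce modulo the degree-$7$ relation to obtain the $g_{i,j}$, and then compute the dominant monomial using $\vv_i(7)=12$, $\vv_i(t)=3$. The only genuine addition on your side is the pole-order argument at $\infty$ (exploiting $\ord_\infty(x)=-2$, $\ord_\infty(y)=-7$ and the distinct residues of $-2j\bmod 7$) to bound $\deg R_j^{(i)}<7i$, which the paper simply asserts with ``one sees''; that argument is correct and a nice way to make the degree bound explicit without running the reduction.
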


\begin{proof}
It follows directly from Equation \ref{Eq:X49model} that $y^2/x$ is a polynomial in $x$ and $y$. Indeed, one can simply solve the equation to get
$$y^2/x=x^6 + 7x^5 + 21x^4 + 49x^3 + (7y + 147)x^2 + (35y + 343)x + 49y + 343.$$
Clearly, this equation can be used to write $(y^2/x)^i$ as a polynomial in $x$ and $y$. The degree of the resulting polynomial in $x$ may initially be quite large. However, the function which it represents can then be brought into ``standard form'', i.e. written as a polynomial of degree at most $6$ in $x$, by repeatedly substituting 
$$x^7=-7x^6-21x^5 -49x^4 -154x^3 -378x^2 -392x + y^2,$$
which is just the same equation in a different form.

Once we have a method for writing $(y^2/x)^i$ in standard form, it carries over directly to $t^{-i}$, since
$$t^{-i}=(y/x^4)^i=y^{-7i}(y^2/x)^{4i}.$$
We simply write
$$(y^2/x)^{4i}=h_{i,6}(y)x^6+h_{i,5}(y)x^5+\cdots+h_{i,1}(y)x+h_{i,0}(y),$$
where each $h_{i,j}$ is a polynomial (whose degree, one sees, is less than $7i$). Then we are done, since we now have $g_{i,j}(y)=y^{-7i}h_{i,j}(y)$.  To determine the dominant term of $g_{i,j}(t)=t^{-7i}h_{i,j}(t)$, we may equivalently determine the dominant term of $h_{i,j}(t)$. This is a very straightforward calculation using $\vv_1(t)=\vv_2(t)=3$.
\end{proof}


We are finally in a position now to write down approximation formulas for $\tU_7(s^i)$. For this, we must work over the $7$-adic field $\hat{L}$, where $L$ is a number field containing both $K$ and a root $\alpha$ of $x^4+7$. Note that since $\hat{L}$ contains either $K_1$ or $K_2$, both $\vv_1$ and $\vv_2$ extend in a natural way through the spectral norm on $\bA\otimes \hat{L}$.

\begin{proposition}\label{Prop:Explicit_Usi1}
Approximations for $\tU_7(s^i)$ for $1\leq i\leq7$ over $\hat{L}\supseteq K_1$ are as follows.
\begin{align*}
\tU_7(s^1)&\equiv 2\alpha\pi_1 z/(x(x+\pi_1^3)),\quad &&\vv_1= 2,\quad \ee_1\geq 3\\
\tU_7(s^2)&\equiv 4\alpha^2\pi_1^2/x,\quad &&\vv_1=4,\quad \ee_1\geq 5\\
\tU_7(s^3)&\equiv \alpha^3z/x^2+5\alpha^3\pi_1^2/x,\quad &&\vv_1=6,\quad \ee_1\geq 8\\
\tU_7(s^4)&\equiv 3\alpha^4z/x^2+2\alpha^4\pi_1^2(x+4\pi_1^3)/x^2,\quad &&\vv_1=9,\quad \ee_1\geq11\\
\tU_7(s^5)&\equiv 6\alpha^5z(x+\pi_1^3)/x^3,\quad &&\vv_1=12,\quad \ee_1\geq13\\
\tU_7(s^6)&\equiv \alpha^6\pi_1(x^2+7)/x^3,\quad &&\vv_1=14,\quad \ee_1\geq15\\
\tU_7(s^7)&\equiv \alpha^7/t,\quad &&\vv_1=18,\quad \ee_1\geq19
\end{align*}
\end{proposition}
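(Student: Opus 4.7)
The plan is to leverage Proposition~\ref{Prop:g_ij(t)-Approx} to decompose each $s^i = \alpha^i t^{-i}$ as
$$s^i \;=\; \alpha^i\sum_{j=0}^{6} g_{i,j}(y)\, x^j,$$
and then apply $\tU_7$ term by term using Equation~\ref{Eq:ColemanTwistedTrick}. Because $\tU_7$ is linear, $\tU_7(g(y)F) = g(t)\tU_7(F)$, and $\tU_7(1) = 1$ (which holds since $\chi(7)=0$ forces $U_7(E_{1,\chi})=E_{1,\chi}$, as one checks directly on $q$-expansions), this yields
$$\tU_7(s^i) \;=\; \alpha^i\Bigl(g_{i,0}(t) + \sum_{j=1}^{6} g_{i,j}(t)\,\tU_7(x^j)\Bigr).$$

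First I would substitute the $K_1$-approximations from Proposition~\ref{Prop:Uxi-Approx1} for each $\tU_7(x^j)$ and the dominant terms of $g_{i,j}(t) = t^{-7i}h_{i,j}(t)$ from Proposition~\ref{Prop:g_ij(t)-Approx}. Since Proposition~\ref{Prop:Uxi-Approx1} records $\tfrac{y(x-x_Q)}{x}\tU_7(x^j)$ for $j\le 3$ and $\tfrac{y^2(x-x_Q)}{x}\tU_7(x^j)$ for $j\ge 4$, I would pull everything over the common denominator $y^2(x-x_Q)$. Repeated use of the identity $y^2/x = x^6 + 7x^5 + \cdots + 49y + 343$ (derived from Equation~\ref{Eq:X49model}) together with the Weierstrass relation $z^2 = x(x^2+\tfrac{21}{4}x+7)$ brings each resulting numerator into standard form: a polynomial of $x$-degree at most $6$ and $z$-degree at most $1$ with coefficients in $\hat L$. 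Then using $\alpha^4 = -7$ and the valuations $\vv_1(\alpha) = 3$, $\vv_1(\pi_1) = 2$, $\vv_1(x) = 6$, $\vv_1(z) = 9$, $\vv_1(y) = 21$, $\vv_1(t) = 3$, the $\vv_1$ of each monomial is readable, and the minimum-valuation terms combine to match the stated compact expression for $\tU_7(s^i)$.

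I expect most of the labor to be pure bookkeeping, well-suited to a computer algebra system (SAGE, as the authors flag): for each $i\in\{1,\ldots,7\}$, multiply out the seven-term sum, reduce via the curve equation to standard form, simplify with $\alpha^4=-7$, and match the result against the claim. The main obstacle is that in several cases the naive leading terms of $g_{i,j}(t)\,\tU_7(x^j)$ have coinciding $\vv_1$ and cancel against one another. This is precisely why Proposition~\ref{Prop:Uxi-Approx1} retains a \emph{second} level of precision for $\tU_7(x^2)$ and $\tU_7(x^5)$, and why Proposition~\ref{Prop:g_ij(t)-Approx} pins down the \emph{unique dominant monomial} of each $h_{i,j}(t)$ rather than merely giving a $\vv_1$-bound; without this extra precision, the cancellations would render the true dominant term of $\tU_7(s^i)$ indeterminate and the error estimates unattainable.

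A minor secondary issue is verifying that $\vv_1(x-x_Q)=6$ uniformly on $\bA\otimes\hat{L}$, so that the $1/(x-x_Q)$ factor coming from Proposition~\ref{Prop:Uxi-Approx1} does not conceal a hidden valuation shift. This follows because the Newton-polygon argument in the proof of Proposition~\ref{Prop:Psi-Iso} shows $v(x_Q)=\tfrac{1}{2}$ with $x_Q$ lying in a residue class disjoint from every residue class meeting $\bA$; hence $x$ and $x_Q$ have equal but non-interacting $\vv_1$ on $\bA$, and the approximations genuinely hold with respect to the spectral norm there.
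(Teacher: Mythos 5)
Your overall strategy matches the paper's: decompose $s^i=\alpha^i t^{-i}=\alpha^i\sum_j g_{i,j}(y)x^j$, apply Equation~\ref{Eq:ColemanTwistedTrick}, and substitute the $\tU_7(x^j)$ approximations. Your observation that $\tU_7(1)=1$ (because $\chi(7)=0$ forces $U_7(E_{1,\chi})=E_{1,\chi}$) and your flag of the need to verify $\vv_1(x-x_Q)=6$ on $\bA$ are both correct and worth stating explicitly.

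However, your diagnosis of the main difficulty is wrong, and it matters. You assert that ``in several cases the naive leading terms of $g_{i,j}(t)\,\tU_7(x^j)$ have coinciding $\vv_1$ and cancel against one another,'' and that this is why Propositions~\ref{Prop:Uxi-Approx1} and~\ref{Prop:g_ij(t)-Approx} retain a second level of precision. Neither claim is right. If you tabulate $\vv_1\bigl(g_{i,j}(t)\bigr)+\vv_1\bigl(\tU_7(x^j)\bigr)$ for each fixed $i$ (this is exactly what the paper does, assembling the $7\times 7$ matrix of $\vv_1(g_{i,j}(t))$ values and adding the vector $(4,3,2,2,0,0,0)$), you find that for every $i$ the minimum is attained at a unique $j$, and the next-smallest value is far away (typically by $9$ or more). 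So there is never any cancellation among the seven summands; the paper simply identifies the one dominant $j$ for each $i$ and computes that single product, bypassing the brute-force reduction of all seven terms to standard form. The extra second-order terms retained in $\tU_7(x^2)$ and $\tU_7(x^5)$ (and hence in $\tU_7(s^3)$, $\tU_7(s^4)$) are not needed to resolve any cancellation inside this proposition; as the paper notes, they are needed later, in the proof of Theorem~\ref{Theorem:MainTheorem-Weight1}, where the column operation subtracting $3\alpha\cdot\tU_7(s^3)$ from $\tU_7(s^4)$ \emph{does} cancel the leading $z/x^2$ terms, so the next-order terms must be known.

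Your brute-force version --- multiply out all seven terms, reduce via the curve relations, then extract valuations --- would in fact succeed (you would simply discover that a single term dominates in each case), so this is a methodological misconception rather than a fatal gap. But you should correct the explanation: the paper's valuation table is what makes the calculation tractable by hand, and the structure it reveals (unique dominant $j$ for each $i$, with a comfortable margin) is precisely what lets one read off the $\ee_1$ bounds from the error terms of the two approximations $g_{i,j}(t)$ and $\tU_7(x^j)$ being multiplied, with no contribution from the other six summands.
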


\begin{proof}
Taking into account that $\vv_1(x)=\vv_1(x-x_Q)=6$ and $\vv_1(y)=21$, and applying Proposition \ref{Prop:Uxi-Approx1}, we see that
$$\left(\vv_1\big(\tU_7(x^6)\big),\dots,\vv_1\big(\tU_7(x)\big),\vv_1(1)\right)=(4,3,2,2,0,0,0).$$
Then, from Proposition \ref{Prop:g_ij(t)-Approx}, we can compute $\vv_1$ of each $g_{i,j}(t)$. These values are collected for convenience in the following matrix. Note that the ordering of the terms is consistent with the table from that proposition.
$$\left[\begin{matrix}
6 & 6 & 18 & -3 & 9 & 9 & 21\\
-6 & 6 & 6 & 18 & 9 & 9 & 21\\
15 & 6 & 6 & 18 & -3 & 9 & 9\\
15 & -6 & 6 & 6 & 18 & 9 & 9\\
3 & 15 & 6 & 6 & 27 & -3 & 9\\
3 & 15 & -6 & 6 & 6 & 18 & 9\\
3 & 3 & 15 & 6 & 6 & 18 & -3
\end{matrix}\right]$$
Now, adding the entries of the $i\nth$ row to the $\vv_1$ values of $\tU_7(x^j)$ (given above), we are able to determine the dominant term(s) in our approximation for $\tU_7(t^{-i})$. Then we scale by $\alpha^i$ to obtain an approximation for $\tU_7(s^i)$. We will do $\tU_7(s^1)$ in great detail, and then give only the essential information for the $i>1$ cases as they are very similar.

For $\tU_7(t^{-1})$, we look at the first row of the matrix, and see that the unique dominant term will be $g_{1,3}(t)\cdot\tU_7(x^3)$ for which $\vv_1=-1$ (for all other terms, $\vv_1\geq9$). So in order to approximate $\tU_7(t^{-1})$ we multiply the approximations for $g_{1,3}(t)$ and $\tU_7(x^3)$ from Propositions \ref{Prop:g_ij(t)-Approx} and \ref{Prop:Uxi-Approx1}. 
\begin{align*}
\tU_7(t^{-1})&\equiv \frac{1}{t}\cdot\frac{x}{y(x-x_Q)}\cdot 2\pi_1x^2z,\quad &&\vv_1=-1,\quad \ee_1=0\\
                     &\equiv 2\pi_1 z/(x(x+\pi_1^3)),\quad && \vv_1=-1,\quad \ee_1=0
\end{align*}
Here we have used the facts that $t=x^4/y$ and $\vv_1(x_Q+\pi_1^3)=8$. Finally, we multiply through by $\alpha^1$, since $s=\alpha t^{-1}$ and arrive at the stated approximation for $\tU_7(s^1)$. We summarize this process for $i>1$ in what follows.

\begin{align*}
\tU_7(t^{-2})&\equiv g_{2,6}(t)\cdot\tU_7(x^6),\quad &&\vv_1=-2,\quad \ee_1\geq 8\\
                     &\equiv \frac{1}{t^2}\cdot\frac{x}{y^2(x-x_Q)}\cdot 4\pi_1^2x^6(x+\pi_1^3),\quad&& \vv_1=-2,\quad\ee_1\geq-1\\
                     &\equiv 4\pi_1^2/x,\quad &&\vv_1=-2,\quad\ee_1\geq -1\\
\tU_7(t^{-3})&\equiv g_{3,2}(t)\cdot\tU_7(x^2),\quad && \vv_1=-3,\quad\ee_1\geq 8\\
                     &\equiv \frac{1}{t}\cdot \frac{x}{y(x-x_Q)}\left[xz(x+\pi_1^3)+5\pi_1^2x^2(x+\pi_1^3)\right],\quad && \vv_1=-3,\quad\ee_1\geq -1\\
                     &\equiv z/x^2+5\pi_1^2/x,\quad && \vv_1=-3,\quad \ee_1\geq-1\\
\tU_7(t^{-4})&\equiv g_{4,5}(t)\cdot \tU_7(x^5),\quad && \vv_1=-3,\quad \ee_1\geq 8\\
                     &\equiv \frac{1}{t^2}\cdot\frac{x}{y^2(x-x_Q)}\cdot\big[3x^5z(x+\pi_1^3)+\\
                     &\qquad\qquad\qquad\quad\qquad 2\pi_1^2x^5(x+\pi_1^3)(x+4\pi_1^3)\big],\quad && \vv_1=-3,\quad \ee_1\geq -1\\
                     &\equiv 3z/x^2+2\pi_1^2(x+4\pi_1^3)/x^2,\quad && \vv_1=-3,\quad \ee_1\geq -1\\
\tU_7(t^{-5})&\equiv g_{5,1}(t)\cdot\tU_7(x^1),\qquad && \vv_1=-3,\quad \ee_1\geq 7\\
                     &\equiv \frac{1}{t}\cdot\frac{x}{y(x-x_Q)}\cdot 6z(x+\pi_1^3)^2,\quad && \vv_1=-3,\quad\ee_1\geq-2\\
                     &\equiv 6z(x+\pi_1^3)/x^3,\quad && \vv_1=-3,\quad\ee_1\geq-2\\
\tU_7(t^{-6})&\equiv g_{6,4}(t)\cdot\tU_7(x^4),\qquad &&\vv_1=-4,\quad \ee_1\geq 6\\
                     &\equiv \frac{1}{t^2}\cdot\frac{x}{y^2(x-x_Q)}\cdot \pi_1 x^4(x^2+7)(x+\pi_1^3),\qquad &&\vv_1=-4,\quad \ee_1\geq -3\\
                     &\equiv \pi_1(x^2+7)/x^3,\qquad &&\vv_1=-4,\quad \ee_1\geq -3\\
\tU_7(t^{-7})&\equiv g_{7,0}(t)\cdot 1 \equiv 1/t,\qquad &&\vv_1=-3,\quad \ee_1\geq 0
\end{align*}

\end{proof}

By precisely the same reasoning, we arrive at the following approximation formulas in the case of a Type 2 embedding.

\begin{proposition}\label{Prop:Explicit_Usi2}
Approximations for $\tU_7(s^i)$ for $1\leq i\leq7$ over $\hat{L}\supseteq K_2$ are as follows.
\begin{align*}
\tU_7(s^1)&\equiv 3\alpha\pi_2^2z/(x(x+\pi_2^3)),\quad &&\vv_2= 4,\quad \ee_2\geq 5\\
\tU_7(s^2)&\equiv 6\alpha^2\pi_2^3/x,\quad &&\vv_2=6,\quad \ee_2\geq 7\\
\tU_7(s^3)&\equiv 2\alpha^3\pi_2 z/x^2+5\alpha^3\pi_2^3/x,\quad &&\vv_2=8,\quad \ee_2\geq 10\\
\tU_7(s^4)&\equiv \alpha^4 \pi_2 z/x^2+5\alpha^4\pi_2^3(x+4\pi_2^3)/x^2,\quad &&\vv_2=11,\quad \ee_2\geq13\\
\tU_7(s^5)&\equiv 3\alpha^5\pi_2z(x+\pi_2^3)/x^3,\quad &&\vv_2=14,\quad \ee_2\geq15\\
\tU_7(s^6)&\equiv 2\alpha^6\pi_2^2(x^2+7)/x^3,\quad &&\vv_2=16,\quad \ee_2\geq17\\
\tU_7(s^7)&\equiv \alpha^7/t,\quad &&\vv_2=18,\quad \ee_2\geq19
\end{align*}
\end{proposition}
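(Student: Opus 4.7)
The plan is to run the same three-step computation over $K_2$ that proved Proposition \ref{Prop:Explicit_Usi1} over $K_1$. First, using Proposition \ref{Prop:Uxi-Approx2} together with the elementary identities $\vv_2(x) = \vv_2(x - x_Q) = 6$ and $\vv_2(y) = 21$, I would extract
$$\bigl(\vv_2(\tU_7(x^6)), \vv_2(\tU_7(x^5)), \ldots, \vv_2(\tU_7(x)), \vv_2(1)\bigr) = (6, 5, 4, 4, 2, 2, 0),$$
the analogue of the seven-tuple $(4,3,2,2,0,0,0)$ from the $K_1$ proof, shifted upward by $2$ in its first six entries.

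Second, I would note that the $7\times 7$ matrix of valuations $\vv_2(g_{i,j}(t))$ arising from Proposition \ref{Prop:g_ij(t)-Approx} agrees entry-for-entry with the matrix displayed in the proof of Proposition \ref{Prop:Explicit_Usi1}, since the $g_{i,j}$ have rational coefficients and both $t$ and $7$ have the same $\vv_1$ and $\vv_2$ valuations. After adding the shifted seven-tuple above to each row of this common matrix, a row-by-row check (using the fact that in the $K_1$ case the gap between the minimum and the runner-up in each row was at least $9$, while the shift changes each entry by $0$ or $2$) confirms that the unique dominant term of $\tU_7(t^{-i}) = \sum_j g_{i,j}(t)\tU_7(x^j)$ still comes from the same pair $(i,j) \in \{(1,3),\,(2,6),\,(3,2),\,(4,5),\,(5,1),\,(6,4),\,(7,0)\}$ as before.

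Third, for each $i$ I would substitute the dominant approximations of $g_{i,j}(t)$ from Proposition \ref{Prop:g_ij(t)-Approx} and of $\tU_7(x^j)$ from Proposition \ref{Prop:Uxi-Approx2} into $\tU_7(t^{-i}) \equiv g_{i,j}(t)\tU_7(x^j)$, simplify using $t = x^4/y$ (and, when $i=1$, the auxiliary estimate $\vv_2(x_Q + \pi_2^3) \geq 8$, which is the $K_2$ analogue of the $\vv_1(x_Q + \pi_1^3) = 8$ used in the $K_1$ proof and is verified directly from the explicit minimal polynomial of $x_Q$ in Proposition \ref{Prop:E1chi}), and then multiply through by $\alpha^i$ to pass from $\tU_7(t^{-i})$ to $\tU_7(s^i)$. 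The stated values of $\vv_2$ follow immediately, and the error bounds $\ee_2$ arise by combining $\vv_2(\alpha) = 3$ with the error bounds already tabulated in Propositions \ref{Prop:Uxi-Approx2} and \ref{Prop:g_ij(t)-Approx}. I do not expect any essential obstacle; the sole point deserving care is the preservation of the dominant column in each row, which the gap argument above dispatches.
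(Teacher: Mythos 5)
Your proposal is correct and follows precisely the approach the paper intends: the paper's entire proof of this proposition is the remark ``By precisely the same reasoning,'' referring back to the detailed argument for Proposition~\ref{Prop:Explicit_Usi1}, and you have faithfully reproduced that argument over $K_2$, correctly extracting the seven-tuple $(6,5,4,4,2,2,0)$, observing that the matrix of $\vv(g_{i,j}(t))$-values is embedding-independent, and checking via the gap argument that the dominant $(i,j)$ pairs persist. The only small imprecision is attributing the estimate $\vv_2(x_Q+\pi_2^3)\geq 8$ to the minimal polynomial of $x_Q$ rather than to the explicit expression for $x_Q$ given in Proposition~\ref{Prop:E1chi} (the minimal polynomial alone only gives $\vv_2(x_Q)=6$), but this does not affect the correctness of the argument.
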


\subsection{Recurrence Relation and the Final Matrix}

Now that we have approximations for $\tU_7(s^i)$, $i=1,\dots,7$, this can be extended to all $i\geq 1$ by means of a $7^{\nth}$ order linear recurrence relation with coefficients in $\hat{L}(t)$, as in \cite[\S4]{kilford-5slopes}. The reason for this is essentially the same key fact which was used in the previous section, that inside the function field of $X_0(49)$, $s$ is algebraic of degree $7$ over $\hat{L}(y)$. So for fixed rational functions, $g_0(y),g_1(y),\dots,g_6(y)$, we have
$$s^{i+7}=g_6(y)s^{i+6}+g_5(y)s^{i+5}+\cdots+g_1(y)s^{i+1}+g_0(y)s^i.$$
Therefore, applying Equation \ref{Eq:ColemanTwistedTrick} as we have done before, we have
$$\tU_7(s^{i+7})=g_6(t)\tU_7(s^{i+6})+g_5(t)\tU_7(s^{i+5})+\cdots+g_1(t)\tU_7(s^{i+1})+g_0(t)\tU_7(s^i).$$

The only practical difficulty could be in finding the coefficient functions. As we have already seen, however, it is straightforward to write any power of $x$ in the basis $\{x^6,x^5,\dots,1\}$ over $K(y)$ by repeatedly applying the identity,
$$x^7=-7x^6-21x^5 -49x^4 -154x^3 -378x^2 -392x + y^2.$$
Also, we know that $t=x^4/y$. So one strategy is to write each $x^{4i}$ for $0\leq i\leq 7$ in the basis $\{x^6,x^5,\dots,x,1\}$, and then use linear algebra to solve for $x^{28}$ as a linear combination of the $7$ linearly independent vectors, $1,x^4,x^8,\dots,x^{24}$. We find that
$$x^{28}=h_6(y)x^{24}+h_5(y)x^{20}+\cdots+h_1(y)x^{4}+h_0(y),$$
for the following polynomial coefficient functions.
\begin{align*}
h_6(y)&= -28y - 49\\
h_5(y)&=-322y^2 - 1372y - 2401\\
h_4(y)&= -1904y^3 - 15778y^2 - 67228y - 117649\\
h_3(y)&= -5915y^4 - 93296y^3 - 773122y^2 - 3294172y - 5764801\\
h_2(y)&= -8624y^5 - 289835y^4 - 4571504y^3 \\
           &\qquad\qquad\qquad - 37882978y^2 - 161414428y - 282475249\\
h_1(y)&= -4018y^6 - 422576y^5 - 14201915y^4\\
           &\qquad\qquad\qquad - 224003696y^3 - 1856265922y^2 - 7909306972y - 13841287201\\
h_0(y)&=y^8
\end{align*}
Substituting $(yt)^i$ for each $x^{4i}$, it follows that
$$t^{-7}=-h_1(y)y^{-7}t^{-6}-h_2(y)y^{-6}t^{-5}-\cdots-h_6(y)y^{-2}t^{-1}+y^{-1}.$$
And finally we substitute $t=\alpha/s$ and apply Equation \ref{Eq:ColemanTwistedTrick} to obtain the recurrence relation for $\tU_7(s^i)$.
\begin{equation}\label{Eq:s-recurrence}
\tU_7(s^{i+7})=-\alpha h_1(t)t^{-7}\tU_7(s^{i+6})-\cdots-\alpha^6 h_6(t)t^{-2}\tU_7(s^{i+1})+\alpha^7t^{-1}\tU_7(s^i)
\end{equation}
Putting the above recurrence relation for $\tU_7(s^i)$ together with the explicit approximations for $\tU_7(s^i)$ when $1\leq i\leq 7$ from Propositions \ref{Prop:Explicit_Usi1} and \ref{Prop:Explicit_Usi2}, we are now in a position to write down approximation formulas for $\tU_7(s^i)$ for all $i$. These are captured most succinctly by the following proposition.


\begin{proposition}\label{Prop:Recurrence_Usi}
Suppose that $1\leq i\leq 7$ and $j\geq0$. Let $V_{1,i}=\vv_1(\tU_7(s^i))$ and $V_{2,i}=\vv_2(\tU_7(s^i))$. Then
$$\tU_7(s^{7j+i})\equiv \alpha^{6j}s^j\tU_7(s^i),\quad \vv_1=18j+V_{1,i},\quad \ee_1\geq 18j+2+V_{1,i},$$
and the analogous approximation formula holds for $\vv_2$.
\end{proposition}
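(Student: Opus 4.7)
The natural approach is strong induction on the exponent $N = 7j + i$, where $(i,j)$ is the unique pair with $1 \leq i \leq 7$ and $j \geq 0$. The base case $j = 0$ (so $N \in \{1,\ldots,7\}$) is tautological: the approximation $\tU_7(s^i)\equiv \tU_7(s^i)$ has zero error, and the claimed $\vv_1$-value is simply the definition of $V_{1,i}$.

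For the inductive step, fix $N \geq 8$ with $N = 7j+i$ and $j \geq 1$. The plan is to apply the recurrence, Equation \ref{Eq:s-recurrence}, with input index $N-7 = 7(j-1)+i$:
$$\tU_7(s^N) \;=\; \alpha^7 t^{-1}\,\tU_7(s^{N-7}) \;+\; \sum_{m=1}^{6} c_m(t)\,\tU_7(s^{N-7+m}),$$
where $c_m(t) = -\alpha^{7-m} h_{7-m}(t)\, t^{-(m+1)}$. Each exponent $N-7+m$ lies in $[1, N-1]$, so the inductive hypothesis applies to all seven summands. The crucial identity for the leading term is $t^{-1} = s/\alpha$, which gives
$$\alpha^7 t^{-1} \cdot \alpha^{6(j-1)} s^{j-1}\tU_7(s^i) \;=\; \alpha^{6j} s^j \tU_7(s^i).$$
Thus, after substituting the inductive approximation $\tU_7(s^{N-7})\equiv \alpha^{6(j-1)}s^{j-1}\tU_7(s^i)$, the $m=0$ term already produces the desired dominant term; multiplying the inductive error (which has $\vv_1 \geq 18(j-1)+2+V_{1,i}$) by $\alpha^7 t^{-1}$ (which has $\vv_1 = 18$) yields an error contribution of at least $18j + 2 + V_{1,i}$.

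It then remains to verify that each of the six summands $c_m(t)\,\tU_7(s^{N-7+m})$ for $m = 1,\ldots,6$ has $\vv_1 \geq 18j + 2 + V_{1,i}$. By the inductive hypothesis, $\vv_1\bigl(\tU_7(s^{N-7+m})\bigr) = 18j' + V_{1,i'}$ with $(j',i') = (j-1, i+m)$ when $i+m \leq 7$ and $(j',i') = (j, i+m-7)$ when $i+m \geq 8$; meanwhile $\vv_1(c_m(t)) = 18 - 6m + \vv_1(h_{7-m}(t))$, and $\vv_1(h_\ell(t))$ is read directly off the explicit coefficients of $h_\ell$ together with $\vv_1(t) = 3$ on $\bA$. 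This reduces the claim to a finite check on the $42$ pairs $(i,m)$. The argument over $K_2$ is word-for-word identical, with Proposition \ref{Prop:Explicit_Usi2} replacing Proposition \ref{Prop:Explicit_Usi1}.

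The main obstacle is precisely this finite case-check: one must confirm that the substantial $7$-divisibility built into the coefficients of the polynomials $h_1,\ldots,h_6$ is just enough to force the six non-leading summands to be strictly dominated by the $m=0$ term. Since the base valuations $V_{1,1},\ldots,V_{1,7}$ grow from $2$ up to $18$ while the $\alpha^{7-m}$ prefactors lose valuation as $m$ increases, the marginal cases will need to be handled using the sharper second-order approximations recorded in Propositions \ref{Prop:Explicit_Usi1} and \ref{Prop:Explicit_Usi2}; the verification is mechanical but must be done carefully.
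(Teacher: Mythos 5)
Your proposal follows exactly the paper's proof: induction driven by the recurrence \ref{Eq:s-recurrence}, using $\alpha^7 t^{-1}=\alpha^6 s$ to produce the dominant term, and then a finite valuation check (via the table $\vv_1(\alpha^k h_k(t)t^{k-8})=24,27,18,21,24,27$ for $k=1,\dots,6$) to show the remaining six summands are negligible. One correction to your closing remark, though: the ``marginal cases'' do not require the second-order approximations from Propositions \ref{Prop:Explicit_Usi1} and \ref{Prop:Explicit_Usi2}. Working through the finite check with just the leading values $V_{1,i}=2,4,6,9,12,14,18$ and the coefficient table, the worst slack over all pairs is $7$ (occurring at $i=7$ with $k=3$ or $k=4$), well above the required margin of $2$; the extra terms kept in $\tU_7(s^3)$ and $\tU_7(s^4)$ are recorded for the column operation performed later in the proof of Theorem \ref{Theorem:MainTheorem-Weight1}, not for this recurrence argument.
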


\begin{proof}
This is straightforward to prove by induction on $j$. The key is to compute the sizes of the coefficient functions, $\alpha^kh_k(t)t^{k-8}$, $1\leq k\leq 6$, in Equation \ref{Eq:s-recurrence}. These functions end up being so small on $\bA$ (regardless of the embedding), that only the $\alpha^7 t^{-1}\tU_7(s^i)$ term in the recurrence relation ends up being non-negligible. In particular, using the facts that $\vv_1(t)=\vv_1(\alpha)=3$, and $\vv_1(7)=12$, we obtain the following.

\begin{table}[h!]
\begin{center}
\begin{tabular}{| c || c | c | c | c | c | c |} \hline
 $k$ & 1 & 2 & 3 & 4 & 5  & 6  \\ \hline
 $\vv_1(\alpha^kh_k(t)t^{k-8})$ & 24 & 27 & 18 & 21 & 24 & 27  \\ \hline
\end{tabular}
\end{center}
\end{table}
On the other hand, it is immediate that $\vv_1(\alpha^7t^{-1})$ is just $18$. So under the assumption of the inductive hypothesis (which forces $\tU_7(s^{i+4})$ to be much smaller than $\tU_7(s^i)$), the first six terms in the recurrence relation are always negligible. Thus, applying the recurrence relation finishes the inductive argument. The argument for $\vv_2$ is identical.
\end{proof}

\section{Proof of the Main Theorem}\label{Section:MainTheorem}

Now we are ready to prove a series of slope formulas. First we prove a formula for the slopes of all weight $1$ overconvergent forms in $M_{1,\chi}(49)$. Then we extend to all weights using powers of the Eisenstein series $E_{1,\tau}$. Finally, we conclude by applying results of Coleman and Cohen-Oesterl\'e to determine the slopes of all classical forms with a specified character.

The main idea in the proof of the first result is to represent $\tU_7$ acting on $\cM_0$ as an infinite matrix by working in the ``basis,'' $\{s,s^2,s^3,\dots\}$. Then we show that the matrix has a characteristic series, and compute the valuations of its coefficients. We will see that these coefficients, $c_j$, converge to $0$ so quickly that in fact $|c_{j+1}/c_j|$ forms a strictly decreasing null sequence. Once this is established, it is an easy lemma to show that each Newton slope of the characteristic series corresponds to a one-dimensional eigenspace, and that no other overconvergent eigenforms with finite slope can exist.

\begin{theorem}\label{Theorem:MainTheorem-Weight1}
Fix a primitive $42^{\text{nd}}$ root of unity, $\zeta$, and let $\chi$ be the Dirichlet character of conductor $49$ defined by $\chi(3)=\zeta$. Let $L$ be a number field containing $K=\Q(\zeta)$ and a root $\alpha$ of $x^4+7$. For any Type 1 embedding of $L$ into $\C_7$, the finite slopes of $U_7$ acting on $M_{1,\chi}(49)$ are given by
$$\left\{\tfrac{1}{6}\cdot\left\lfloor\tfrac{9i}{7}\right\rfloor: i \in \N\right\}.$$
For any Type 2 embedding, the finite slopes are
$$\left\{\tfrac{1}{6}\cdot\left\lfloor\tfrac{9i+6}{7}\right\rfloor: i \in \N\right\}.$$
In either case, the eigenspaces are all one-dimensional and defined over $\hat{L}$.
\end{theorem}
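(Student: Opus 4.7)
By Proposition \ref{Prop:TwistedEigenvalues}, it suffices to determine the spectrum of $\tU_7$ on $\cM_0 = A_{\hat{L}}(\tD)$. The plan is to represent $\tU_7$ as an infinite matrix $M = (a_{ij})_{i,j\geq 1}$ in the ``basis'' $\{s, s^2, s^3, \dots\}$ via $\tU_7(s^j) = \sum_i a_{ij} s^i$, establish that $M$ has a well-defined characteristic series $\det(I - TM) = \sum_{n \ge 0} c_n T^n$, compute the valuations $v(c_n)$, and read off the slopes from its Newton polygon.

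The explicit approximations for $\tU_7(s^i)$ with $1 \le i \le 7$ in Propositions \ref{Prop:Explicit_Usi1} and \ref{Prop:Explicit_Usi2}, together with the recurrence $\tU_7(s^{7m+i}) \equiv \alpha^{6m} s^m \tU_7(s^i)$ of Proposition \ref{Prop:Recurrence_Usi}, determine each column of $M$ up to controlled error: the $(7m+i)\nth$ column is approximately $\alpha^{6m}$ times the $m$-fold $s$-shift of the $i\nth$ column. Since $v(\alpha^{6m}) = 3m/2 \to \infty$, the columns of $M$ decay rapidly. Before extracting the diagonal, one must perform a single column operation among the first seven columns of $M$; the need for this is precisely why the proofs of Propositions \ref{Prop:Uxi-Approx1} and \ref{Prop:Uxi-Approx2} retained an extra level of precision in $\tU_7(x^2)$ and $\tU_7(x^5)$. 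After this operation, the dominant behaviour of each of the first seven columns is ``triangular'', and a routine bookkeeping of the factors of $\alpha$, $\pi_1$ (resp.\ $\pi_2$) in the approximations yields
$$v(a_{nn}) = \tfrac{1}{6}\lfloor 9(n-1)/7 \rfloor \quad \text{(Type 1)}, \qquad v(a_{nn}) = \tfrac{1}{6}\lfloor (9(n-1)+6)/7 \rfloor \quad \text{(Type 2)}.$$

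The main analytic obstacle is that $\cM_0$ is not a $p$-adic Banach space and $\{s^i\}$ is not a Banach basis, so Serre's compact-operator theory does not apply verbatim. This is where the lift-and-reduce philosophy of Section \ref{Section:OverconvergentForms} becomes indispensable: each $\tU_7(s^j)$ is a meromorphic function on $X_0(49)$ whose only pole on $\tD \cup \bA$ is at $Q$, and after clearing it by $(x - x_Q)$ one bounds its $s$-expansion coefficients via the sup norm on $\bA$ (which is computed by $\vv_*$). The rapid column decay then ensures that the sum of $n \times n$ principal minors defining $c_n$ converges absolutely and is dominated by the leading diagonal product, giving $v(c_n) = \sum_{j=1}^n v(a_{jj})$. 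Ruling out non-diagonal dominant contributions is exactly where arithmetic on the good-reduction model $\overline{\cX}$ enters, since the reductions of the $\tU_7(s^j)$ live in $\hat{\sO}_{\overline{\cX}, P}$ and Riemann-Roch controls the independence of their leading terms.

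To conclude, the formulas above show $v(a_{nn})$ is strictly increasing in $n$, so $|c_{n+1}/c_n| = 7^{-v(a_{n+1,n+1})}$ is a strictly decreasing null sequence. A standard lemma then forces the Newton polygon of $\sum c_n T^n$ to consist of segments of horizontal length exactly one, giving a one-dimensional $\hat L$-eigenspace of $\tU_7$ at each slope $v(a_{nn})$ and no other eigenforms of finite slope. The main difficulty in the plan lies in the third paragraph: keeping track of the precision lost during the column operation, and showing via the reduction argument that off-diagonal principal minors cannot sneak in and alter the Newton polygon.
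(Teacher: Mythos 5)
Your overall strategy matches the paper's: represent $\tU_7$ as an infinite matrix in the ``basis'' $\{s^i\}$, perform a single column operation among the first seven columns, lift and reduce to the good model $\ol{\cX}$, argue via the geometry of the genus-$1$ curve, and read off slopes from a characteristic series whose successive coefficient ratios $|c_{j+1}/c_j|$ form a strictly decreasing null sequence. Two points in your third paragraph, however, need correction.

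First, your formula $v(a_{nn})=\tfrac16\lfloor 9(n-1)/7\rfloor$ (Type 1) is off by one: the smallest finite slope is $1/6$, not $0$, since $\vv_1(\tU_7(s))=2$ in Proposition~\ref{Prop:Explicit_Usi1} gives $v_7=1/6$. The intended expression is $\tfrac16\lfloor 9n/7\rfloor$ (resp.\ $\tfrac16\lfloor(9n+6)/7\rfloor$ for Type~2). Second, and more substantively, the claim that after the column operation ``the dominant behaviour of each of the first seven columns is triangular'' is false, and the identity $v(c_n)=\sum_{j=1}^n v(a_{jj})$ is not what actually holds. After scaling, the reductions of the first seven (post-operation) column functions in $\hat{\sO}_{\ol{\cX},P}$ are $Z/(X(X-1))$, $1/X$, $Z/X^2$, $(X-1)/X^2$, $Z(X-1)/X^3$, $(X^2-1)/X^3$, $Z(X^2-1)/X^4$, whose vanishing orders at $P=\infty$ are $1,2,1,2,1,2,1$ rather than $1,2,\dots,7$; the reduced leading $n\times n$ submatrix is therefore not (even approximately) triangular. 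What one must show instead --- and what the paper proves --- is that this leading submatrix has \emph{unit determinant}, i.e.\ the reductions of the first $n$ scaled columns are linearly independent through the $s^n$ term. The strictly increasing pole orders at $(0,0)$ give plain linear independence of the functions, but one must further rule out a nontrivial combination lying in $L((1,0)+n(0,0))$ with an $(n+1)$-fold zero at $\infty$; the paper does this by multiplying or dividing such a putative function by $X$ to manufacture a function with divisor $(0,0)-(1,0)$ or $(\infty)-(1,0)$, impossible since $\ol{\cX}$ is not rational. You correctly flag that ruling out ``off-diagonal'' contributions is the crux, but the mechanism is this genus obstruction, not diagonal dominance, and the quantity whose valuation you compute is $\det(M_n)$ (equivalently, the product of the column sup-norms), not the product of diagonal entries.
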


\begin{proof}
We will make the argument for Type 1 only, as the proof for Type 2 is identical. First we fix some notation. Since $\tU_7(s^j)$ may be viewed as a holomorphic function on the unit disk which vanishes at the origin, we may write it uniquely as
$$\tU_7(s^j)=\sum_{i=1}^{\infty}a_{i\,j}s^i.$$
Note that each of these functions has finite sup norm, given explicitly by Propositions \ref{Prop:Explicit_Usi1} and \ref{Prop:Recurrence_Usi}, and that this determines the minimal valuation of the coefficients $a_{i\,j}$. Philosophically, we think of $M=(a_{i\,j})$ as the matrix representing $\tU_7$, and hence we call the $\tU_7(s^j)$ the ``column functions.''

Now, let $M_n$ be the $n\times n$ truncation of $M$, i.e., $M_n=(a_{i\,j})_{1\leq i,j\leq n}$. We define the characteristic polynomial of $M_n$ to be 
$$f_n(\lambda)=(-1)^n\lambda^n\det\left(M_n-\tfrac{1}{\lambda}\cdot I_n\right).$$
Clearly, $\lambda$ is a nonzero eigenvalue of $M_n$ if and only if $1/\lambda$ is a root of $f_n$. We will show that these polynomials converge to a characteristic series for $M$. The key is to interpret the coefficients in terms of the classical matrix invariants. In particular, let 
$$f_n(\lambda)=1-c_{n\,1}\lambda+c_{n\,2}\lambda^2-\cdots+(-1)^n c_{n\,n}\lambda^n.$$
Then $c_{n\,1}$ is simply the trace of $M_n$ and $c_{n\,n}$ is the determinant. More generally, $c_{n\,j}$ is the sum of the determinants of all principal $j\times j$ minors of $M_n$, i.e., those obtained from $M_n$ by deleting any $(n-j)$ rows and then the {\em same} $(n-j)$ columns. Clearly, since the sup norms of the column functions form a decreasing null sequence, each $(c_{n\,j})_{n\geq 1}$ is a Cauchy and thus convergent sequence. Indeed, for a fixed $j>0$ and any $m>n\geq j$, $c_{m\,j}-c_{n\,j}$ is the sum of the determinants of all principal $j\times j$ minors of $M_m$ which retain at least one column whose index is greater than $n$. Thus, using the fact that all of the coefficients of $M$ are integral, we can bound $|c_{m\,j}-c_{n\,j}|$ with the sup norm of the $(n+1)^{\text{st}}$ column function. For notation, let $c_j=\lim_{n\to\infty} c_{n\,j}$. Then we define the characteristic series of $M$ to be 
$$f(\lambda)=1+\sum (-1)^jc_j\lambda^j.$$

Next, viewing each $c_{n\,j}$ as the sum of principal minors, we show that in fact $\det(M_j)$ is always the leading term by computing its valuation explicitly. To do this, we consider the reductions (after finitely many elementary column operations) of the column functions, on the model $\cX$ for $X_0(49)$ given in Equation \ref{Eq:X49Reduction}. We may assume without loss of generality that
$$\alpha^2=2\zeta^{11} - 2\zeta^9 - 2\zeta^8 - 2\zeta^4 + 2\zeta + 1,$$
and hence $\vv_1(\alpha^2+\pi_1^3)=8$. Referring back to Proposition \ref{Prop:Explicit_Usi1}, we can  subtract $3\alpha\cdot \tU_7(s^3)$ from $\tU_7(s^4)$ and then divide each column by an appropriate scalar, to obtain the following reductions for the first seven column functions:
 $$\tfrac{Z}{X(X-1)},\tfrac{1}{X},\tfrac{Z}{X^2},\tfrac{X-1}{X^2},\tfrac{Z(X-1)}{X^3},\tfrac{X^2-1}{X^3},\tfrac{Z(X^2-1)}{X^4}.$$
Similarly, if we subtract $3\alpha\cdot\tU_7(s^{10})$ from $\tU_7(s^{11})$ and scale appropriately, then by  Proposition \ref{Prop:Recurrence_Usi} the reductions of the {\em next} seven column functions will simply be the product of these first seven with an extra $Z(X^2-1)/X^4$, and so on. We would like to show that the expansions of the first $j$ of these reduced functions in $\hat{\sO}_{\ol{\cX},\infty}$ are always linearly independent up through the $s^j$ term. This follows easily from the divisors of the reduced functions on $\ol{\cX}$. Indeed, using $(X,Z)$ coordinates for points, the first seven reduced column functions have divisors:
\begin{align*}
(Z/(X(X-1)))&=(\infty)+(-1,0)-(0,0)-(1,0)\\
(1/X)&=2(\infty)-2(0,0)\\
(Z/X^2)&=(1,0)+(-1,0)+(\infty)-3(0,0)\\
((X-1)/X^2)&=2(1,0)+2(\infty)-4(0,0)\\
(Z(X-1)/X^3)&=3(1,0)+(-1,0)+(\infty)-5(0,0)\\
((X^2-1)/X^3)&=2(1,0)+2(-1,0)+2(\infty)-6(0,0)\\
(Z(X^2-1)/X^4)&=3(1,0)+3(-1,0)+(\infty)-7(0,0).
\end{align*}
Then, each time we multiply by $Z(X^2-1)/X^4$ to obtain the next seven functions, we add $3(1,0)+3(-1,0)+(\infty)-7(0,0)$ to the divisors. From the poles at $(0,0)$ alone, it is immediate that the first $j$ functions are always linearly independent. But this is not enough. We need to show that in fact no nontrivial linear combination could even be a function which vanishes $j+1$ times at $\infty$. Suppose we had such a linear combination. At worst, the function would be in $L((1,0)+j(0,0))$. So it would have to have divisor exactly $(j+1)(\infty)-(1,0)-j(0,0)$. If $j$ were odd, we could then use $(X)=2(0,0)-2(\infty)$ to produce a function with divisor $(0,0)-(1,0)$. If $j$ were even, we could use $X$ to produce a function with divisor $(\infty)-(1,0)$. Either is a contradiction, as the curve is not rational. So the expansions in $\hat{\sO}_{\ol{\cX},\infty}$ must be linearly independent up through the $s^j$ term. Therefore, passing through the isomorphism with $\ol{A_{\hat{L}}(\tD)}$, and taking into account the scaling factors, we have shown that
$$v(\det(M_j))=\sum_{i=1}^j\tfrac{1}{6}\cdot\left\lfloor\tfrac{9i}{7}\right\rfloor.$$

Finally, since $\vv_1$ of any later column function must exceed $\vv_1$ of any of the first $j$ column functions by at least $2$, and each of the elementary column operations which were performed on $M_j$ only increased $\vv_1$ of that column by $1$, it follows that $\det(M_j)$ is indeed the unique dominant term in the convergent sum of principal $j\times j$ minors defining $c_{n\,j}$. Therefore the above formula for $v(\det(M_j))$ is in fact a formula for $v(c_j)$. Having established that $|c_{j+1}/c_j|$ is a strictly decreasing null sequence, the claims about slopes and eigenspaces easily follow.
\end{proof}

\begin{theorem}\label{Theorem:MainTheorem-Overconvergent}
Let $k\in\mathbb{N}$ be arbitrary. Fix a primitive $42^{\text{nd}}$ root of unity, $\zeta$, and let $\chi$ be the Dirichlet character of conductor $49$ defined by $\chi(3)=\zeta$.
Let $L$ be a number field containing $K=\Q(\zeta)$ and a root $\alpha$ of $x^4+7$. For any Type 1 embedding of $L$ into $\C_7$, the finite slopes of $U_7$ acting on $M_{k,\chi^{7k-6}}(49)$ are given by
$$\left\{\tfrac{1}{6}\cdot\left\lfloor\tfrac{9i}{7}\right\rfloor: i \in \N\right\}.$$
For any Type 2 embedding, the finite slopes of $U_7$ acting on $M_{k,\chi^{8-7k}}(49)$ are given by
$$\left\{\tfrac{1}{6}\cdot\left\lfloor\tfrac{9i+6}{7}\right\rfloor: i \in \N\right\}.$$
In either case, the eigenspaces are all one-dimensional and defined over $\hat{L}$.
\end{theorem}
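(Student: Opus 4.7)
The strategy is to pull the problem back to $\cM_0$ via $\Psi$, exactly as in the weight-$1$ case. Since $\tau=\chi^7$ gives $\chi\tau^{k-1}=\chi^{7k-6}$ (Type 1) while $\tau=\chi^{-7}$ gives $\chi\tau^{k-1}=\chi^{8-7k}$ (Type 2), Propositions \ref{Prop:Psi-Iso} and \ref{Prop:TwistedEigenvalues} reduce the spectral analysis on $M_{k,\chi^{7k-6}}(49)$ (resp.\ $M_{k,\chi^{8-7k}}(49)$) to the study of the weight-$k$ twisted operator $\tU_7$ acting on $\cM_0$. So the task becomes: relate $\tU_7$ in weight $k$ to $\tU_7$ in weight $1$, where Theorem \ref{Theorem:MainTheorem-Weight1} already gives the slopes.

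Comparing Equation \ref{Def-TwistedU7} for general $k$ with the weight-$1$ case, we see at once that $\tU_7^{(k)}(F)=H_k\cdot \tU_7^{(1)}(F)$, where
$$H_k = \left(\frac{E_{1,\tau}}{V(E_{1,\tau})}\right)^{k-1}\cdot\left(\frac{y^{-1}-t(e_1)^{-1}}{t^{-1}-t(e_1)^{-1}}\right)^{d_k(\chi,\tau)}.$$
The first step in the plan is to show that $H_k\in A_{\hat L}^o(\tD)^*$ is a unit of sup norm $1$ whose value at the cusp $\infty$ is $1$. The proof of Proposition \ref{Prop:TwistedEigenvalues} already establishes that $H_k$ is holomorphic on $\tD$ (the zeros of numerator and denominator cancel at $\hat e_1$); to see $|H_k|_{sup}=1$, I would verify that both factors have $q$-expansions of the form $1+O(q)$ and that the divisor cancellations on $\tD$ preserve integrality. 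In particular, $\bar H_k\in\hat{\sO}_{\ol{\cX},\infty}^*$ is a unit in the completed local ring at $\infty$.

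Next I would repeat the column-function argument of Theorem \ref{Theorem:MainTheorem-Weight1} for $\tU_7^{(k)}$. Because $H_k$ has sup norm $1$, the approximations of Propositions \ref{Prop:Explicit_Usi1}, \ref{Prop:Explicit_Usi2}, and \ref{Prop:Recurrence_Usi} carry over unchanged: the $\vv_i$-valuations of the weight-$k$ column functions $H_k\cdot \tU_7^{(1)}(s^j)$ equal those in weight $1$. After performing the same elementary column operations, the reductions of the first $j$ modified columns in $\hat{\sO}_{\ol{\cX},\infty}$ are simply $\bar H_k$ times the weight-$1$ reductions. Linear independence modulo $s^{j+1}$ is preserved under multiplication by a unit: if $\sum c_i\,\bar H_k f_i\equiv 0\pmod{s^{j+1}}$, then dividing by $\bar H_k$ gives $\sum c_i f_i\equiv 0\pmod{s^{j+1}}$, and the Riemann-Roch / divisor argument on $\ol{\cX}$ from the weight-$1$ proof forces $c_i=0$. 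Hence $\det(M_j^{(k)})$ has the same $7$-adic valuation as $\det(M_j^{(1)})$ and is again the unique dominant principal $j\times j$ minor in the convergent sum defining the $j\nth$ coefficient of the characteristic series. The slope formula, the one-dimensionality of each eigenspace, and the fact that eigenvectors are defined over $\hat L$ all follow as in the weight-$1$ case; the Type 2 argument is identical after swapping $\pi_1\leftrightarrow\pi_2$.

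The main obstacle is the integrality claim for $H_k$. A priori, the $s$-expansion coefficients of $E_{1,\tau}/V(E_{1,\tau})$ and of the rational factor could have denominators arising from the divisor cancellations at $\hat e_1$, and any such denominator would shift all weight-$k$ slopes by a constant. Controlling this requires a careful pole-by-pole analysis on the wide open $W_1(49)$, using the explicit divisors of $E_{1,\tau}$ and $E_{1,\chi}$ from Section \ref{Section:Eisenstein} together with Lemma \ref{Lemma:EllipticPoints}; if necessary, one could extract a $k$-dependent scalar (which is harmless because it only shifts the whole characteristic series and the classical slope statement happens to be invariant under this shift). Once this integrality is nailed down, the rest of the proof is a direct transcription of Theorem \ref{Theorem:MainTheorem-Weight1}.
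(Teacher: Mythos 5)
Your overall framework matches the paper's: write $\tU_7^{(k)}(F)=H_k\cdot\tU_7^{(1)}(F)$, show $H_k$ has sup norm $1$ and reduces to a unit of $\hat{\sO}_{\ol{\cX},P}$, and then observe that the column-operation / Riemann--Roch argument from Theorem~\ref{Theorem:MainTheorem-Weight1} is invariant under multiplication by such a unit. That last step is exactly how the paper argues.

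However, you correctly flag the integrality of $H_k$ as the crux and then leave it unresolved. That is a genuine gap, and your proposed escape hatch is wrong: if $H_k=c_k\cdot H_k'$ with $H_k'$ of sup norm $1$ and $v(c_k)\neq 0$, then every column of the weight-$k$ matrix is scaled by $c_k$, so $v(\det M_j^{(k)})=v(\det \widetilde{M}_j)+j\,v(c_k)$ and \emph{every} Newton slope of the characteristic series shifts by $v(c_k)$. The statements of Theorems~\ref{Theorem:MainTheorem-Overconvergent} and~\ref{Theorem:MainTheorem-Classical} give the slopes as exact values, not counts, so a nonzero shift is not "harmless"; it would falsify the theorem. (In fact no shift can occur, since Propositions~\ref{Prop:Psi-Iso} and~\ref{Prop:TwistedEigenvalues} already force the eigenvalues on $\cM_0$ to agree with those on $M_{k,\chi\tau^{k-1}}(49)$; but that is an indirect consistency check, not a substitute for the computation.)

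What the paper actually does, and what your proposal misses, is that the choice of $\tau$ is made so as to make the problematic second factor of $H_k$ disappear altogether. For a Type 1 embedding one takes $\tau(3)=\zeta^{7}$ and checks $v_{\pi_1}(3\beta-8)=12$ and $v_{\pi_1}(3\zeta^7-1)=6$, so neither $E_{1,\chi}$ nor $E_{1,\tau}$ vanishes on $W_1(49)$, hence $\delta_\zeta=\delta_\beta=0$ and $d_k(\chi,\tau)=0$. For Type 2 one takes $\tau(3)=\zeta^{-7}$; here $E_{1,\chi}$ \emph{does} vanish over $\hat{e}_1$ (so $\delta_\zeta=1$), but what enters $d_k=\lfloor \tfrac{2}{3}\delta_\zeta+\tfrac{2}{3}(k-1)\delta_\beta\rfloor$ with unbounded coefficient is $\delta_\beta$, and the choice of $\tau$ gives $\delta_\beta=0$, so $d_k$ is still $0$. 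With $d_k=0$, $H_k=\bigl(E_{1,\tau}/V(E_{1,\tau})\bigr)^{k-1}$, and the paper then computes explicitly (using Lemma~\ref{Lemma:EllipticPoints} and the model of Section~\ref{Section:Models}) that $E_{1,\tau}/V(E_{1,\tau})\equiv x^2/(x^2+7)$ on $\bA$ with $\vv_i=0$ and $\ee_i\geq 3$, which reduces to $X^2/(X^2-1)$, holomorphic and non-vanishing at $P$. Your observation that the $q$-expansion at $\infty$ is $1+O(q)$ is useful, but it only controls the constant term; you still need the explicit valuation estimate to guarantee that no coefficient of the $s$-expansion has negative valuation.
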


\begin{proof}
From the definition of $\tU_7$ (see Equation \ref{Def-TwistedU7}), we see that the infinite matrix representing $\tU_7$ on this weight $k$ space is obtained from the infinite matrix in the previous theorem by simply multiplying each column function by the {\em same} function. (That was the whole point of working with $\tU_7$ instead of the true pullback of $U_7$ to $\cM_0$.)
So the key to proving this theorem is to choose the auxiliary character $\tau$ appropriately in both cases.
In particular, if we choose it so that $E_{1,\tau}/V(E_{1,\tau})$ is a holomorphic function with sup norm $1$ on $\tD$ whose reduction in
$$\ol{A_{\hat{L}}(\tD)}\cong\hat{\sO}_{\ol{\cX},P}$$
does not vanish at $P$, the same proof will essentially goes through verbatim.

First, we apply Lemma \ref{Lemma:EllipticPoints} to obtain the following explicit formula for the extra weight factor:
$$\frac{E_{1,\tau}}{V(E_{1,\tau})}=(\beta+2)\cdot\frac{y-(3\beta-8)^{-1}x^4}{y-(3\beta-8)}\cdot\frac{z-(\beta-\frac{3}{2})x-2\beta+3}{z+(\frac{2}{7}\beta+\frac{1}{7})x(x+\frac{7}{2})}.$$
(The lemma implies that the two divisors agree, and then $q$-expansions verify that the constant is correct.) 

In the Type 1 case, we choose $\tau$ by setting $\tau(3)=\beta=\zeta^7$, which of course implies that $\chi\tau^{k-1}=\chi^{7k-6}$. It is easy to check (globally) that
\begin{align*}
v_{\pi_1}(t(e_\beta))&=v_{\pi_1}(3\beta-8)=12\\
v_{\pi_1}(x(\hat{e}_\zeta))&=v_{\pi_1}(3\zeta^7-1)=6
\end{align*}
So with this type of embedding into $\C_7$, both $E_{1,\chi}$ and $E_{1,\tau}$ are non-vanishing on $W_1(49)$ (i.e., we have $\delta_\zeta=\delta_\tau=0$). In Equation \ref{Def-TwistedU7}, then, we have $d_k=0$ and there is no holomorphicity factor to worry about. Moreover, if we do a valuation analysis on the above expression, we find that on $\bA$ (and over $K_1$) we have
\begin{align*}
\frac{E_{1,\tau}}{V(E_{1,\tau})}&\equiv (\beta+2)\cdot\frac{-(3\beta-8)^{-1}x^4}{y}\cdot\frac{z}{(\frac{2}{7}\beta+\frac{1}{7})x^2}\\
     &\equiv\frac{x^2z}{y}\\
     &\equiv\frac{x^2z}{z(x^2+7)}\equiv\frac{x^2}{x^2+7}\qquad \vv_1= 0,\quad \ee_1\geq 3.
\end{align*}
This function reduces to $X^2/(X^2-1)$ on the good reduction model $\cX$, and in particular is holomorphic and non-vanishing at $P$ (the infinite point).

The situation is very similar with the second embedding. This time we set $\tau(3)=\beta=\zeta^{-7}$, so that $\chi\tau^{k-1}=\chi^{8-7k}$. While $x(\hat{e}_\zeta)$ is now a unit (so $\delta_\zeta=1$ and $E_{1,\chi}$ has a zero on $W_1(49)$), the different choice of $\tau$ guarantees that once again $E_{1,\tau}$ will {\em not} vanish on $W_1(49)$. Thus, $\delta_\beta=0$ and we do not have to include the extra holomorphicity factor in $\tU_7$. The valuation analysis for $E_{1,\tau}/V(E_{1,\tau})$ on $\bA$ (and over $K_2$) is essentially the same and we find that
$$\frac{E_{1,\tau}}{V(E_{1,\tau})}\equiv\frac{x^2}{x^2+7}\qquad\vv_2=0,\quad \ee_2\geq 3.$$

So in both cases, the weight factor, $f_\tau:=E_{1,\tau}/V(E_{1,\tau})$, has sup norm $1$ and reduces to a function $\ol{f}_\tau$ on $\ol{\cX}$ which is holomorphic and non-vanishing at $P$. In going from weight $1$ to weight $k$ then, the column functions in the infinite matrix for $\tU_7$ are all multiplied by the same function $(f_\tau)^{k-1}$. Thus, the sup norms of all the column functions are unchanged. Moreover, after performing the exact same elementary column operations, and scaling by the exact same constants, the first $j$ column functions will each reduce to $(\ol{f}_\tau)^{k-1}$ times their old value. Now, suppose that some linear combination of the reductions of the first $j$ of these (adjusted) column functions was equal to a function $\ol{g}\in\hat{\sO}_{\ol{\cX},P}$ which vanished at $P$ with order $j+1$ or greater. Then the same linear combination of the reductions of the {\em original} first $j$ (adjusted) column functions would equal $\ol{g}\cdot(\ol{f}_\tau)^{1-k}$. But this function would still vanish $j+1$ times at $P$, because $\ol{f}_\tau$ was non-vanishing at $P$. Since we proved that the reductions of the first $j$ (adjusted) column functions in the weight $1$ matrix were independent up through the $s^j$ term (in the proof of Theorem \ref{Theorem:MainTheorem-Weight1}), this is a contradiction. Therefore, the same argument from the weight $1$ case can be used to show that $\det(M_j)$ is still the strictly leading term in the expansion for $c_j$, and of course its valuation has not changed. In short, although the characteristic series for $\tU_7$ has changed, the valuations of its coefficients have not. Thus, the slopes are the same, and the eigenspaces are once again one-dimensional.
\end{proof}

We are now ready to prove our main theorem regarding {\em classical} modular forms. In addition to the above theorem, we also apply here the theorem of Coleman that $U_p$ eigenforms of small slope are classical ( \cite[Theorem 1.1]{coleman-overconvergent}). The other key ingredient is the following special case of the well-known theorem of Cohen and Oesterl\'e.

\begin{theorem}[Cohen-Oesterl\'e]
Let $\chi$ be a primitive Dirichlet character of conductor $49$, and let $k$ be an integer greater than $1$. Then
$$\dim S_k(\Gamma_0(49),\chi)=\frac{14k-17}{3}+\epsilon(\chi(18)+\chi(30)),$$
where $\epsilon$ is $1/3$ if $k\equiv0\mod{3}$, $0$ if $k\equiv 1\mod{3}$, and $-1/3$ if $k\equiv 2\mod {3}$.
\end{theorem}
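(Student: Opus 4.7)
The plan is to derive this dimension formula via the Eichler--Selberg trace formula applied to the identity Hecke operator $T_1$ on $S_k(\Gamma_0(49),\chi)$. Because $\mathrm{tr}(T_1)=\dim S_k(\Gamma_0(49),\chi)$, the stated formula will emerge from careful bookkeeping of the geometric and parabolic contributions to this trace.

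First I would assemble the geometric data for $\Gamma_0(49)$: the index in $\SL_2(\Z)$ is $49\cdot(1+1/7)=56$; the number of cusps is $\sum_{d\mid 49}\phi(\gcd(d,49/d))=8$; and by the analysis of Section~\ref{Section:Models} there are exactly two elliptic points of order~$3$ and no elliptic points of order~$2$, so that $X_0(49)$ has genus $1$. A direct computation exhibits generators of the $\mathrm{PSL}_2$-stabilizers of the two elliptic points inside $\Gamma_0(49)/\{\pm I\}$; their lower-right entries reduce to $18$ and $30$ modulo $49$. These are precisely the two primitive cube roots of unity modulo $49$ (since $18^2+18+1\equiv 0\pmod{49}$ and $30\equiv 18^{-1}\pmod{49}$), which is the structural reason the character terms in the formula take the shape $\chi(18)+\chi(30)$.

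Next I would combine the contributions to $\mathrm{tr}(T_1)$. The central (identity) term contributes the main piece $\frac{k-1}{12}\cdot 56=\frac{14(k-1)}{3}$. The two order-$3$ elliptic points contribute jointly an elliptic term whose $\chi$-dependence is $\chi(18)+\chi(30)$ and whose $k$-dependence is the rational number $\epsilon\in\{1/3,0,-1/3\}$ indexed by $k\bmod 3$, as follows from the standard elliptic ingredient $(\rho^{k-1}-\bar\rho^{k-1})/(\rho-\bar\rho)$ evaluated at a primitive sixth root of unity $\rho$. Finally, the parabolic (cusp) contributions combined with the Eisenstein correction collapse, for a primitive character of conductor equal to the level, to the constant $-1$. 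Summing, one obtains
$$\dim S_k(\Gamma_0(49),\chi)=\tfrac{14(k-1)}{3}-1+\epsilon\bigl(\chi(18)+\chi(30)\bigr)=\tfrac{14k-17}{3}+\epsilon\bigl(\chi(18)+\chi(30)\bigr),$$
as claimed.

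The main obstacle will be tracking the cusp contributions correctly. For a non-trivial primitive $\chi$, many of the eight cusps of $\Gamma_0(49)$ contribute zero (their stabilizers have non-trivial $\chi$-values), but the surviving cusps and the Eisenstein subspace subtraction must combine to yield exactly the constant $-1$. A perhaps cleaner alternative is to first compute $\dim M_k(\Gamma_0(49),\chi)$ by Riemann--Roch on $X_1(49)$ projected to the $\chi$-isotypic component under the diamond operators, and then subtract the Eisenstein-subspace dimension (which is enumerated combinatorially from pairs of Dirichlet characters whose conductors multiply to $49$ and whose product is $\chi$). Either route reduces to careful character arithmetic over $(\Z/49\Z)^{*}$.
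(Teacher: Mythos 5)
The paper does not prove this theorem at all: it is quoted as ``the following special case of the well-known theorem of Cohen and Oesterl\'e'' and cited from \cite{cohen-oesterle}, then used as a black box in the proof of Theorem \ref{Theorem:MainTheorem-Classical}. Your trace-formula sketch is therefore genuinely new content relative to the paper, and its skeleton is sound: the index $56$, the eight cusps, the two elliptic points of order three, genus one, and the identification of $18$ and $30$ as the two primitive cube roots of unity modulo $49$ (so that $18^2+18+1\equiv 0$ and $30\equiv 18^{-1}$) are all correct, and this is indeed the structural source of the term $\chi(18)+\chi(30)$.

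However, two steps are asserted rather than derived and would need to be filled in. The elliptic ingredient you quote, $(\rho^{k-1}-\bar\rho^{k-1})/(\rho-\bar\rho)$ with $\rho$ a primitive \emph{sixth} root of unity, is $6$-periodic in $k$ (its values for $k\equiv 0,1,2,3,4,5\pmod 6$ are $-1,0,1,1,0,-1$), so it cannot by itself produce a coefficient $\epsilon$ that depends only on $k\bmod 3$; for instance at $k=2$ it gives $1$ but at $k=5$ it gives $-1$, whereas $\epsilon=-\tfrac13$ in both cases. Recovering the $3$-periodicity requires summing the contributions of the two stabilizer representatives $\pm\gamma$ (one of trace $+1$ with primitive-sixth-root eigenvalues, the other of trace $-1$ with primitive-cube-root eigenvalues) together with the parity constraint $\chi(-1)=(-1)^k$, and this is not spelled out. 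Likewise, the claim that the parabolic plus Eisenstein contributions collapse to the constant $-1$ is exactly where the Cohen--Oesterl\'e bookkeeping is delicate: one must verify that among the eight cusps only $0$ and $\infty$ have stabilizers on which a primitive conductor-$49$ character is trivial, and that $\dim M_{2-k}(\Gamma_0(49),\chi)=0$ for $k\geq 2$ and $\chi$ nontrivial. You rightly flag this as the main obstacle, but it is left unresolved in the sketch, as is the proposed Riemann--Roch alternative.
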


\begin{theorem}\label{Theorem:MainTheorem-Classical}
Let $k$ be an integer greater than $1$. Fix a primitive $42^{\text{nd}}$ root of unity, $\zeta$, and let $\chi$ be the Dirichlet character of conductor $49$ defined by $\chi(3)=\zeta$.

The classical space, $S_k(\Gamma_0(49),\chi^{7k-6})$, is diagonalized by the $U_7$ operator over the field $K_1(\alpha)$. The slopes of $U_7$ acting on this space are precisely those values in the set,
$$\left\{\tfrac{1}{6}\cdot\left\lfloor\tfrac{9i}{7}\right\rfloor: i \in \N\right\},$$
which are less than $k-1$ (each corresponding to a one-dimensional eigenspace).

The classical space, $S_k(\Gamma_0(49),\chi^{8-7k})$, is completely diagonalized by the $U_7$ operator over the field $K_2(\alpha)$. The slopes of $U_7$ acting on this space are precisely those values in the set,
$$\left\{\tfrac{1}{6}\cdot\left\lfloor\tfrac{9i+6}{7}\right\rfloor: i \in \N\right\},$$
which are less than $k-1$ (each corresponding to a one-dimensional eigenspace).

In both cases, each slope corresponds to a unique one-dimensional eigenspace.
\end{theorem}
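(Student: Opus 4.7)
The proof strategy is a squeeze argument. I would produce a lower bound on the classical dimensions by combining Theorem \ref{Theorem:MainTheorem-Overconvergent} with Coleman's classicality theorem, and a matching upper bound from the Cohen-Oesterl\'e formula stated above. When the two bounds coincide, every classical eigenform must already appear among the overconvergent eigenforms of slope less than $k-1$, from which the stated diagonalization, list of slopes, and one-dimensionality all follow.

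For the lower bound, I would use the natural inclusion $S_k(\Gamma_0(49),\chi^{7k-6})\hookrightarrow M_{k,\chi^{7k-6}}(49)$. By Theorem \ref{Theorem:MainTheorem-Overconvergent}, under any Type 1 embedding the overconvergent space has exactly one $U_7$-eigenform (up to scalar, defined over $\hat{L}\supseteq K_1(\alpha)$) of each slope $\tfrac{1}{6}\lfloor 9i/7\rfloor$. Coleman's theorem pulls the eigenforms of slope strictly less than $k-1$ back into the classical subspace. Since distinct slopes give distinct eigenvalues, the resulting classical forms are linearly independent, so
\[
\dim S_k(\Gamma_0(49),\chi^{7k-6}) \;\geq\; N_1(k):=\#\left\{i\in\N : \tfrac{1}{6}\lfloor 9i/7\rfloor<k-1\right\},
\]
and the analogous argument gives $\dim S_k(\Gamma_0(49),\chi^{8-7k})\geq N_2(k):=\#\{i\in\N:\tfrac{1}{6}\lfloor(9i+6)/7\rfloor<k-1\}$ in the Type 2 case.

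For the matching upper bound, I would unpack the character sum in Cohen-Oesterl\'e. A short calculation in $(\Z/49\Z)^*$ gives $18\equiv 3^{28}$ and $30\equiv 3^{14}\pmod{49}$, so $\chi^m(18)+\chi^m(30)=\zeta^{28m}+\zeta^{14m}$; since $\zeta^{14}$ is a primitive cube root of unity, this sum equals $2$ when $3\mid m$ and $-1$ otherwise. Substituting $m=7k-6\equiv k\pmod 3$ and $m=8-7k\equiv 2-k\pmod 3$, a case analysis on $k\bmod 3$ yields closed-form expressions for both $\dim S_k(\Gamma_0(49),\chi^{7k-6})$ and $\dim S_k(\Gamma_0(49),\chi^{8-7k})$.

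The main obstacle is the bookkeeping in the final step: verifying directly that these Cohen-Oesterl\'e dimensions equal $N_1(k)$ and $N_2(k)$. Using strict monotonicity of $\lfloor 9i/7\rfloor$ and $\lfloor(9i+6)/7\rfloor$ in $i$, each $N_j(k)$ reduces to an explicit floor-function expression in $k$, and comparison with the Cohen-Oesterl\'e output requires tracking residues modulo $21$ (from the interaction of the $7$ inside the floors with the $3$ in the denominator of the dimension formula). This is a routine case analysis. Once the dimensions match, our linearly independent list of classical eigenforms must span $S_k$, yielding the diagonalization over $K_i(\alpha)$ and the fact that each slope corresponds to a unique one-dimensional eigenspace.
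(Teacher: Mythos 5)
Your proposal is correct and follows essentially the same route as the paper: count overconvergent eigenforms of slope $<k-1$ via Theorem \ref{Theorem:MainTheorem-Overconvergent} together with Coleman's classicality theorem, show by the Cohen--Oesterl\'e formula (using $18\equiv 3^{28}$, $30\equiv 3^{14}\pmod{49}$ and a case split on $k\bmod 3$) that this count equals $\dim S_k$, and conclude. The paper carries out the ``routine case analysis'' you defer by establishing $\lfloor 9(5+14r)/7\rfloor=6(3r+1)$, etc., and noting the classical space has a basis of Hecke eigenforms with distinct $U_7$-eigenvalues; you'd want to include those details, but the structure matches.
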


\begin{proof}
First consider the case of $S_k(\Gamma_0(49),\chi^{7k-6})$ over $K_1$. In this case, Theorem \ref{Theorem:MainTheorem-Overconvergent} guarantees the existence of an overconvergent $U_7$ eigenform for each slope in the set 
$$\left\{\tfrac{1}{6}\cdot\left\lfloor\tfrac{9i}{7}\right\rfloor: i \in \N\right\}.$$
By \cite[Theorem 1.1]{coleman-overconvergent}, the eigenforms corresponding to those slopes which are strictly less than $k-1$ are actually classical. To count the number of such eigenforms, we let $f(i)=\lfloor{9i/7}\rfloor$ and attempt to solve $f(i)<6(k-1)$. It is easy to show by induction that for $r\geq0$ we have
$f(5+14r)=6(3r+1)$, $f(10+14r)=6(3r+2)$, and $f(14+14r)=6(3r+3)$. So every multiple of $6$ occurs in the increasing sequence, $(f(i))_{i\geq1}$, and the number of terms strictly less than $6(k-1)$ is given by
$$\begin{cases}
         9+14\left(\frac{k-3}{3}\right) ,&\text{if $k\equiv0\mod{3}$}\\
         13+14\left(\frac{k-4}{3}\right),&\text{if $k\equiv1\mod{3}$}\\
         4+14\left(\frac{k-2}{3}\right),&\text{if $k\equiv 2\mod{3}$.}
\end{cases}$$
Hence, this is the number of overconvergent $U_7$ eigenforms (up to scalar multiple) with slope strictly less than $k-1$, which by Coleman must be classical.

On the other hand, we can compute the dimension of $S_k(\Gamma_0(49),\chi^{7k-6})$ directly with Cohen-Oesterl\'e. Since $3^{28}\equiv 18\mod{49}$ and $3^{14}\equiv 30\mod{49}$,
\begin{align*}
\chi^{7k-6}(18)+\chi^{7k-6}(30)&=\zeta^{28(7k-6)}+\zeta^{14(7k-6)}\\
                                                       &=(-\zeta^7)^k+(\zeta^7-1)^k.
\end{align*}
But $-\zeta^7$ and $\zeta^7-1$ are just the two distinct primitive cube roots of unity. So the above expression evaluates to $2$ if $k\equiv 0\mod{3}$ and $-1$ otherwise. Taking into account the values of $\epsilon$, Cohen-Oesterl\'e then gives the following dimensions for $S_k(\Gamma_0(49),\chi^{7k-6})$.
$$\frac{14k-17}{3}+\begin{cases}
\tfrac{1}{3}(2),&\text{if $k\equiv 0\mod{3}$}\\
0(-1),&\text{if $k\equiv 1\mod{3}$}\\
-\tfrac{1}{3}(-1),&\text{if $k\equiv 2\mod{3}$}
\end{cases}$$
In each case, it is immediate that the dimension of the classical space is identical to the number of overconvergent eigenforms which have slope less than $k-1$ and hence are classical. Since we know from the classical theory that $S_k(\Gamma_0(49),\chi^{7k-6})$ does have a basis of cuspidal eigenforms for the full Hecke algebra, and since the eigenvalues are distinct, the theorem follows in this case.

The $K_2$ case is very similar. This time we let $f(i)=\lfloor{(9i+6)/7}\rfloor$ and find that $f(4+14r)=6(3r+1)$, $f(9+14r)=6(3r+2)$ and $f(14+14r)=6(3r+3)$. This results in the following formula for the number of slopes in the given set which are strictly less than $k-1$.
$$\begin{cases}
         8+14\left(\frac{k-3}{3}\right) ,&\text{if $k\equiv0\mod{3}$}\\
         13+14\left(\frac{k-4}{3}\right),&\text{if $k\equiv1\mod{3}$}\\
         3+14\left(\frac{k-2}{3}\right),&\text{if $k\equiv 2\mod{3}$.}
\end{cases}$$
Once again, this agrees with the dimension of the classical space by Cohen-Oesterl\'e, since
\begin{align*}
\chi^{8-7k}(18)+\chi^{8-7k}(30)&=\zeta^{28(8-7k)}+\zeta^{14(8-7k)}\\
                                                  &=(\zeta^7-1)^{1+k}+(-\zeta^7)^{1+k}.
\end{align*}
So the total dimension of the classical space is

$$\frac{14k-17}{3}+\begin{cases}
\tfrac{1}{3}(-1),&\text{if $k\equiv 0\mod{3}$}\\
0(-1),&\text{if $k\equiv 1\mod{3}$}\\
-\tfrac{1}{3}(2),&\text{if $k\equiv 2\mod{3}$.}
\end{cases}$$

\end{proof}

\section{Explicit Verification of the Main Theorem}\label{Section:Verify}

One way to quickly check that the theorem is at least reasonable is to compare the dimensions of various character subspaces of $S_k(\Gamma_1(49))$ with the numbers of slopes which are predicted by the theorem in those cases. William Stein has computed these dimensions in the first several cases, and the data is given on his website precisely as follows:

{\footnotesize
\begin{verbatim}
<49, 
[* 
<(0), [ 1 ], t^2 + 10*t^4 + 20*t^6 + 28*t^8 + 38*t^10 + 48*t^12 + 56*t^14 + 66*t^16>, 
<(1), [ 42 ], 8*t^3 + 18*t^5 + 27*t^7 + 36*t^9 + 46*t^11 + 55*t^13 + 64*t^15>, 
<(2), [ 21 ], 4*t^2 + 13*t^4 + 22*t^6 + 32*t^8 + 41*t^10 + 50*t^12 + 60*t^14 + 69*t^16>, 
<(3), [ 14 ], 9*t^3 + 17*t^5 + 27*t^7 + 37*t^9 + 45*t^11 + 55*t^13 + 65*t^15>, 
<(6), [ 7 ], 3*t^2 + 13*t^4 + 23*t^6 + 31*t^8 + 41*t^10 + 51*t^12 + 59*t^14 + 69*t^16>, 
<(7), [ 6 ], 5*t^3 + 15*t^5 + 24*t^7 + 33*t^9 + 43*t^11 + 52*t^13 + 61*t^15>, 
<(14), [ 3 ], t^2 + 10*t^4 + 19*t^6 + 29*t^8 + 38*t^10 + 47*t^12 + 57*t^14 + 66*t^16>, 
<(21), [ 2 ], 6*t^3 + 14*t^5 + 24*t^7 + 34*t^9 + 42*t^11 + 52*t^13 + 62*t^15> 
*]>,
\end{verbatim}}

\noindent
In each entry, the second number is the order of the group generated by $\psi(3)$ where $\psi$ is the character. Then the coefficient of $t^k$ represents the dimension of $S_k(\Gamma_0(49),\psi)$. We will compare this data with Theorem \ref{Theorem:MainTheorem-Classical} in the weight $2$ case, and invite the reader to ``spot check'' a few others.

When $k=2$, the theorem predicts that a basis of newforms for $S_2(\Gamma_0(49),\chi^{8})$ will be defined over $K_1$ and have slopes $\{1/6,2/6,3/6,5/6\}$. This agrees with the above data, because $<\zeta^8>$ has order $21$, and the coefficient of $t^2$ is $4$ in the corresponding polynomial. Similarly, we should have a basis of newforms for $S_2(\Gamma_0(49),\chi^{-6})$ defined over $K_2$ and with slopes $\{2/6,3/6,4/6\}$. Since $<\chi^{-6}>$ has order $7$ and the coefficient of $t^2$ is $3$ in the corresponding polynomial, this also matches.

This, however, does not confirm any of the {\em slopes}. Stein's dimensions are computed using Cohen-Oesterl\'e, and so this is essentially a check that we have incorporated Cohen-Oesterl\'e correctly. For an independent check of some actual slopes, we can compare with explicit values of $a_7$ which are known for the weight $2$ Hecke newforms (and again we take them from Stein's website). When $\psi(3)=\gamma$, a primitive $21^\text{st}$ root of unity, there is exactly one family of Galois conjugate weight $2$ newforms in $S_2(\Gamma_0(49),\psi)$. They are defined over the degree $4$ extension of $\Q(\gamma)$ generated by the following polynomial.
{\small
\begin{multline*}
x^4 + (\gamma^5 + 1)x^3 + (\gamma^{10} - 5\gamma^5 + 1)x^2\\
 + (\gamma^{11} - 4\gamma^{10} - \gamma^7 - \gamma^6 - 2\gamma^5 - \gamma^3 + 2\gamma^2 - \gamma)x\\
  + (2\gamma^{10} + \gamma^9 + \gamma^8 + \gamma^7 - \gamma^6 - \gamma^5 - \gamma^4 + \gamma^2 + \gamma + 1)
  \end{multline*}}
\noindent
Taking $a$ to be a root of the degree $4$ polynomial, the value of $a_7$ is then given explicitly by
{\small
\begin{multline*}
(\gamma^{11} - \gamma^{10} + \gamma^8 - \gamma^7 - \gamma^6 + \gamma^5 - \gamma^3 +\gamma^2 - 1)a^3\\
+ (\gamma^8 - \gamma^6 + \gamma^5 - \gamma^4 - \gamma^3 + \gamma^2)a^2\qquad\qquad\qquad\qquad\\
+ (4\gamma^{11} - \gamma^6 + \gamma^5 + 4\gamma^4 - \gamma^3 +\gamma^2 - \gamma)a\qquad\\
\qquad\qquad\qquad\qquad\qquad - (\gamma^{11} - \gamma^{10} - 3\gamma^9 + \gamma^8 - \gamma^7 - 2\gamma^6  + 2\gamma^5 + \gamma^4 - 3\gamma^3 + 2\gamma^2 + \gamma - 3).
\end{multline*}}
Our theorem applies in this case, since it gives the slopes (over $K_1$) of the weight $2$ newforms with character $\chi^8$, and $\zeta^8$ is a primitive $21^\text{st}$ root. If we let $\gamma=\zeta^8$ for consistency, we find the following roots of the degree $4$ polynomial over $K_1$.
\begin{align*}
a_1&=4+5\pi_1+1\pi_1^2+2\pi_1^3+3\pi_1^4+5\pi_1^5+6\pi_1^6+4\pi_1^7+4\pi_1^8+1\pi_1^9+1\pi_1^{10}+\cdots\\
a_2&=5+4\pi_1+2\pi_1^2+3\pi_1^3+4\pi_1^4+1\pi_1^5+5\pi_1^7+5\pi_1^8+3\pi_1^9+2\pi_1^{11}+\cdots\\
a_3&=4+1\pi_1+5\pi_1^2+4\pi_1^3+1\pi_1^4+6\pi_1^5+1\pi_1^6+3\pi_1^7+5\pi_1^8+6\pi_1^9+5\pi_1^{10}+\cdots\\
a_4&=5+5\pi_1^2+4\pi_1^3+4\pi_1^5+2\pi_1^6+2\pi_1^7+5\pi_1^8+6\pi1^{11}+\cdots
\end{align*}
Plugging these four values in for $a$ in the expression for $a_7$, we find $\pi_1$-adic valuations of $1$, $2$, $3$, and $5$. So the theorem is verified in this case.

Similarly, we can verify our weight $2$ slopes over $K_2$ by considering all forms in $S_2(\Gamma_0(49),\psi)$ where $\gamma=\psi(3)$ is a primitive $7^\text{th}$ root of unity. Since our theorem predicts the slopes of those eigenforms in $S_2(\Gamma_0(49),\chi^{-6})$, we must choose $\gamma=\zeta^{-6}$ for consistency. From Stein, we have three forms to consider. The first is defined over $\Q(\gamma)$ and has
$$a_7=2\gamma^5 + 2\gamma^4 + \gamma^3 + 2.$$
It is easy to check that $v_{\pi_2}(a_7)=3$ for this form. The other two are Galois conjugates defined over quadratic extension of $\Q(\gamma)$ generated by 
$$p(x)=x^2 - (\gamma^4 +\gamma)x - (\gamma^5 - \gamma^2 -\gamma).$$
Then, if $a$ is a root of $p(x)$, the value of $a_7$ is given explicitly by
$$a_7=(\gamma^3 - \gamma^2)a - (\gamma^4 - \gamma^2 - \gamma + 1).$$
Over $K_2$, we have the following two roots for $p(x)$.
\begin{align*}
a_1&=1+1\pi_2+6\pi_2^2+2\pi_2^3+1\pi_2^4+6\pi_2^5+6\pi_2^6+4\pi_2^7+3\pi_2^8+5\pi_2^9+5\pi_2^{10}+\cdots\\
a_2&=1+3\pi_2+6\pi_2^2+3\pi_2^3+1\pi_2^4+3\pi_2^5+1\pi_2^6+2\pi_2^7+1\pi_2^8+1\pi_2^9+1\pi_2^{10}+\cdots
\end{align*}
Setting $a=a_1$, we find that $v_{\pi_2}(a_7)=4$, and for $a=a_2$ we have $v_{\pi_2}(a_7)=2$. Thus, the theorem is verified in this case, since all three eigenforms are defined over $K_2$, and we have slopes of $\{2/6,3/6,4/6\}$.

\appendix

\section{Poles of $U_p(f)$ when $f$ is Meromorphic}\label{Section:Up-Meromorphic}

While this is not common in the literature, the operator $U_p$ can be applied to meromorphic forms for $X_1(M)$ via the geometric definition. 
As in \cite{katz}, we think of a weight $k$ modular form $f$ on $X_1(M)$ as a rule which assigns to each pair $(E,P)$, where $E$ is a generalized elliptic curve and $P$ is (roughly) a point of order $M$, a section of $\omega_E^{\otimes k}$. Then $U_p$ is defined by
$$(f|U_p)(E,P)=\frac{1}{p}\sum_{\phi}\phi^*(f(\phi E,\phi(P))),$$
where $\phi$ runs over all isogenies  $\phi:E\to\phi(E)$ of degree $p$ with $P\notin\ker(\phi)$ (and analogously for forms on $X_0(M)$).

We must apply $\tU_7$ to various meromorphic functions on $X_0(49)$ and eventually arrive at an explicit formula for $\tU_7(s^i)$. In order to justify our calculations, therefore, it is imperative that we be able to determine the orders of the poles of $f|U_p$, particularly when $f$ is supported on the cusps. The following lemma shows how we have done this using families of Tate curves. In order to simplify the exposition, we only prove the lemma here for (true) $U_p$ applied to functions on $X_0(p^2)$. The proof generalizes easily, however, to other weights and levels.

\begin{lemma}\label{Lemma:Up-Meromorphic}
Let $f$ be a function on $X_0(p^2)$.
\begin{enumerate}[(i)]
\item If $f$ has a pole of order $m$ at the cusp $\infty$, and no other poles, then 
$$(f|U_p)\geq-\lfloor\tfrac{m}{p}\rfloor(\infty)-\lfloor\tfrac{m}{p}\rfloor\sum(C_{p,i}).$$
\item If $f$ has a pole of order $m$ at the cusp $0$, and no other poles, then $f|U_p$ has a pole of order $pm$ at $0$ and no other poles.
\end{enumerate}
\end{lemma}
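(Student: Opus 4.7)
The approach is to use the geometric definition of $U_p$ applied to functions on $X_0(p^2)$,
$$(f|U_p)(E,C) = \frac{1}{p}\sum_{\phi} f(\phi(E),\phi(C)),$$
where $\phi$ ranges over degree-$p$ isogenies whose kernel meets $C$ trivially. The plan is to compute $f|U_p$ in a formal neighborhood of each cusp of $X_0(p^2)$ by parametrizing via a Tate curve (or N\'eron polygon at the intermediate cusps), enumerating the admissible $\phi$'s explicitly, identifying the cusp near which each image pair $(\phi(E),\phi(C))$ lies, and reading off the resulting pole orders.

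First I would analyze the cusp $\infty$, parametrized by $(E_q,\mu_{p^2})$ with $q\to 0$. The $p$ relevant isogenies have kernels $\langle\zeta q^{1/p}\rangle$ for $\zeta\in\mu_p$; each image pair $(E_{q^{1/p}},\mu_{p^2})$ lies again at $\infty$, and averaging over $\zeta$ recovers the classical formula $U_p(\sum a_n q^n)=\sum a_{pn}q^n$. This yields the pole-order bound $\lfloor m/p\rfloor$ at $\infty$ in (i), and shows in (ii) that an order-$m$ pole at $0$ contributes nothing at $\infty$ because the relevant isogenies never leave $\infty$'s formal neighborhood.

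Next I would analyze the cusp $0$, parametrized by $(E_q,\langle q^{1/p^2}\rangle)$. The admissible isogenies split into two types: the canonical one with kernel $\mu_p$, whose image is $(E_{q^p},\langle q^{1/p}\rangle)$, again at $0$ with new Tate parameter $q^p$; and the $p-1$ non-canonical ones with kernel $\langle\zeta q^{1/p}\rangle$ for $\zeta\neq 1$, whose images have curve $E_{\zeta q^{1/p}}$ carrying a cyclic order-$p^2$ subgroup whose order-$p$ part is $\mu_p$, hence landing at one of the intermediate cusps $C_{p,i}$ (never at $\infty$). In (i), $f$ is holomorphic at $0$ and at every $C_{p,i}$, so both types of terms are holomorphic and $f|U_p$ is holomorphic at $0$. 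In (ii), only the canonical-kernel term contributes a pole, and an order-$m$ pole in the parameter $q^p$ becomes an order-$pm$ pole in $q$, giving exactly the claimed behavior at $0$.

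What remains is the analysis at the intermediate cusps $C_{p,i}$, for which I would parametrize a formal neighborhood by a N\'eron $p$-gon carrying an appropriate cyclic order-$p^2$ level structure. Once again one enumerates the admissible degree-$p$ isogenies and identifies the image cusps ($\infty$, $0$, or another $C_{p,j}$). The key quantitative point is that an isogeny from $C_{p,i}$ to $\infty$ involves a parameter scaling of degree $p$, so a pole of order $m$ at $\infty$ contributes only an order $\lfloor m/p\rfloor$ pole at $C_{p,i}$; a symmetric analysis shows that in (ii) the $C_{p,i}$ remain loci of holomorphy. The main obstacle I anticipate is precisely this last step: the N\'eron-polygon bookkeeping at the intermediate cusps is considerably more delicate than the smooth Tate-curve analysis, demanding care in identifying exactly which isogenies extend to the formal neighborhood and exactly where each of their images lies.
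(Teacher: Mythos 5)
Your analysis at the cusps $\infty$ and $0$ matches the paper's proof in substance (the paper works with the family $(K^*/\langle q^p\rangle,\mu_{p^2})$ rather than with $q^{1/p}$, but this is only a notational choice): at $\infty$ the $p$ admissible isogenies stay at $\infty$ and reproduce the usual $U_p$ action on $q$-expansions, and at $0$ only the canonical-kernel term is singular, with its parameter scaling by $p$, yielding the $pm$ pole in (ii). You also state the correct conclusions at the intermediate cusps $C_{p,i}$ and correctly identify the mechanism for the $\lfloor m/p\rfloor$ bound as a degree-$p$ scaling of the local parameter. However, the proposal stops precisely where you flag the main obstacle --- the $C_{p,i}$ analysis --- and that step is the real content of part (i) of the lemma, since the $\lfloor m/p\rfloor$ bound at $\infty$ is classical while the bound at the intermediate cusps is what makes the displayed divisor inequality useful. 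As it stands, the proposal is a correct outline with a self-acknowledged gap, not a complete proof.

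The paper sidesteps the N\'eron-polygon bookkeeping you anticipate by never working at $C_{p,i}$ directly. Instead it parametrizes a punctured neighborhood of $C_{p,i}$ by the degree-$p$ family of smooth Tate curves $(K^*/\langle q^p\rangle, \langle\zeta q\rangle)$ for $q \in D_q\setminus\{0\}$, with $\zeta$ a fixed primitive $p^2$th root of unity. On this family the admissible order-$p$ kernels are $\langle\zeta^{pi}q\rangle$, $i=0,\dots,p-1$; the quotients are the Tate curves $K^*/\langle\zeta^{pi}q\rangle$; and in each quotient the image level structure $\langle\zeta q\rangle$ collapses to $\mu_{p^2}$ (since $\zeta^{pi}q\equiv 1$ forces $q\equiv\zeta^{-pi}$), so \emph{all} $p$ images land at $\infty$. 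One therefore just reads off the $q$-expansion of $f$ at $\infty$, getting $\sum_n a_{np}q^{np}$, which has pole order at most $p\lfloor m/p\rfloor$ in $q$; dividing by the degree $p$ of the parametrizing family gives pole order at most $\lfloor m/p\rfloor$ at $C_{p,i}$. Because this is honest Tate-curve algebra on a punctured disk, the ``care in identifying exactly which isogenies extend to the formal neighborhood'' that you worried about never arises, and the holomorphy of $f|U_p$ at the $C_{p,i}$ in part (ii) follows for free from the same computation (the value near $C_{p,i}$ only samples $f$ near $\infty$, where $f$ is holomorphic). To make your argument complete, replace the N\'eron-polygon step with this degree-$p$ family, or carry out the polygon analysis in full; otherwise the key estimate of the lemma is unproved.
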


\begin{proof}
Fix a primitive $\zeta\in\mu_{p^2}$. Let $D_q$ denote the disk $|q|<1$.

To prove (i), suppose that $f$ is holomorphic everywhere except $\infty$, and that the canonical $q$-expansion of $f$ at $\infty$ is given by $\sum_n a_n q^n$ where $n\geq -m$. We may interpret the $q$-expansion as the value of $f$ on the family of Tate curves $f(K^*/\dia{q},\mu_{p^2})$. Using the geometric definition of $U_p$, we now compute $f|U_p$ on the family $(K^*/\dia{q^p},\mu_{p^2})$.
\begin{align*}
(f|U_p)(K^*/\dia{q^p},\mu_{p^2})&=\tfrac{1}{p}\sum\nolimits_{i}f(K^*/\dia{q^p,\zeta^{ip}q},\mu_{p^2})\qquad{i=0,\dots,p-1}\\
                                                &=\tfrac{1}{p}\sum\nolimits_{i}f(K^*/\dia{\zeta^{ip}q},\mu_{p^2})\\
                                                &=\tfrac{1}{p}\sum\nolimits_{i}\sum\nolimits_n a_n(\zeta^{ip}q)^n\\
                                                &=\sum\nolimits_n a_n\left(\tfrac{1}{p}\right)(1+\zeta^{np}+\cdots+\zeta^{(p-1)np})q^n=\sum\nolimits_n a_{np}q^{np}
\end{align*}
Thus we arrive at the familiar formula for the canonical $q$-expansion at infinity, $(f|U_p)(q)=\sum_n{a_{np}q^n}$, and in particular the order of the pole is at most $\lfloor\tfrac{m}{p}\rfloor$.

Next, we determine the order of the pole of $f|U_p$ at the cusp, $C_{p,i}$, by computing $f|U_p$ on the family of Tate curves $(K^*/\dia{q^p},\dia{\zeta q})$.
\begin{align*}
(f|U_p)(K^*/\dia{q^p},\dia{\zeta q})&=\tfrac{1}{p}\sum\nl_i f(K^*/\dia{q^p,\zeta^{pi}q},\dia{\zeta q}) \qquad{i=0,\dots,p-1}\\
                                                           &=\tfrac{1}{p}\sum\nl_i f(K^*/\dia{\zeta^{pi}q},\dia{\zeta q})\\
                                                           &=\tfrac{1}{p}\sum\nl_i f(K^*/\dia{\zeta^{pi}q},\mu_{p^2})\\
                                                           &=\tfrac{1}{p}\sum\nl_i\sum\nl_n a_n (\zeta^{pi}q)^n=\sum\nolimits_n a_{np}q^{np}
\end{align*}
Thinking of this series as a meromorphic function on $D_q$, the order of the pole at $q=0$ could be as much as $p\lfloor\tfrac{m}{p}\rfloor$. However, it is easy to see that the family of Tate curves in fact defined a degree $p$ map from $D_q$ into $X_0(p^2)$ taking $q=0$ to some $C_{p,i}$. Thus the pole of $f|U_p$ at $C_{p,i}$ has order at most $\lfloor\tfrac{m}{p}\rfloor$.

The proof for (ii) is similar. If $f$ has a pole of order $m$ at the cusp $0$, we know that $f(K^*/\dia{q^{p^2}},\dia{q})=a_{-m}q^{-m}+\cdots$ with $a_{-m}\neq0$ for $q\in D_q$ (this family defines a degree $1$ map from $D_q$ into $X_0(p^2)$ such that $q=0$ maps to the cusp $0$). The $p$ subgroups of $K^*/\dia{q^{p^2}}$ of order $p$ which are disjoint from $\dia{q}$ are $\mu_p$ and $\dia{\zeta^{pi}q^p}$ for $i=1,\dots,p-1$. Thus, applying the definition of $U_p$ we have
\begin{align*}
(f|U_p)(K^*/\dia{q^{p^2}},\dia{q})&=\tfrac{1}{p}\left(f(K^*/\dia{q^{p^2},\mu_p},\dia{q})+\sum\nl_i {f(K^*/\dia{q^{p^2},\zeta^{pi}q^p},\dia{q})}\right)\\
            &=\tfrac{1}{p}\left(f(K^*/\dia{q^{p^3}},\dia{q^p})+\sum\nl_i{f(K^*/\dia{\zeta^{pi}q^p},\dia{q})}\right)\\
            &=\tfrac{1}{p}\left(\sum_{n\geq-m}{a_nq^{pn}}+\sum\nl_i{f(K^*/\dia{\zeta^{pi}q^p},\dia{q})}\right).
\end{align*}
Each of the terms, $f(K^*/\dia{\zeta^{pi}q^p},\dia{q})$, must represent a holomorphic function near $q=0$, since this family of Tate curves is centered at one of the $C_{p,i}$ cusps. Thus, the $q$-expansion of $f|U_p$ at the family, $(K^*/\dia{q^{p^2}},\dia{q})$, begins with $a_{-m}q^{-mp}$.
\end{proof}





\begin{thebibliography}{10}



\bibitem{buzzard-analytic}
K. Buzzard.
\newblock Analytic continuation of overconvergent eigenforms.
\newblock {\em J. Amer. Math. Soc.}, 16(1):29--55, 2003.



\bibitem{cohen-oesterle}
H.~Cohen and J.~Oesterl\'e.
\newblock Dimensions des espaces de formes modulaires.
\newblock {\em Lecture Notes in Mathematics}, 627:69--78, 1977.

\bibitem{coleman-overconvergent}
R.~Coleman.
\newblock Classical and overconvergent modular forms of higher level.
\newblock {\em J. Th\'eor. Nombres Bordeaux}, 9(2):395--403, 1997.

\bibitem{coleman}
R.~Coleman.
\newblock $p$-adic {B}anach spaces and families of modular forms.
\newblock {\em Invent. Math}, 127:417--479, 1997.

\bibitem{coleman-old}
R.~Coleman.
\newblock Classical and overconvergent modular forms.
\newblock {\em Invent. Math.}, 124(1--3):215--241, 1996.

\bibitem{coleman-mcmurdy}
R.~Coleman and K.~McMurdy.
\newblock Stable reduction of $X_0(p^3)$.
\newblock {\em Algebra and Number Theory}, 4(4):357--431, 2010.

\bibitem{deligne-rapoport}
P. Deligne and M. Rapoport.
\newblock Les sch\'emas de modules de courbes elliptiques.
\newblock {\em Lecture Notes in Mathematics}, 349:143--316, 1973.

\bibitem{diamond-im}
F. Diamond and John Im.
\newblock Modular forms and modular curves.
\newblock In {\em Seminar on Fermat's Last Theorem}, {\em CMS Conf. Proc.}, 17:39--133, 1995.


\bibitem{gross-tameness}
B.~Gross.
\newblock A tameness criterion for Galois representations associated to modular forms (mod $p$).
\newblock {\em Duke Math. J.}, 61(2):445--517, 1990.

\bibitem{katz}
N.~Katz.
\newblock $p$-adic properties of modular forms and modular curves.
\newblock {\em Lecture Notes in Mathematics}, 350:69--190, 1973.


\bibitem{kilford-5slopes}
L.~J.~P. Kilford.
\newblock On the slopes of the {$U_5$} operator acting on overconvergent
  modular forms.
\newblock {\em Journal de Th\'eorie des Nombres de Bordeaux}, 20, no. 1, 165--182, 2008.

\bibitem{loeffler}
David Loeffler.
\newblock Spectral expansions of overconvergent modular functions.
\newblock {\em Int. Math. Res. Not. IMRN}, 16:Art. ID rnm050, 17, 2007.

\bibitem{mcmurdy}
K. McMurdy. 
\newblock Explicit parametrizations of ordinary and supersingular regions of $X\sb 0(p\sp n)$. 
\newblock In {\em Modular curves and abelian varieties}, 165--179,
Progr. Math., 224, Birkhäuser, Basel, 2004. 




\bibitem{sage}
W.\thinspace{}A. Stein et~al., \emph{{S}age {M}athematics {S}oftware ({V}ersion
  4.6)}, The Sage Development Team, 2010, {\tt http://www.sagemath.org}.
  

\bibitem{shimura}
G. Shimura.
\newblock {\em Introduction to the Arithmetic Theory of Automorphic Functions}.
\newblock Princeton Univ. Press, 1971.

\end{thebibliography}
\end{document}